      \theoremstyle{plain}
      \newtheorem{assumption}{Assumption}
\crefname{hypothesis}{Hypothesis}{Hypotheses}
\Crefname{ALC@unique}{Line}{Lines}
\colorlet{texcscolor}{blue!50!black}
\colorlet{texemcolor}{red!70!black}
\colorlet{texpreamble}{red!70!black}
\colorlet{codebackground}{black!25!white!25}
\newcommand{\iprod}[2]{\left \langle #1, #2 \right \rangle }
\lstdefinestyle{siamlatex}{%
  style=tcblatex,
  texcsstyle=*\color{texcscolor},
  texcsstyle=[2]\color{texemcolor},
  keywordstyle=[2]\color{texemcolor},
  moretexcs={cref,Cref,maketitle,mathcal,text,headers,email,url},
}
\DeclareTotalTCBox{\code}{ v O{} }
{ %fontupper=\ttfamily\color{texemcolor},
  fontupper=\ttfamily\color{black},
  nobeforeafter,
  tcbox raise base,
  colback=codebackground,colframe=white,
  top=0pt,bottom=0pt,left=0mm,right=0mm,
  leftrule=0pt,rightrule=0pt,toprule=0mm,bottomrule=0mm,
  boxsep=0.5mm,
  #2}{#1}
\patchcmd\newpage{\vfil}{}{}{}
	\title{A Stochastic Trust-Region Framework for Policy Optimization\thanks{Submitted to the editors DATE.
			\funding{Research supported in part by the NSFC grants 11831002 and 11421101 and by Beijing Academy  of Artificial Intelligence (BAAI).}}}
	\author{Mingming Zhao\thanks{Beijing International Center for Mathematical Research, Peking University, CHINA (\email{mmz102@pku.edu.cn}).}
	\and Yongfeng Li\thanks{Beijing International Center for Mathematical Research, Peking University, CHINA (\email{yongfengli@pku.edu.cn}).}
		\and Zaiwen Wen\thanks{Beijing International Center for Mathematical Research, Peking University,CHINA}(\email{wenzw@pku.eu.cn}).}
\begin{document}
	\maketitle
	
\begin{abstract}
In this paper, we study a few challenging theoretical and numerical issues on the well known trust region policy optimization for deep reinforcement learning. The goal is to find a policy that maximizes the total expected reward when the agent acts according to the policy. The trust region subproblem is constructed with a surrogate function coherent to the total expected reward and a general distance constraint around the latest policy. We solve the subproblem using a preconditioned stochastic gradient method with a line search scheme to ensure that each step promotes the model function and stays in the trust region. To overcome the bias caused by sampling to the function estimations under the random settings, we add the empirical standard deviation of the total expected reward to the predicted increase in a ratio in order to update the trust region radius and decide whether the trial point is accepted. Moreover, for a Gaussian policy which is commonly used for continuous action space, the maximization with respect to the mean and covariance is performed separately to control the entropy loss. Our theoretical analysis shows that the deterministic version of the proposed algorithm tends to generate a monotonic improvement of the total expected reward and the global convergence is guaranteed under moderate assumptions. Comparisons with the state-of-the-art methods demonstrate the effectiveness and robustness of our method over robotic controls and game playings from OpenAI Gym. 
\end{abstract}

\begin{keywords}
Deep reinforcement learning; stochastic trust region method; policy optimization; global convergence; entropy control
\end{keywords}

\begin{AMS}
49L20, 90C15, 90C26, 90C40, 93E20
\end{AMS}
\section{Introduction}\label{sec:intro}	
In reinforcement learning, the agent starts from an initial state and interacts with the environment by executing an action from some policy iteratively. At each time step, the environment transforms the current state into the next state with respect to the action selected by the agent and gives back a reward to the agent to evaluate how good the action is, then the agent makes a new action for the next interaction based on the feedback. Repeating the above transition dynamics generates a trajectory where stores the visited states, actions and rewards. During the interactions, the transition probability and the reward function are totally determined by the environment, but the intrinsic mechanism may be mysterious. The policy characterizes the distribution of actions at each possible state. The problem is how to design a policy for the agent to maximize the total expected reward along a trajectory induced by the policy. The state-of-the-art model-free methods for reinforcement learning \cite{sutton1998reinforcement,recht2018tour} can be divided into policy-based and value-based methods. Policy-based methods directly learn or try to approximate the optimal policy by policy improvement and policy evaluation alternatively. They generate a map, i.e., a distribution from states to actions, which can be stochastic or deterministic. That is, they can be applied to both continuous and discrete action spaces. While in value-based methods, the goal is approximating the solution of the optimal Bellman equation based upon the temporal difference learning \cite{sutton1998reinforcement}. They learn a value function defined on the state-action pairs to estimate the maximal expected return of the action taken in the state. Then at each sate, the optimal policy based on the value function predicts a single action by maximizing the values.

The recent progress of deep neural networks \cite{lecun2015deep} provides many scalable and reliable learning based approaches \cite{duan2016benchmarking,haarnoja2017reinforcement,lillicrap2015continuous,schulman2015trust,schulman2017proximal} for solving large and complex real-world problems in reinforcement learning. The curse of dimensionality is conquered by expressing the value and/or policy function with a deep neural network from high-dimensional or limited sensory inputs. The deepening expedites the evolution of end-to-end reinforcement learning, also referred as deep reinforcement learning. As a representative and illuminative algorithm in deep value-based methods, deep Q-learning (DQN) \cite{mnih2015human} has succeeded in many discrete domains such as playing Atari games. The learned agent arrives at a comparable level to that of a professional human games player. They construct a Q-network to receive the raw pictures as inputs, and optimize the weights by minimizing the Bellman residual.  DQN can be viewed as a deep value iteration method directly, and some independent improvements including their combinations have been summarized in \cite{hessel2018rainbow}. The success of DQN and its variants has a restriction on the type of the problem, specifically, the maximal operator in the objective function makes the optimization to be less reliable in continuous and/or large action space. By representing the greedy action selection with a policy network, the deep deterministic policy gradient (DDPG) method \cite{lillicrap2015continuous} successfully extends the algorithmic idea of DQN into the continuous action space. The value network imitates the training in DQN and the policy network is updated by maximizing the estimated values. The two delayed deep deterministic (TD3) policy gradient algorithm \cite{fujimoto2018addressing} substantially improves DDPG by building double deep Q-networks to avoid overestimation in value estimates and delaying the policy updates to reduce the per-update error in DDPG.

Different from the optimization models based on value functions, policy-based algorithms also concentrate on optimizing the policy iteratively. In the policy improvement step, the actor updates the policy by optimizing an appropriate objective function using gradient-based methods. Policy evaluation creates a critic, i.e., a value function, to assess the policy by minimizing the Bellman error associated with the policy, which provides a basis for policy improvement. Thus the policy-based methods are usually classified as actor-critic methods. As the optimizations are practically based on the observations, the generalized advantage estimators (GAE) \cite{schulman2015high} are mostly considered for the bias-variance tradeoff and numerical stability. The discrepancy among the state-of-the-art policy-based methods mainly locates in the actor part, specifically, the surrogate function used for improving the policy. The trust region policy optimization (TRPO) \cite{schulman2015trust} generalizes the proof the policy improvement bound in \cite{kakade2002approximately} into general stochastic policies and proposes a trust region model for policy update. The model function is a local approximation of the total expected reward and the Kullback–Leibler (KL) divergence between two policies is considered as a distance constraint. The subproblem under parameterization is highly nonlinear and nonconvex rather than a typical quadratic model as in \cite{bandeira2014convergence,chen2018stochastic,wang2019stochastic} because the policy is parameterized by a neural network and the trust region constraint is replaced by a distance function of two policies. In order to develop a practical algorithm, the subproblem is approximately solved in one step by a linearization of the model function and a second-order approximation of the constraint. The proximal policy optimization (PPO) algorithm \cite{schulman2017proximal} constructs a surrogate function by modifying the model function in TRPO with a clipped probability ratio, which controls the policy update from one iteration to the next one as the KL divergence constraint does. The surrogate function is maximized by the stochastic gradient methods. Some related developments can be found in \cite{chen2018adaptive,kurutach2018model,achiam2017constrained}.

In this paper, we use a similar model as in TRPO and develop a model-free on-policy algorithm within the stochastic trust region framework for policy optimization. In TRPO,  the subproblem is solved essentially by only one natural gradient step and the size of the trust region radius is always fixed. However, in our method, the subproblem is inexactly solved by a sequence of increasing and feasible directions, and the solving process is terminated until the model function has a sufficient increase and the constraint is satisfied. Additionally, we update the trust region radius adaptively by linearly dependent on the gradient of the model and the adjustment is guided by the coherence between the surrogate function and the total expected reward.  We add the empirical standard deviation of the total expected reward to the predicted increase in a ratio to alleviate the bias caused by sampling in function estimations. This revision can be interpreted as releasing the coherence into a related confidence interval. For problems with continuous action spaces, where the Gaussian policy is commonly used, the mean of the policy tends to be sharply updated toward the best observation accompanied with an unexpected drop in variance. To avoid from the premature convergence, we separate the optimization with respect to the mean and variance into two independent parts. This alternating strategy slows down the decline of the entropy to encourage exploration which is crucial in deep reinforcement learning. In the convergence analysis, we provide a rigorous theoretical foundation for the stochastic trust region framework. With respect to the unparameterized policies, we show that the proposed trust region algorithm generates a sequence of monotonically rising policies. Moreover, specifying the total variation (TV) distance in trust region constraint, we can construct a feasible solution for the subproblem such that the model function has a sufficient improvement. We prove that the ratio can be bounded from below and it goes to one as the trust region radius goes to zero. Therefore, the total expected reward converges to the optimal value under some mild assumptions. Then we extend the results with policy parameterization. Specifically, we consider the Gaussian polices and the KL divergence in continuous action space. The alternating strategy in mean and covariance updates ensures a sufficient improvement in the model function, and the ratio is bounded below. Furthermore, under some continuous assumptions of the objective function, the mean of the policy is proved to converge to a stationary point where the covariance is also assumed to converge. The numerical performance of our method can be significantly better than that of the state-of-the-art methods including TRPO and PPO under certain typical environments.

This paper is organized as follows. In Section \ref{Preliminaries}, we introduce basic preliminaries in deep reinforcement learning, and review some related algorithms. The trust region framework for policy optimization is proposed in Section \ref{Our Method}. In Section \ref{Theoretical Analysis}, we give a convergence analysis of our algorithmic framework for problems with parameterized and unparameterized policies, respectively. The alternating strategy for the Gaussian policies is included in the proof. To handle practical problems, we present a stochastic trust region algorithm for policy optimization named STRO in Section \ref{Empirical Algorithm}. Finally, extensive numerical experiments are presented in Section \ref{Experiments} to demonstrate the effectiveness and robustness of our algorithm in MuJoCo \cite{todorov2012mujoco} and Atari games \cite{bellemare2013arcade}.

\section{Preliminaries}\label{Preliminaries}
\subsection{Notations}\label{notation}
 We consider the reinforcement learning problem defined by an infinite-horizon discounted Markov decision process (MDP). The MDP is denoted as a tuple $(\mathcal{S},\mathcal{A},P,r,\rho_0, \gamma)$, where $\mathcal{S}$ and $\mathcal{A}$ are known as the state and action spaces,  respectively. $P: \mathcal{S}\times \mathcal{A}\times \mathcal{S}\rightarrow \mathbf{R}$ is the transition probability distribution, $r: \mathcal{S} \times \mathcal{A} \rightarrow \mathbf{R}$ is the bounded reward function, $\rho_0: \mathcal{S}\rightarrow \mathbf{R}$ is the distribution of the initial state $s_0$, and $\gamma \in (0,1)$ is the discount factor.

In this paper, we consider the stochastic policy, which maps the state into a distribution over action space:
\begin{equation*}
\pi(a|s):\ \mathcal{S}\times\mathcal{A}\rightarrow[0,1],\ \sum_{a\in\mathcal{A}}\pi(a|s)=1,\forall s\in\mathcal{S}.
\end{equation*}
The total expected reward is the expectation of the cumulative discounted reward of a trajectory induced by the policy $\pi$:
\begin{equation}\label{etapi-form}
\eta(\pi)=\mathbb{E}_{\pi}\left[\sum_{t=0}^\infty\gamma^tr(s_t, a_t)\right],
\end{equation}
where $s_0 \sim \rho_0, a_t \sim \pi(\cdot|s_t), s_{t+1}\sim P(\cdot|s_t,a_t)$. Moreover, we introduce the unnormalized discounted visitation frequency with respect to $\pi$ as:
\begin{equation*}
\rho_{\pi}(s)=\sum_{t=0}^{\infty}\gamma^{t}\mathbb{P}(s_{t}=s|\pi).
\end{equation*}
We take the following standard definitions of the action value function $Q_\pi$ and the state value function $V_\pi$:
		\begin{equation*}
		\begin{split}
	Q_\pi(s,a) &= \mathbb{E}_{\pi}\left[\sum_{l=0}^\infty\gamma^lr(s_{t+l}, a_{t+l})\Big|s_0=s,a_0=a\right], \\
	V_\pi(s) &= \mathbb{E}_{\pi}\left[\sum_{l=0}^\infty\gamma^lr(s_{t+l}, a_{t+l})\Big|s_0=s\right].
	\end{split}
	\end{equation*}
%     where $a_t \sim \pi(\cdot|s_t), s_{t+1}\sim P(\cdot|s_t,a_t)$.
 They satisfy the recursive relationships:
     \begin{equation*}
     Q_\pi(s,a) =  r(s,a)+\gamma\mathbb{E}_{P(\cdot|s,a)}\left[V_{\pi}(s')\right],\ V_{\pi}(s) = E_{\pi(\cdot|s)}\left[Q_{\pi}(s,a)\right].
     \end{equation*}
The advantage of an action $a$ over a given state $s$ is defined as 
\begin{equation*}
A_\pi(s,a) = Q_\pi(s,a)-V_\pi(s).
\end{equation*}

\subsection{The optimization model}	
Generally, we consider the optimization
\begin{equation}\label{etapi}
\max_{\pi\in\Pi}\quad\eta(\pi),
\end{equation}
where $\Pi=\{\pi|\sum\limits_{a\in\mathcal{A}}\pi(a|s)=1,\pi(a|s)\geq0,\forall a\in\mathcal{A}, s\in\mathcal{S}\}$. For any fixed policy $\pi$, the total expected reward of another policy $\tilde{\pi}$ can be expressed in terms of the expectation of advantage function $A_{\pi}(s,a)$ over policy $\tilde{\pi}$:
\begin{equation}
\eta(\tilde{\pi})=\eta(\pi)+\sum_{s}\rho_{\tilde{\pi}}(s)\sum_{a}\tilde{\pi}(a|s)A_{\pi}(s,a).
\end{equation}
The derivation can be referred in \cite{kakade2002approximately,schulman2015trust}. The complex dependency of $\rho_{\tilde{\pi}}$ on $\tilde{\pi}$ motivates a popular surrogate function \cite{schulman2015trust} by approximating $\rho_{\tilde{\pi}}$ with $\rho_{\pi}$:
\begin{equation}\label{L}
\begin{split}
L_{\pi}(\tilde{\pi})&=\eta(\pi)+\sum_{s}\rho_{\pi}(s)\sum_{a}\tilde{\pi}(a|s)A_{\pi}(s,a)\\
&=\eta(\pi)+\mathbb{E}_{\rho_{\pi},\pi}\left[\frac{\tilde{\pi}(a|s)}{\pi(a|s)}A_{\pi}(s,a)\right].
\end{split}
\end{equation}
The second equality follows from importance sampling, where the sampling distribution is $\pi$. Apparently, $L_{\pi}$ is a linear function in the policy space and intersects $\eta$ at $\pi$. The maximizer of $L_{\pi}$ over $\tilde{\pi}$ is exactly the greedy policy update at $\pi$ in policy iteration, since for each state $s$, the action with the maximal advantage value is assigned with probability one:
\begin{equation*}
\tilde{\pi}^{*}=\mathop{\arg\max}\limits_{\tilde{\pi}\in\Pi}L_{\pi}(\tilde{\pi}),\ \tilde{\pi}^{*}(a|s)=\begin{cases}1,&\mbox{}\  a=\mathop{\arg\max}\limits_{a'}A_{\pi}(s,a'),\\
0,&\mbox{}\  \mathrm{otherwise.}\end{cases}
\end{equation*}
Besides, the evaluation of $L_{\pi}$ at any policy $\tilde{\pi}$ only requires the expectation over $\pi$, which is much cheaper than that of $\eta$.

In deep reinforcement learning, to address the high dimensionality and complexity in real-world tasks, the policy $\pi$ is usually parameterized by a set of variables, for example, a differentiable neural network weighted by $\theta$. The constraint on the policy, i.e., $\pi\in\Pi$, is guaranteed by the parameterization. For simplicity, we can overload our previous notations to associate with $\theta$ rather than $\pi$, e.g., $\eta(\theta):=\eta(\pi_{\theta})$, $Q_\theta(s_t,a_t):=Q_{\pi_\theta}(s_t,a_t)$, $L_{\theta}(\tilde{\theta}):=L_{\pi}(\tilde{\pi})$ and $\rho_{\theta}(s):=\rho_{\pi_{\theta}}(s)$. The goal is maximizing the total expected reward \eqref{etapi-form} in the parameterized policy space:
\begin{equation}\label{eq: origionalobjective}
\max_{\theta}\quad\eta(\theta).
\end{equation}
The optimal solution $\theta^{*}$ corresponds to the so-called optimal policy $\pi_{\theta^{*}}$, which equivalently implies that for any $s\in\mathcal{S},\ a\in\mathcal{A}$:
\begin{equation*}
Q_{\theta^{*}}(s,a)\geq Q_{\theta}(s,a),\ V_{\theta^{*}}(s)\geq V_{\theta}(s),\mathrm{\forall\theta}.
\end{equation*} 
In general, for any MDP with a differentiable policy $\pi_{\theta}$, the policy gradient \cite{sutton2000policy} is formulated as
	\begin{equation}\label{gradient}
	\nabla \eta(\theta) = \sum_{s}\rho_{\theta}(s)\sum_{a} \nabla\pi_\theta(a|s)A_{\theta}(s,a)=\mathbb{E}_{\rho_{\theta},\pi_{\theta}}\bigg[ \nabla \log\pi_\theta(a|s)A_{\theta}(s,a) \bigg].
	\end{equation}
Intuitively, $L_{\theta}(\tilde{\theta})$ matches $\eta(\tilde{\theta})$ up to the first-order accuracy at $\theta$ in the parameterized policy space:
\begin{equation}\label{1stmatching}
L_{\theta}(\theta)=\eta(\theta),\ \nabla L_{\theta}(\theta) = \nabla\eta(\theta).
\end{equation}
%Conclusively, the formulation and properties of $L$ present an opportunity to approximately evaluate any neighbor of $\pi_{\theta}$ without extra simulations except with $\pi_{\theta}$ itself. In addition to the local approximation, a relaxation of Kakade\cite{kakade2002approximately} leads the following lower bound\cite{schulman2015trust}:
%\begin{align}\label{lowerbound}
%\begin{split}
%\eta(\tilde{\theta})&\geq L_{\theta}(\tilde{\theta})-\frac{4\epsilon\gamma}{1-\gamma^{2}}\left[\max_{s\in\mathcal{S}}D_{KL}(\pi_{\theta(\cdot|s)}\|\pi_{\tilde{\theta}(\cdot|s)})\right],
%\end{split}
%\end{align}
%where $\epsilon\in R^{+}$, and the Kullback–Leibler (KL) divergence of two distributions $p$ and $q$ is defined as $D_{KL}(p\|q)=\sum_{x}p(x)\log\frac{p(x)}{q(x)}$. The equality holds when $\tilde{\theta}=\theta$. Furthermore, maximizing the right hand side results in policy improvement, and this motivates some popular policy-based methods.

\subsection{Related algorithms}
Policy gradient type methods \cite{kakade2002natural,sutton1998reinforcement,sutton2000policy} directly take a stochastic gradient version of \eqref{gradient} to update the policy in an incremental manner:
\begin{equation}\label{pgupdate}
\theta_{k+1} =\theta_k +\alpha M(\theta_k)\nabla\eta(\theta_k),
\end{equation}
where $\alpha>0$ is the step size, and $M(\theta_k)$ is a preconditioning matrix that may associate with $\theta_k$. The policy gradient type algorithms distinguish from each other with the choice of the preconditioning matrix $M(\theta_k)$ \cite{furmston2016approximate}. The vanilla policy gradient (VPG) \cite{sutton2000policy} method using $M(\theta_k)=I$ often suffers poor-scaled issues. The natural policy gradient (NPG) algorithm \cite{kakade2002natural} brings the natural gradient techniques originated from the neural networks and takes $M(\theta_k)$ as the inverse of the Fisher information matrix (FIM) of the policy $\pi_{\theta_k}$, that is,
\begin{equation}\label{}
M(\theta_k)^{-1} = \mathbb{E}_{\rho_{\theta_k},\pi_{\theta_k}}\left[\nabla\log\pi_{\theta_k}(s,a)\nabla\log\pi_{\theta_k}(s,a)^T\right].
\end{equation}
It can be showed as defining a matrix norm on the parameterized policy space. Generally, a well-tuned update rule for the step size $\alpha$ in \eqref{pgupdate} is crucial for the numerical stability in both VPG and NPG.
%
%In TRPO \cite{schulman2015trust}, their motivation comes from a generalization of the policy improvement bound in Kakade \cite{kakade2002approximately}. Namely, they build the surrogate function $L_{\theta}$ with a penalization of the maximal KL divergence as a lower bound of the total expected reward for general stochastic policies. They aim at iteratively optimizing the lower bound to guarantee the improvement of the objective function \cite{furmston2016approximate}. However the penalty coefficient suggested by the theory is too large to get a sufficient update. For practical computation, they relax the penalty into a constraint and use the averaged KL divergence bound instead of the large number of constraints. At each iteration, they construct a trust region subproblem
%\begin{equation}
%\max_{\theta} \quad  L_{\theta_{k}}(\theta), \quad \mathrm{s.t.}\ \mathbb{E}_{s \sim \rho_{\theta_{k}}} [D_{KL}(\pi_{\theta_{k}}(\cdot |s)||\pi_{\theta}(\cdot |s))]\leq \delta,
%\label{eq: trpoobjectivewithconstr}
%\end{equation}
%where $\delta$ is fixed constant. is approximately solved using the conjugate gradient algorithm \cite{Nocedal1999Numerical,sun2006optimization}, after making a linear approximation to the objective function and a quadratic approximation to the constraint. The resulting update direction is collinear with that of NPG. Therefore it can be viewed as a natural policy gradient algorithm with self-adaptive step size. 

In TRPO, they propose to optimize a surrogate function coherent to the total expected reward with a constraint on the KL divergence of two policies, which controls how far the policy is allowed to update. At each iteration, they construct a trust region subproblem
\begin{equation}
\max_{\theta} \quad  L_{\theta_{k}}(\theta), \quad \mathrm{s.t.}\ \mathbb{E}_{s \sim \rho_{\theta_{k}}} [D_{KL}(\pi_{\theta_{k}}(\cdot |s)||\pi_{\theta}(\cdot |s))]\leq \delta,
\label{eq: trpoobjectivewithconst}
\end{equation}
where $\delta$ is a fixed constant and the KL divergence $D_{KL}(p||q)=\sum\limits_xp(x)\log\frac{p(x)}{q(s)}$. To develop a practical algorithm for solving the subproblem, they take a linear approximation of the model function and a second-order approximation of the constraint. Essentially, TRPO solves the quadratic constrained optimization:
\begin{equation}\label{trpoactual}
\max_{\theta} \quad  \nabla L_{\theta_{k}}(\theta_k)^{T}(\theta-\theta_k), \quad \mathrm{s.t.}\ \frac{1}{2}(\theta-\theta_k)^TH(\theta_k)(\theta-\theta_k)\leq  \delta,
\end{equation}
where $H(\theta_k)$ is the FIM of the policy $\pi_{\theta_k}$ as well as the second-order approximation of the KL divergence at $\theta_k$. Then the solution of \eqref{trpoactual} obtained by the conjugate gradient method \cite{Nocedal1999Numerical,sun2006optimization} is taken as an approximate solution of \eqref{eq: trpoobjectivewithconst}. Obviously, the induced update direction is collinear with that of NPG, therefore TRPO can be viewed as a natural policy gradient algorithm with self-adaptive step size.

PPO optimizes the model function in \eqref{eq: trpoobjectivewithconst} with point-wise value clipping to penalize a large policy update. They construct a clipped surrogate function at each iteration:
\begin{equation*}
L^{clip}_{k}(\theta) = \mathrm{E}_{\rho_{\theta_k},\pi_{\theta_{k}}}\left[\min (r_{k}(s,a)A_{\theta_{k}}(s,a),\mathrm{clip}(r_{k}(s,a),1-\epsilon, 1+\epsilon)A_{\theta_{k}}(s,a))    \right],
\end{equation*}
where $r_{k}(s,a) = \frac{\pi_{\theta}(a|s)}{\pi_{\theta_{k}}(a|s)}$ is the probability ratio, $\mathrm{clip}(x,r_1,r_2)=\min(\max(x,r_1),r_2)$, and $\epsilon$ is a hyper-parameter. It is also a local estimate of the total expected reward, and has stringent control of the policy update. The maximization of $L^{clip}_{k}(\theta)$ is approximately solved by multiple epochs of the stochastic gradient methods.
\label{eq: trpoobjectivewithconstr}

\section{A Trust Region Method for Policy Optimization}
As we showed in the last section, the function $L_{\theta_k}(\theta)$ gives a good local approximation to $\eta(\theta)$ around $\pi_{\theta_k}$ as in \eqref{1stmatching} and it has much lower costs than $\eta$ in evaluation and derivation. These advantages motivate an underlying exploration of $L_{\theta_k}$ to extract potential information for policy improvement. At the $k$-th iteration, we construct the trust region model as:
\begin{equation}\label{trposub}
\max_{\theta} \quad L_{\theta_{k}}(\theta), \quad \mathrm{s.t.}\ \mathbb{E}_{s \sim \rho_{\theta_{k}}}\left[D(\pi_{\theta_{k}}(\cdot|s),\pi_{\theta}(\cdot|s)) \right]\leq \delta_k,
\end{equation}
where $D$ is a general metric function of two distributions, such as the KL divergence and TV distance. The subproblem \eqref{eq: trpoobjectivewithconst} in TRPO can be viewed as a special case of ours by taking the KL divergence as the distance metric function. We embed the subproblem \eqref{trposub} into a general trust region method, so as to monitor the acceptance of the trial point and to adjust the trust region radius $\delta_k$. 

\subsection{Algorithmic Framework}\label{Our Method}

We now state the trust region framework for policy improvement with a random initialization. At the $k$-th iteration, the algorithm approximately solves the subproblem \eqref{trposub} to obtain a trial point $\tilde{\theta}_{k+1}$, then we resolve two issues: 1) whether to accept $\tilde{\theta}_{k+1}$ as $\theta_{k+1}$; 2) how to update the trust region radius $\delta_{k}$ adaptively. The canonical trust region framework leads us to compute a ratio to evaluate the agreement between the objective function and the surrogate function at $\tilde{\theta}_{k+1}$:
 \begin{equation}\label{ratio}
r_{k}= \frac{\eta(\tilde{\theta}_{k+1})-\eta(\theta_{k})}{L_{\theta_k}(\tilde{\theta}_{k+1})-L_{\theta_k}(\theta_{k})}.
\end{equation}
The trial point $\tilde{\theta}_{k+1}$ is accepted if $r_{k}$ is greater than some positive constant $\beta_0$ and it is called a successful iteration, i.e., $\theta_{k+1}=\tilde{\theta}_{k+1}$, otherwise, the iteration is unsuccessful and $\theta_{k+1}=\theta_{k}$:
\begin{equation}\label{updatetheta}
\theta_{k+1}=\begin{cases}\tilde{\theta}_{k+1},&\mbox{}\  r_{k}\geq\beta_0,\\
\theta_{k},&\mbox{}\ \mathrm{otherwise.}
\end{cases}
\end{equation}
The adjustment of the trust region radius $\delta_{k}$ is based on the ratio as:
\begin{equation}\label{updateC}
\delta_{k+1}=\begin{cases}\gamma_1\delta_{k},&\mbox{}\  r_{k}\geq\beta_1,\\
\gamma_2\delta_{k},&\mbox{}\  r_{k}\in[\beta_0,\beta_1),\\
\gamma_3\delta_{k},&\mbox{}\ \mathrm{otherwise,}
\end{cases}
\end{equation}
where $0<\beta_0<\beta_1$, and $0<\gamma_3<\gamma_2\leq1<\gamma_1$. These tuning parameters control the accuracy of the model by determining how aggressively the trust region radius is updated when an iteration is successful or not. The training process is summarized in Algorithm \ref{algo-expectation-form}.

\begin{algorithm}[tbp]
		\caption{A Trust Region Optimization Framework}
		\label{algo-expectation-form}
		\begin{algorithmic}[1]
			\REQUIRE Set $\theta_{0}$, $\delta_0$, $k=0$
			\WHILE{$\mathrm{stopping\ criterion\ not\ met}$}	
					
			\STATE solve \eqref{trposub} to obtain a trail point $\tilde{\theta}_{k+1}$;
			
		\STATE compute the ratio $r_{k}$ via \eqref{ratio};
	\STATE update $\theta_{k+1}$ using \eqref{updatetheta};		
			\STATE update $\delta_{k+1}$ using \eqref{updateC};
			\STATE $k=k+1$; 
						\ENDWHILE
\end{algorithmic}
	\end{algorithm}

\subsection{Theoretical Analysis}\label{Theoretical Analysis}
We next establish the convergence of the trust region framework for policy optimization. The analysis starts from the unparameterized case, then moves to the case that the policy is parameterized as Gaussian distributions. During the discussion, the states set $\mathcal{S}$ is supposed to be finite and the initial state distribution $\rho_0$ is assumed that 
\begin{equation*}	
\rho_{0}(s)>0,\forall s\in \mathcal{S}.
\end{equation*}
Consequently, for any policy $\pi$, it holds
 \begin{equation}\label{rhocond}
\rho_{\pi}(s) \geq \rho_0(s)  > 0,\forall s\in \mathcal{S}.
\end{equation} 
\subsubsection{Unparameterized Policy}
 In this part, we consider a MDP with a finite set of actions $\mathcal{A}$, and focus on the policy $\pi$ itself, i.e., considering the case that $\theta = \{\pi(a|s): \forall s \in \mathcal{S},a \in \mathcal{A}\}$, which means $\pi_{\theta}=\pi$. Note that our analysis can be similarly extended to continuous action spaces (Eucliean space). Typically, we consider the following subproblem
\begin{equation}\label{trposubanaly}
\max_{\pi}\quad L_{\pi_k}(\pi),\ \mathrm{s.t.\ } \mathbb{E}_{s \sim \rho_{\pi_{k}}} [D_{TV}(\pi_{k}(\cdot |s)||\pi(\cdot |s))] \leq \delta_k,
\end{equation}
where $\mathrm{D}_{TV}(p||q)=\frac{1}{2}\sum\limits_{x}|p(x)-q(x)|$ is the total variation distance between two distributions $p$ and $q$. Usually, a close-form solution of \eqref{trposubanaly} is unknown.
%Namely, we need to solve the following problems:
%\begin{itemize}s
%\item The convergence of \eqref{trposubanaly};
%\item The guaranteed performance increase of $L$ by solving the subproblem \eqref{trposubanaly};
%\item The convergence of $\eta(\theta)$ under the trust region framework;
%\end{itemize}
We next define the so-called policy advantage to introduce the theoretical results that follow. 
%We next define the so-called policy advantage to describe the optimality condition for the problem \eqref{etapi}. 
\begin{definition}[Policy Advantage]
The policy advantage $\mathbb{A}_{\pi}(\pi')$ of a policy $\pi'$ with respect to a policy $\pi$ is defined by
	\begin{equation*}
	\mathbb{A}_{\pi}(\pi') = \mathrm{E}_{s\sim\rho_{\pi}}\left[\mathrm{E}_{a\sim\pi'(\cdot|s)}\left[A_{\pi}(s,a)\right]\right].
	\end{equation*}
\end{definition}
From the definition, it is straightforward to obtain $L_{\pi_k}(\pi)=\eta(\pi_k)+\mathbb{A}_{\pi_{k}}(\pi)$. 
%In the following discussion, we denote $\mathbb{A}^*_{\pi}=\max_{\pi'}\mathbb{A}_{\pi}(\pi')$. 
In the next lemma, we give an optimality condition of the RL problems \eqref{etapi}.
\begin{lemma} \label{etaoptimalcond}
The policy $\pi$ is an optimal solution for \eqref{etapi} if and only if 
\begin{equation}\label{Astaropt}
    \mathbb{A}^*_{\pi} \overset{\triangle}{=}\max_{\pi'}\ \mathbb{A}_{\pi}(\pi')=\sum\limits_{s}\rho_{\pi}(s)\max\limits_{\pi'(\cdot|s)}\sum\limits_{a}\pi'(a|s)A_{\pi}(s,a)=0,
\end{equation}
 i.e., $\pi\in\mathrm{argmax}_{\pi'}\ \mathbb{A}_{\pi}(\pi')$.
\end{lemma}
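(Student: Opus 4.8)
The plan is to derive both implications from the performance difference identity $\eta(\tilde\pi)=\eta(\pi)+\sum_s\rho_{\tilde\pi}(s)\sum_a\tilde\pi(a|s)A_\pi(s,a)$ together with the positivity condition \eqref{rhocond}, exploiting that $\mathcal S$ and $\mathcal A$ are finite so all maxima are attained. First I would record two elementary facts. Since $V_\pi(s)=\mathbb{E}_{a\sim\pi(\cdot|s)}[Q_\pi(s,a)]$, the self-advantage vanishes, i.e. $\sum_a\pi(a|s)A_\pi(s,a)=0$ for every $s$, so that $\mathbb{A}_\pi(\pi)=0$; taking $\pi'=\pi$ in the definition of $\mathbb{A}_\pi^*$ therefore gives $\mathbb{A}_\pi^*\geq 0$ for every policy $\pi$. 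Consequently the scalar equation $\mathbb{A}_\pi^*=0$ is equivalent to $\pi\in\arg\max_{\pi'}\mathbb{A}_\pi(\pi')$, which settles the ``i.e.'' clause. Second, because $\mathbb{A}_\pi(\pi')=\sum_s\rho_\pi(s)\sum_a\pi'(a|s)A_\pi(s,a)$ is separable over states and each $\pi'(\cdot|s)$ ranges freely over the simplex, the maximization splits state-by-state, and each inner maximum of a linear functional over the simplex is attained by placing all mass on a maximal action, giving the middle expression of \eqref{Astaropt} with $\max_{\pi'(\cdot|s)}\sum_a\pi'(a|s)A_\pi(s,a)=\max_a A_\pi(s,a)\geq 0$.

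For the ``if'' direction, suppose $\mathbb{A}_\pi^*=0$. Since $\rho_\pi(s)\geq\rho_0(s)>0$ by \eqref{rhocond} and every term $\max_a A_\pi(s,a)$ is nonnegative, the vanishing of the weighted sum forces $\max_a A_\pi(s,a)=0$ for each $s$, hence $A_\pi(s,a)\leq 0$ for all $(s,a)$. Plugging an arbitrary competitor $\tilde\pi$ into the performance difference identity, every summand $\rho_{\tilde\pi}(s)\tilde\pi(a|s)A_\pi(s,a)$ is nonpositive, so $\eta(\tilde\pi)\leq\eta(\pi)$ and $\pi$ solves \eqref{etapi}.

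For the ``only if'' direction I would argue by contraposition. Assume $\mathbb{A}_\pi^*>0$ (it cannot be negative by the first step); then $\max_a A_\pi(\bar s,a)>0$ at some state $\bar s$. I would exhibit the greedy policy $\pi^g$ concentrating on $\arg\max_a A_\pi(s,a)$ at every state, for which $\sum_a\pi^g(a|s)A_\pi(s,a)=\max_a A_\pi(s,a)$. The performance difference identity then yields $\eta(\pi^g)-\eta(\pi)=\sum_s\rho_{\pi^g}(s)\max_a A_\pi(s,a)$, a sum of nonnegative terms that is strictly positive at $\bar s$ because $\rho_{\pi^g}(\bar s)\geq\rho_0(\bar s)>0$. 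Thus $\eta(\pi^g)>\eta(\pi)$, contradicting optimality, so $\mathbb{A}_\pi^*=0$.

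The main obstacle, and the place where the structural assumption does the real work, is this ``only if'' step: converting a merely pointwise advantage gain into a genuine increase of the scalar objective $\eta$. The danger is that the improving state $\bar s$ might carry zero visitation weight under the perturbed policy, in which case the identity would register no gain. This is precisely ruled out by \eqref{rhocond}: because $\rho_0(s)>0$ everywhere, the greedy policy still visits $\bar s$ with positive discounted frequency, so the local improvement survives in the expectation. I would take care to specify the greedy construction (including a fixed tie-breaking rule so that $\pi^g\in\Pi$ is well defined) and to emphasize that the identity is applied with the visitation measure of the \emph{new} policy, which is exactly why \eqref{rhocond} must hold uniformly over all policies rather than merely at $\pi$.
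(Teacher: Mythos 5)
Your proof is correct and follows essentially the same route as the paper: reduce $\mathbb{A}^*_{\pi}=0$ to the pointwise condition $A_\pi(s,a)\le 0$ via $\sum_a\pi(a|s)A_\pi(s,a)=0$ and \eqref{rhocond}, then apply the performance difference identity in both directions. The only (immaterial) difference is in the necessity step, where the paper perturbs $\pi$ at a single improving state so that all other summands vanish exactly, whereas you use the globally greedy policy and argue that the remaining terms are nonnegative; both hinge on the same use of $\rho_{\tilde\pi}(\bar s)\ge\rho_0(\bar s)>0$.
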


\begin{proof}
Since $\sum\limits_{a}\pi(a|s)A_{\pi}(s,a)=0$ for any $s$ and \eqref{rhocond} holds for $\pi$, it is obviously that the condition \eqref{Astaropt} is equivalent to,   for any policy $\pi'$,
\begin{equation}\label{eq:subopt}
\sum_{a}\pi'(a|s)A_{\pi}(s,a)\leq 0,\ \forall s.
\end{equation}

We first prove the sufficiency part. Combining the conditions \eqref{rhocond} and \eqref{eq:subopt} for any policy $ \pi'$, we have that
\begin{equation*}
\eta(\pi')=\eta(\pi)+\sum_{s}\rho_{\pi'}(s)\sum_{a}\pi'(a|s)A_{\pi}(s,a) \leq \eta(\pi).
\end{equation*}
Hence, $\pi$ is an optimal solution of \eqref{etapi}. 
 
We then prove the necessary part by contradiction. 
Suppose that \eqref{eq:subopt} is not satisfied. Then there exists a state $s’$ and policy $\pi'$ such that 
\[
\sum_{a}\pi'(a|s')A_{\pi}(s’,a) >  0.
\]
Define a new policy $\tilde\pi$ as
\[
\tilde\pi(a |s  ) = 
\begin{cases}
\pi'(a |s  ),&\mbox{}\   s = s', \\
\pi(a |s ), &\mbox{}\   s \neq s'.
\end{cases}
\]
Since \eqref{rhocond} holds for $\tilde{\pi}$,  we obtain   %  $\sum\limits_{a}\pi(a|s)A_{\pi}(s,a)=0$ and 
\begin{eqnarray*}
\eta(\tilde\pi)&=&\eta(\pi)+\sum_{s}\rho_{\tilde \pi}(s)\sum_{a}\tilde \pi(a|s)A_{\pi}(s,a) \\
&=& \eta(\pi) + \rho_{\tilde \pi}( s')\sum_{a} \pi'(a|s')A_{\pi}(s',a) > \eta(\pi),
\end{eqnarray*}
i.e., $\pi$ is not an optimal solution of \eqref{etapi}, which completes the proof.
\end{proof}

We next show a lower bound of improvement for the function  $L$ in each step.
  \begin{lemma}\label{Limprove}
  Suppose $\{\pi_k\}$ is the sequence generated by the trust region method, then we have
 \begin{equation*}
 L_{\pi_k}(\pi_{k+1})-L_{\pi_k}(\pi_k)\geq\min(1,(1-\gamma)\delta_k)\mathbb{A}^*_{\pi_k}.
 \end{equation*}
\end{lemma}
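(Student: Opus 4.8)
The plan is to reduce the claim to exhibiting a single feasible policy for the subproblem \eqref{trposubanaly} that already attains the stated improvement, and then to invoke the optimality of $\pi_{k+1}$. Using the identity $L_{\pi_k}(\pi)=\eta(\pi_k)+\mathbb{A}_{\pi_k}(\pi)$ together with $\mathbb{A}_{\pi_k}(\pi_k)=\sum_s\rho_{\pi_k}(s)\sum_a\pi_k(a|s)A_{\pi_k}(s,a)=0$, the increment $L_{\pi_k}(\pi_{k+1})-L_{\pi_k}(\pi_k)$ equals $\mathbb{A}_{\pi_k}(\pi_{k+1})$. Since $\pi_{k+1}$ is a maximizer of $L_{\pi_k}$ (equivalently of $\mathbb{A}_{\pi_k}$) over the trust region of \eqref{trposubanaly}, it suffices to produce one feasible $\pi$ with $\mathbb{A}_{\pi_k}(\pi)\ge\min(1,(1-\gamma)\delta_k)\,\mathbb{A}^*_{\pi_k}$.

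For the construction, I would let $\pi^{*}$ denote the greedy policy attaining $\mathbb{A}^{*}_{\pi_k}$ in \eqref{Astaropt}, i.e.\ the policy that for each $s$ places all mass on $\arg\max_a A_{\pi_k}(s,a)$, and consider the convex combination $\pi_\alpha=(1-\alpha)\pi_k+\alpha\pi^{*}$ with $\alpha=\min(1,(1-\gamma)\delta_k)\in[0,1]$. Feasibility then follows from the homogeneity of the total variation distance under mixing with the common policy $\pi_k$: for every $s$ one has $D_{TV}(\pi_k(\cdot|s),\pi_\alpha(\cdot|s))=\alpha\,D_{TV}(\pi_k(\cdot|s),\pi^{*}(\cdot|s))\le\alpha$, because the TV distance never exceeds $1$. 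Integrating against $\rho_{\pi_k}$ and using $\sum_s\rho_{\pi_k}(s)=1/(1-\gamma)$ bounds the constraint value by $\alpha/(1-\gamma)\le\delta_k$, so $\pi_\alpha$ lies in the trust region.

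Finally, linearity of the map $\pi\mapsto\mathbb{A}_{\pi_k}(\pi)$ together with $\mathbb{A}_{\pi_k}(\pi_k)=0$ yields $\mathbb{A}_{\pi_k}(\pi_\alpha)=(1-\alpha)\mathbb{A}_{\pi_k}(\pi_k)+\alpha\,\mathbb{A}_{\pi_k}(\pi^{*})=\alpha\,\mathbb{A}^{*}_{\pi_k}=\min(1,(1-\gamma)\delta_k)\,\mathbb{A}^{*}_{\pi_k}$, which combined with the reduction of the first paragraph closes the argument. The only delicate point, and the step I would be most careful about, is the bookkeeping of the measure $\rho_{\pi_k}$: because it is the \emph{unnormalized} discounted visitation frequency with total mass $1/(1-\gamma)$ rather than a probability measure, the budget $\delta_k$ is effectively rescaled by $1-\gamma$, and this is precisely what produces the $(1-\gamma)$ factor in the bound. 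I would therefore use the same unnormalized $\rho_{\pi_k}$ in both the objective $L_{\pi_k}$ and the constraint, and bound $D_{TV}\le1$ crudely rather than chase a sharper state-dependent estimate, since the crude bound already suffices.
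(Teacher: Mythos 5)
Your proof is correct and follows essentially the same route as the paper's: a convex combination of $\pi_k$ with the greedy maximizer of $\mathbb{A}_{\pi_k}$, feasibility via $D_{TV}\le 1$ and $\sum_s\rho_{\pi_k}(s)=1/(1-\gamma)$, and linearity of $\pi\mapsto\mathbb{A}_{\pi_k}(\pi)$. The only cosmetic difference is that you handle both regimes at once with the single mixing weight $\alpha=\min(1,(1-\gamma)\delta_k)$, whereas the paper splits into the interior case (where $\pi_{k+1}$ is the unconstrained maximizer) and the boundary case (where it uses the mixture with weight $(1-\gamma)\delta_k$).
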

\begin{proof}	
If the optimal solution of subproblem \eqref{trposubanaly}  lies in the trust region, i.e., 
\begin{equation*}\mathbb{E}_{s \sim \rho_{\pi_{k}}} [D_{TV}(\pi_{k}(\cdot |s)||\pi_{k+1}(\cdot |s))]< \delta_k, 
\end{equation*}
we obtain 
\begin{equation*}
\pi_{k+1} = \mathrm{argmax}_{\pi}\ L_{\pi_k}(\pi)= \mathrm{argmax}_{\pi}\ \mathbb{A}_{\pi_{k}}(\pi).
\end{equation*} 
In other words, $L_{\pi_k}(\pi_{k+1})-L_{\pi_k}(\pi_k) =\mathbb{A}^*_{\pi_k}$.

If the optimal solution of subproblem \eqref{trposubanaly} reaches the boundary, it means that $\pi^*_{k+1}=\mathrm{argmax}_{\pi}\ \mathbb{A}_{\pi_{k}}(\pi)$ is outside the trust region. Take $\hat{\pi}_{k+1}$ as a feasible convex combination of $\pi_k$ and $\pi^*_{k+1}$, that is, set $\beta = (1-\gamma)\delta_k$ and 
\begin{equation*}
\hat{\pi}_{k+1} = (1-\beta)\pi_k+\beta\pi^*_{k+1}.
\end{equation*}
Consequently, we have 
\begin{equation*}
\begin{split}
\mathbb{E}_{s \sim \rho_{\pi_{k}}} [D_{TV}(\pi_{k}(\cdot |s)||\hat{\pi}_{k+1}(\cdot |s))] =& \sum_{s}\rho_{\pi_k}(s)D_{TV}(\pi_{k}(\cdot |s)||\hat{\pi}_{k+1}(\cdot |s))\\
=&\beta\sum_{s}\rho_{\pi_k}(s)D_{TV}(\pi_{k}(\cdot |s)||\pi^*_{k+1}(\cdot |s))\\
\leq& \frac{\beta}{1-\gamma}=\delta_k,
\end{split}\end{equation*}
where the inequality is based on the facts that the total variation distance is uniformly bounded by one and $\sum\limits_{s}\rho_{\pi_k}(s)=\frac{1}{1-\gamma}$. Then, it follows that
 \begin{equation*}
 \begin{split}
  L_{\pi_k}({\pi}_{k+1})-L_{\pi_k}(\pi_k)&\geq L_{\pi_k}(\hat{\pi}_{k+1})-L_{\pi_k}(\pi_k)\\
  &=\sum_{s}\rho_{\pi_k}(s)\sum_{a}\left(\hat{\pi}_{k+1}(a|s)-\pi_{k}(a|s)\right)A_{\pi_k}(s,a)\\
  &=\beta\sum_{s}\rho_{\pi_k}(s)\sum_{a}\pi^*_{k+1}(a|s)A_{\pi_k}(s,a)\\
  &=(1-\gamma)\delta_k\mathbb{A}^*_{\pi_k},
\end{split}\end{equation*}
which completes the proof.
\end{proof}	

The next lemma shows a lower bound of the trust region ratio $r_k$. If the trust region radius goes to zero, we have the ratio goes to one.
 \begin{lemma}
The ratio $r_k$ defined in \eqref{ratio} satisfies that
\begin{equation*}
r_k\geq\min\left(1-\frac{4\bar{A}_k\gamma \delta_k^2}{p_0^2(1-\gamma)^2\mathbb{A}^*_{\pi_k}},1-\frac{4\bar{A}_k\gamma \delta_k}{p_0^2(1-\gamma)^3\mathbb{A}^*_{\pi_k}}\right),
\end{equation*}
where $p_0=\min\limits_{s}\rho_0(s)$ and $\bar{A}_k = \max\limits_{s,a}|A_{\pi_k}(s,a)|$.
 \end{lemma}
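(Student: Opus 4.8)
The plan is to convert the ratio into a perturbation estimate between $\eta$ and its local model $L_{\pi_k}$, and then control that perturbation by the trust region radius. Since $L_{\pi_k}(\pi_k)=\eta(\pi_k)$ by the first equality in \eqref{L}, I would first rewrite
\begin{equation*}
r_k = 1 + \frac{\eta(\pi_{k+1}) - L_{\pi_k}(\pi_{k+1})}{L_{\pi_k}(\pi_{k+1}) - L_{\pi_k}(\pi_k)},
\end{equation*}
so that it suffices to upper bound the numerator $|\eta(\pi_{k+1})-L_{\pi_k}(\pi_{k+1})|$ and lower bound the denominator. The denominator is already controlled by Lemma \ref{Limprove}, giving $L_{\pi_k}(\pi_{k+1})-L_{\pi_k}(\pi_k)\ge \min(1,(1-\gamma)\delta_k)\mathbb{A}^*_{\pi_k}$; here I implicitly assume $\mathbb{A}^*_{\pi_k}>0$, since otherwise $\pi_k$ is already optimal by Lemma \ref{etaoptimalcond}. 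Subtracting the definitions of $\eta$ and $L$ (cf.\ \eqref{L}) yields the clean expression
\begin{equation*}
\eta(\pi_{k+1}) - L_{\pi_k}(\pi_{k+1}) = \sum_s \big(\rho_{\pi_{k+1}}(s) - \rho_{\pi_k}(s)\big)\sum_a \pi_{k+1}(a|s)A_{\pi_k}(s,a),
\end{equation*}
which isolates the discrepancy into the mismatch of visitation frequencies times a per-state advantage term.

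Next I would extract a uniform pointwise distance bound from the trust region constraint. Because $\rho_{\pi_k}(s)\ge\rho_0(s)\ge p_0>0$ by \eqref{rhocond} and each summand is nonnegative, feasibility of $\pi_{k+1}$ forces $D_{TV}(\pi_k(\cdot|s)\|\pi_{k+1}(\cdot|s))\le \delta_k/\rho_{\pi_k}(s)\le \delta_k/p_0$ for every state $s$. This single inequality controls both factors of the numerator. For the advantage term I would exploit $\sum_a \pi_k(a|s)A_{\pi_k}(s,a)=0$ to rewrite $\sum_a \pi_{k+1}(a|s)A_{\pi_k}(s,a)=\sum_a (\pi_{k+1}-\pi_k)(a|s)A_{\pi_k}(s,a)$ and bound it by $2\bar{A}_k D_{TV}\le 2\bar{A}_k\delta_k/p_0$, which is where the first factor of $1/p_0$ (and the factor of two) enters.

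The main obstacle is bounding the visitation-frequency mismatch $\sum_s|\rho_{\pi_{k+1}}(s)-\rho_{\pi_k}(s)|$ in the $\ell_1$ sense. My plan is the standard perturbation argument: writing $\rho_\pi=(I-\gamma P_\pi^\top)^{-1}\rho_0$ with $P_\pi(s'|s)=\sum_a \pi(a|s)P(s'|s,a)$ the policy-induced transition matrix, the resolvent identity gives $\rho_{\pi_{k+1}}-\rho_{\pi_k}=\gamma(I-\gamma P_{\pi_{k+1}}^\top)^{-1}(P_{\pi_{k+1}}^\top-P_{\pi_k}^\top)\rho_{\pi_k}$. Since $(I-\gamma P^\top)^{-1}$ has $\ell_1$ operator norm at most $1/(1-\gamma)$ by a Neumann-series bound using column-stochasticity of $P^\top$, and $\|P_{\pi_{k+1}}(\cdot|s)-P_{\pi_k}(\cdot|s)\|_1\le 2D_{TV}(\pi_{k+1}\|\pi_k)\le 2\delta_k/p_0$, together with $\sum_s\rho_{\pi_k}(s)=1/(1-\gamma)$ this yields $\sum_s|\rho_{\pi_{k+1}}(s)-\rho_{\pi_k}(s)|\le 2\gamma\delta_k/(p_0(1-\gamma)^2)$. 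Getting these operator-norm estimates right is the delicate part, and it supplies the second factor of $1/p_0$ and the extra $(1-\gamma)$ powers.

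Finally I would assemble the pieces: multiplying the visitation-frequency bound by the per-state advantage bound gives $|\eta(\pi_{k+1})-L_{\pi_k}(\pi_{k+1})|\le 4\bar{A}_k\gamma\delta_k^2/(p_0^2(1-\gamma)^2)$, whence
\begin{equation*}
r_k \ge 1 - \frac{4\bar{A}_k\gamma\delta_k^2}{p_0^2(1-\gamma)^2\,\min(1,(1-\gamma)\delta_k)\,\mathbb{A}^*_{\pi_k}}.
\end{equation*}
Splitting on whether $\min(1,(1-\gamma)\delta_k)$ equals $1$ or $(1-\gamma)\delta_k$ produces exactly the two arguments of the minimum in the statement, and since the bound holds in whichever case occurs, taking the smaller of the two lower bounds covers both. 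As a byproduct, both correction terms vanish as $\delta_k\to 0$, confirming that $r_k\to 1$ in the small-radius limit.
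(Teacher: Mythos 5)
Your proposal is correct and reaches the same final constants, but the key step is handled by a genuinely different argument. The overall skeleton coincides with the paper's: both rewrite $r_k = 1 + \bigl(\eta(\pi_{k+1})-L_{\pi_k}(\pi_{k+1})\bigr)/\bigl(L_{\pi_k}(\pi_{k+1})-L_{\pi_k}(\pi_k)\bigr)$, lower-bound the denominator by Lemma \ref{Limprove}, and extract the pointwise bound $D_{TV}(\pi_k(\cdot|s)\|\pi_{k+1}(\cdot|s))\le\delta_k/p_0$ from feasibility together with \eqref{rhocond}. The difference is in how the gap $\eta(\pi_{k+1})-L_{\pi_k}(\pi_{k+1})$ is controlled: the paper simply invokes Theorem~1 of \cite{schulman2015trust}, which gives $\eta(\pi_{k+1})\ge L_{\pi_k}(\pi_{k+1})-4\bar{A}_k\gamma\alpha^2/(1-\gamma)^2$ with $\alpha=\max_s D_{TV}$ and is itself proved there by a coupling argument; you instead reprove this bound from scratch by writing the gap as $\sum_s(\rho_{\pi_{k+1}}(s)-\rho_{\pi_k}(s))\sum_a\pi_{k+1}(a|s)A_{\pi_k}(s,a)$, bounding the advantage factor by $2\bar{A}_k\delta_k/p_0$ using $\sum_a\pi_k(a|s)A_{\pi_k}(s,a)=0$, and bounding the visitation mismatch in $\ell_1$ by $2\gamma\delta_k/(p_0(1-\gamma)^2)$ via the resolvent identity $\rho_{\pi_{k+1}}-\rho_{\pi_k}=\gamma(I-\gamma P_{\pi_{k+1}}^\top)^{-1}(P_{\pi_{k+1}}^\top-P_{\pi_k}^\top)\rho_{\pi_k}$ and a Neumann-series norm estimate. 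Your route is more self-contained (no external citation needed), gives a two-sided bound on the gap where only a one-sided one is required, and the arithmetic checks out to yield exactly $4\bar{A}_k\gamma\delta_k^2/(p_0^2(1-\gamma)^2)$, after which splitting $\min(1,(1-\gamma)\delta_k)$ into its two cases reproduces the stated minimum; the paper's route is shorter at the cost of leaning on the imported TRPO bound. One cosmetic remark: your intermediate claim applies $\alpha\le\delta_k/p_0$ inside the perturbation estimate state by state, which is slightly sharper than needed, but this only strengthens the inequality.
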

 \begin{proof}
 It follows from \cite[Theorem 1]{schulman2015trust} that
 \begin{equation}\label{eq:leta}
 \eta({\pi}_{k+1})\geq L_{\pi_k}({\pi}_{k+1})-\frac{4\bar{A}_k\gamma\alpha^2}{(1-\gamma)^2},
 \end{equation}
 where $\alpha = \max\limits_{s}D_{TV}(\pi_{k}(\cdot |s)||{\pi}_{k+1}(\cdot |s))$. Thus,
 \begin{equation*}\begin{split}
 r_{k}&=\frac{\eta({\pi}_{k+1})-\eta(\pi_{k})}{ L_{\pi_k}({\pi}_{k+1})-L_{\pi_k}(\pi_k)}\\
 &\geq 1-\frac{4\bar{A}_k\gamma\alpha^2}{(1-\gamma)^2(L_{\pi_k}({\pi}_{k+1})-L_{\pi_k}(\pi_k))}\\
 &\geq 1-\frac{4\bar{A}_k\gamma\alpha^2}{(1-\gamma)^2\min(1,(1-\gamma)\delta_k)\mathbb{A}^*_{\pi_k} }.
\end{split} \end{equation*}
 Since the relationship
 \begin{equation*}\begin{split}
 \delta_k&\geq\mathbb{E}_{s \sim \rho_{\pi_{k}}} [D_{TV}(\pi_{k}(\cdot |s)||{\pi}_{k+1}(\cdot |s))]\geq p_0\max_{s}[D_{TV}(\pi_{k}(\cdot |s)||{\pi}_{k+1}(\cdot |s))] 
\end{split} \end{equation*}
 holds, i.e., $\alpha\leq\frac{\delta_k}{p_0}$, then we have that
  \begin{equation*}
 r_{k}\geq 1-\frac{4\bar{A}_k\gamma\delta_k^2}{p_0^2(1-\gamma)^2\min(1,(1-\gamma)\delta_k)\mathbb{A}^*_{\pi_k} }.
 \end{equation*}
\end{proof}

In particular, the inequality \eqref{eq:leta} only provides some descent properties of the objective function $\eta$, but it  cannot guarantee the convergence directly.
Finally, we show our main theorem.
\begin{theorem}[Convergence]\label{thm:conv-dis}
    Suppose that $\{\pi_k\}$ is a sequence generated by our trust region method, then we have the following conclusions.
\begin{enumerate}
\item $\underset{k \rightarrow \infty}{\lim\inf}\ \mathbb{A}^*_{\pi_k}=0$.
\item $\lim\limits_{k\to \infty}\eta(\pi_k)=\eta(\pi^*)$, where $\pi^*$ is an optimal solution of \eqref{etapi}.
\end{enumerate}
\end{theorem}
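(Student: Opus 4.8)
The plan is to follow the standard trust-region convergence machinery, adapted to the policy setting, and to split the argument into the two claimed conclusions. Throughout I would use that the reward is bounded, so $\eta$ is bounded above by $R_{\max}/(1-\gamma)$ and the advantage is uniformly bounded: $\bar A_k=\max_{s,a}|A_{\pi_k}(s,a)|\leq\bar A:=2R_{\max}/(1-\gamma)$ for all $k$. I would also record the monotonicity of the iterates: by the acceptance rule \eqref{updatetheta}, $\eta(\pi_{k+1})=\eta(\pi_k)$ on an unsuccessful step, while on a successful step $r_k\geq\beta_0$ together with Lemma \ref{Limprove} gives
\begin{equation*}
\eta(\pi_{k+1})-\eta(\pi_k)\geq\beta_0\bigl(L_{\pi_k}(\pi_{k+1})-L_{\pi_k}(\pi_k)\bigr)\geq\beta_0\min(1,(1-\gamma)\delta_k)\,\mathbb{A}^*_{\pi_k}\geq0.
\end{equation*}
Hence $\{\eta(\pi_k)\}$ is nondecreasing and bounded above, so it converges.

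For conclusion (1) I would argue by contradiction. Suppose $\liminf_k\mathbb{A}^*_{\pi_k}>0$, so there exist $K$ and $\epsilon>0$ with $\mathbb{A}^*_{\pi_k}\geq\epsilon$ for all $k\geq K$. Feeding $\bar A_k\leq\bar A$ and $\mathbb{A}^*_{\pi_k}\geq\epsilon$ into the preceding lower bound on $r_k$ shows that the bound tends to $1$ as $\delta_k\to0$; hence there is a threshold $\bar\delta>0$ such that $\delta_k\leq\bar\delta$ forces $r_k\geq\beta_1$. The key consequence is that the radius cannot collapse: whenever $\delta_k\leq\bar\delta$ the update \eqref{updateC} expands it ($\delta_{k+1}=\gamma_1\delta_k$), and whenever $\delta_k>\bar\delta$ it can shrink by at most a factor $\gamma_3$, so a short induction gives a uniform lower bound $\delta_k\geq\delta_{\min}:=\min(\delta_K,\gamma_3\bar\delta)>0$ for all $k\geq K$. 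Now I would distinguish two cases. If there are infinitely many successful steps, the displayed increase is at least $\beta_0\min(1,(1-\gamma)\delta_{\min})\epsilon>0$ on each of them, forcing $\eta(\pi_k)\to\infty$ and contradicting boundedness. If instead there are only finitely many successful steps, then beyond the last one every step is unsuccessful, so $\delta_{k+1}=\gamma_3\delta_k$ repeatedly and $\delta_k\to0$, contradicting $\delta_{\min}>0$. Both cases are impossible, so $\liminf_k\mathbb{A}^*_{\pi_k}=0$.

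For conclusion (2) I would combine (1) with compactness and continuity. Since $\mathcal S$ and $\mathcal A$ are finite, the policies live in a compact product of simplices, and $\rho_\pi$, $A_\pi$, and hence $\mathbb{A}^*_\pi=\sum_s\rho_\pi(s)\max_a A_\pi(s,a)$ and $\eta$ are continuous in $\pi$. By (1) I pick a subsequence with $\mathbb{A}^*_{\pi_{k_j}}\to0$; passing to a further subsequence by compactness, $\pi_{k_j}\to\bar\pi$, and continuity gives $\mathbb{A}^*_{\bar\pi}=0$, so $\bar\pi$ is optimal by Lemma \ref{etaoptimalcond} and $\eta(\bar\pi)=\eta(\pi^*)$. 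Then $\eta(\pi_{k_j})\to\eta(\pi^*)$, and since $\{\eta(\pi_k)\}$ is nondecreasing with a subsequence tending to the optimal value $\eta(\pi^*)$, which is an upper bound for every $\eta(\pi_k)$, the whole sequence converges to $\eta(\pi^*)$.

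The main obstacle I anticipate is the radius-stability step in conclusion (1): correctly coupling the acceptance test \eqref{updatetheta} with the three-way radius update \eqref{updateC} to rule out $\delta_k\to0$, and in particular making the threshold $\bar\delta$ depend only on the uniform constants $\epsilon,\bar A,\gamma,p_0$ so that the induction for $\delta_k\geq\delta_{\min}$ is clean. A secondary point requiring care is justifying the continuity of $\mathbb{A}^*_\pi$ used in conclusion (2), which rests on the identity $\max_{\pi'(\cdot|s)}\sum_a\pi'(a|s)A_\pi(s,a)=\max_a A_\pi(s,a)$ and on the smoothness of $\rho_\pi$ and $A_\pi$ in $\pi$.
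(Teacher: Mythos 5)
Your proposal is correct and follows essentially the same route as the paper: a contradiction argument for part (1) built on the ratio lower bound, a threshold radius $\bar\delta$ below which $r_k\geq\beta_1$, a resulting uniform lower bound on $\delta_k$, and then monotonicity plus continuity for part (2). If anything, your write-up is slightly more careful than the paper's at two points the paper glosses over --- the explicit split into the cases of finitely versus infinitely many successful iterations, and the use of compactness of the policy simplex to extract a convergent subsequence in part (2) rather than inferring $\pi_{k_n}\to\pi^*$ directly from continuity of $\mathbb{A}^*_\pi$.
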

\begin{proof}
We prove the first statement by contradiction. Suppose that $\exists\ \epsilon>0$ and $K\in\mathbb{N}$ such that 
\begin{equation*}
\mathbb{A}_{\pi_k}^*\geq\epsilon,\ \forall k>K.
\end{equation*}
Without loss of generality, we can assume $(1-\gamma)\delta_k<1$ holds for any $k>K$. Then we have 
\begin{equation*}
 L_{\pi_k}({\pi}_{k+1})-L_{\pi_k}(\pi_k)\geq (1-\gamma)\delta_k\epsilon,\\
 r_k\geq1-\frac{4\bar{A}_k\gamma \delta_k}{p_0^2(1-\gamma)^3\epsilon}.
\end{equation*}
Take $\bar{\delta} = \frac{p_0^2(1-\gamma)^3\epsilon(1-\beta_1)}{4\bar{A}\gamma}$, where $\bar{A}\geq\max\limits_{k\geq K}\bar{A}_k$. Once $\delta_k\leq\bar{\delta}$, then
\begin{equation*}
r_{k}\geq\beta_1\mathrm{\ and \ }\delta_{k+1}\geq\delta_k.
\end{equation*}
Hence, we obtain
\begin{equation}\label{deltabound}
\delta_{k}\geq\min(\delta_K,\gamma_2 \bar{\delta}),\ \forall k>K.
\end{equation}
We next claim that $r_k\geq\beta_1$ occurs infinite many times. If not, then there exists $K_1>K$ such that $r_k<\beta_1$ and $\delta_{k+1}<\delta_k$, $\forall k>K_1$, which conflicts with \eqref{deltabound}. Therefore, we have
\begin{equation*}
\eta({\pi}_{k+1})-\eta(\pi_k)=r_k \left(L(\pi_{k+1})-L(\pi_k)\right)\geq(1-\gamma)\delta_k\epsilon\beta_0,\ \forall k> K.
\end{equation*}
Since $\eta(\pi_k)$ is monotone and bounded, we have $\delta_k\rightarrow0$ which is a contradiction.

Denote a subsequence $\{k_{n}\}$ such that $\lim\limits_{n\to \infty}\mathbb{A}^*_{\pi_{k_n}}=0$. Then the continuity of $\mathbb{A}^*_{\pi}$ with respect to $\pi$ indicates that $\lim\limits_{n\to \infty}\pi_{k_n}=\pi^*$. Consequently, the subsequence $\{\eta(\pi_{k_n})\}$ converges since $\eta$ is continuous with respect to $\pi$. Therefore, the full sequence $\{\eta(\pi_{k})\}$ converges since $\{\eta(\pi_k)\}$ is monotone and bounded.
\end{proof}

\subsubsection{Parameterized Policy}
We consider the parameterization of the policy in this part. We mainly focus on the continuous action space and restrict the 
policy into the  Gaussian distribution.
\begin{assumption}\label{assum:gaussian}
    The policy is assumed to be a Gaussian distribution with state-dependent mean vector $\mu(s) \in \mathbb{R}^n$ and a state-independent covariance matrix $\Sigma = \mathrm{diag}(\sigma^2)$, where $\sigma\in\mathbb{R}^n$ is the standard deviation vector and is assumed to be bounded below, i.e. $\sigma^{(i)} \geq \underline\sigma, i = 1,2, \cdots,n.$
\end{assumption}

The parameterization is $\theta=\{\mu(s),\sigma\}_{s \in \mathcal{S}}$ and the policy
$\pi_{\theta_k}(\cdot|s)$ obeys the Gaussian distribution $\mathcal{N}(\mu_k(s),\sigma_k^2)$.
For simplicity, in the following discussion, we often use $\mu$ to represent the notation $\mu(s)$ by ignoring $s$.
The discussion is based on the model \eqref{trposub} with the KL divergence:
\begin{equation}\label{ep-theory}
    \max_{\mu,\sigma}\  L_{\mu_k,\sigma_k}(\mu,\sigma),\ \mathrm{s.t.\ } \mathrm{E}_{s\sim \rho_{\mu_k,\sigma_k}}D_{KL}(\pi_{\mu_k,\sigma_k}(\cdot|s)||\pi_{\mu,\sigma}(\cdot|s))\leq\delta_k.
\end{equation}
Since an exact maximizer of \eqref{ep-theory} may be hard to compute, we use an alternative strategy to get an approximate solution. Firstly, by fixing $\sigma_k$, we update $\mu_k$ by solving the subproblem 
%Specifically, we approximately solve \eqref{sep-gaussian} by updating mean and standard deviation parameters alternatively:
\begin{equation}\label{ep-sigma-theory}
    \max_{\mu}\  L_{\mu_k,\sigma_k}(\mu,\sigma_k),\ \mathrm{s.t.\ } \mathrm{E}_{s\sim \rho_{\mu_k,\sigma_k}}D_{KL}(\pi_{\mu_k,\sigma_k}(\cdot|s)||\pi_{\mu,\sigma_k}(\cdot|s))\leq\delta_k.
\end{equation}
The KL divergence between two Gaussian policies with the same standard deviation can be written as 
\begin{equation*}
    D_{KL}(\pi_{\mu_k,\sigma_k}(\cdot|s)||\pi_{\mu,\sigma_k}(\cdot|s)) = \frac{1}{2}(\mu-\mu_k)^T\Sigma_k^{-1}(\mu-\mu_k).
\end{equation*} 
Then, we update $\sigma_k$ from
\begin{equation}\label{ep-sigma}
    \max_{\sigma}\  L_{\mu_k,\sigma_k}(\mu_{k+1},\sigma),\ \mathrm{s.t.\ } \mathrm{E}_{s\sim \rho_{\mu_k,\sigma_k}}D_{KL}(\pi_{\mu_k,\sigma_k}(\cdot|s)||\pi_{\mu_{k+1},\sigma}(\cdot|s))\leq\delta_k.
\end{equation}
We also make some assumptions on the smoothness of the function $L$.
\begin{assumption}\label{assum:hessen}
    The Hessian matrix $\nabla_\mu^2L_{\mu_k,\sigma_k}(\mu,\sigma_k)$ is uniformly bounded within the trust region, i.e., $h_k = \max\limits_{\mu\in\mathcal{B}_k}\|\nabla_\mu^2L_{\mu,\sigma_k}(\mu,\sigma_k)\|+1 \leq h$ for $\forall k$, where $\mathcal{B}_k=\{\mu| \mathrm{E}_{s\sim \rho_{\mu_k,\sigma_k}}D_{KL}(\pi_{\mu_k,\sigma_k}(\cdot|s)||\pi_{\mu,\sigma_k}(\cdot|s))\leq\delta_k\}$.
\end{assumption}
 
 The next lemma shows that the solution of the subproblem \eqref{ep-sigma-theory} provides a lower bounded model improvement.
\begin{lemma}\label{delta_Lest}
    Suppose that Assumptions \ref{assum:gaussian}-\ref{assum:hessen} hold. The maximizer of \eqref{ep-sigma-theory} satisfies that
\begin{equation}\label{delta_Lestimation}
L_{\mu_k,\sigma_k}(\mu_{k+1},\sigma_k)-L_{\mu_k,\sigma_k}(\mu_{k},\sigma_k)\geq \|g_k\|^2\min\left(\frac{1}{2h_k},\frac{\underline\sigma\sqrt{\delta_k}}{2\|g_k\|}\right),
\end{equation}
where $g_k = \nabla_\mu L_{\mu_k,\sigma_k}(\mu_{k},\sigma_k)$.
\end{lemma}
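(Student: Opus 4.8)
The plan is to recognize \eqref{ep-sigma-theory} as a trust-region subproblem with objective $L_{\mu_k,\sigma_k}(\cdot,\sigma_k)$ over an ellipsoidal feasible set, and to prove the bound by the classical Cauchy-point argument. Since $\mu_{k+1}$ is the \emph{exact} maximizer of \eqref{ep-sigma-theory}, it suffices to exhibit one feasible point $\hat\mu$ at which $L_{\mu_k,\sigma_k}(\hat\mu,\sigma_k)$ already exceeds $L_{\mu_k,\sigma_k}(\mu_k,\sigma_k)$ by the right-hand side of \eqref{delta_Lestimation}; the optimality inequality $L_{\mu_k,\sigma_k}(\mu_{k+1},\sigma_k)\ge L_{\mu_k,\sigma_k}(\hat\mu,\sigma_k)$ then closes the argument. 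The natural candidate is the steepest-ascent step $\hat\mu(t)=\mu_k+t\,g_k$ with $g_k=\nabla_\mu L_{\mu_k,\sigma_k}(\mu_k,\sigma_k)$ and a step length $t\ge 0$ to be fixed.

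First I would convert the KL constraint into a Euclidean ball. Because the KL divergence between the two Gaussians equals $\frac{1}{2}(\mu-\mu_k)^T\Sigma_k^{-1}(\mu-\mu_k)$ and Assumption~\ref{assum:gaussian} gives the uniform bound $v^T\Sigma_k^{-1}v\le\underline\sigma^{-2}\|v\|^2$ at every state, the feasible set $\mathcal{B}_k$ contains every $\mu$ with $\|\mu-\mu_k\|\le\underline\sigma\sqrt{\delta_k}$ (the exact constant coming from the $\frac{1}{2}$ in the KL formula and the normalization of $\rho_{\mu_k,\sigma_k}$ being routine bookkeeping). Hence the whole segment $\{\hat\mu(t):0\le t\le t_{\max}\}$ with $t_{\max}=\underline\sigma\sqrt{\delta_k}/\|g_k\|$ lies inside $\mathcal{B}_k$: here I use that $\mathcal{B}_k$ is the convex sublevel set of a quadratic and contains its center $\mu_k$, so Assumption~\ref{assum:hessen} applies along the entire segment.

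Next I would Taylor-expand $L$ along the ray. Writing $\phi(t)=L_{\mu_k,\sigma_k}(\hat\mu(t),\sigma_k)-L_{\mu_k,\sigma_k}(\mu_k,\sigma_k)$ and bounding the second-order Lagrange term by the Hessian bound $h_k$ of Assumption~\ref{assum:hessen} yields $\phi(t)\ge t\|g_k\|^2-\frac{1}{2}h_k t^2\|g_k\|^2$ for $t\in[0,t_{\max}]$. This concave parabola is maximized at $t^*=1/h_k$. If $t^*\le t_{\max}$ I take $t=t^*$ and obtain $\phi\ge\|g_k\|^2/(2h_k)$; otherwise the parabola is still increasing on $[0,t_{\max}]$, so taking $t=t_{\max}$ and using $h_k t_{\max}<1$ to absorb the quadratic term gives $\phi\ge\frac{1}{2}t_{\max}\|g_k\|^2=\underline\sigma\sqrt{\delta_k}\,\|g_k\|/2$. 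Combining the two cases into a single $\min$ and invoking $L_{\mu_k,\sigma_k}(\mu_{k+1},\sigma_k)\ge\phi(t)+L_{\mu_k,\sigma_k}(\mu_k,\sigma_k)$ reproduces \eqref{delta_Lestimation}.

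The main obstacle is not the Cauchy-point computation, which is textbook, but the clean reduction of the state-dependent, $\Sigma_k^{-1}$-weighted KL ball to a plain Euclidean ball of radius $\underline\sigma\sqrt{\delta_k}$: one must use the uniform lower bound $\underline\sigma$ on the per-coordinate standard deviations to dominate $\Sigma_k^{-1}$ simultaneously at every state $s$, and carefully track the constant coming from the $\frac{1}{2}$ in the KL divergence and the normalization of $\rho_{\mu_k,\sigma_k}$. Verifying that the whole segment stays feasible, so that the Hessian bound $h_k$ is legitimately applied, is the only other place where the convexity of $\mathcal{B}_k$ is needed.
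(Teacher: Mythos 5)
Your proposal is correct and is essentially the paper's own argument: both construct the Cauchy point $\mu_k+\tau g_k$ with $\tau\le\underline\sigma\sqrt{\delta_k}/\|g_k\|$, Taylor-expand with the Hessian bound $h_k$ from Assumption \ref{assum:hessen}, split into the two cases $1/h_k\lessgtr\underline\sigma\sqrt{\delta_k}/\|g_k\|$, and conclude by optimality of $\mu_{k+1}$. Your version is if anything slightly more careful, since you make explicit the reduction of the $\Sigma_k^{-1}$-weighted KL ball to a Euclidean ball of radius $\underline\sigma\sqrt{\delta_k}$, which the paper asserts without comment.
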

\begin{proof}
    Take $\mu_{k+1}^c:=\mu_k+\tau g_k$, where $\tau \in(0,\frac{\underline\sigma\sqrt{\delta_k}}{\|g_k\|}]$. Then we have $\mu_{k+1}^c\in \mathcal{B}_k$ and 
\begin{equation}\label{lower1}
\begin{split}
 L_{\mu_k,\sigma_k}(\mu^c_{k+1},\sigma_k) = &L_{\mu_k,\sigma_k}(\mu_{k},\sigma_k)+\tau \|g_k\|^2 +\frac{1}{2}\tau^2\iprod{g_k}{\nabla^2L_{\mu_k,\sigma_k}(\hat{\mu}_{k+1})g_k}\\
\geq& L_{\mu_k,\sigma_k}(\mu_{k},\sigma_k)+(\tau-\frac{1}{2}\tau^2 h_k) \|g_k\|^2,
\end{split}
\end{equation}
where $\hat{\mu}_{k+1}\in \mathcal{B}_k$ lies between $\mu_k$ and $\mu^c_{k+1}$. If $\frac{1}{h_k} \leq \frac{\underline\sigma\sqrt{\delta_k}}{\|g_k\|} $, by taking $\tau = \frac{1}{h_k}$, we can obtain  
\begin{equation}\label{j=0}
  L_{\mu_k,\sigma_k}(\mu^c_{k+1},\sigma_k)-L_{\mu_k,\sigma_k}(\mu_{k},\sigma_k)\geq\frac{\|g_k\|^2}{2h_k}.
\end{equation}
Otherwise $\frac{1}{h_k} > \frac{\underline\sigma\sqrt{\delta_k}}{\|g_k\|} $, we set $\tau = \frac{\underline\sigma\sqrt{\delta_k}}{\|g_k\|}$ and get  
\begin{equation}\label{j>0}
    L_{\mu_k,\sigma_k}(\mu^c_{k+1},\sigma_k)-L_{\mu_k,\sigma_k}(\mu_{k},\sigma_k)\geq\underline\sigma\sqrt{\delta_k}\|g_k\| - \frac{1}{2} \underline\sigma^2\delta_k h_k \geq \frac{1}{2}\underline\sigma\sqrt{\delta_k}\|g_k\| .
\end{equation}
It follows from \eqref{j=0} and \eqref{j>0} that at the point $\mu_{k+1}^c$,
\begin{equation}
L_{\mu_k,\sigma_k}(\mu^c_{k+1},\sigma_k)-L_{\mu_k,\sigma_k}(\mu_{k},\sigma_k)\geq \|g_k\|^2\min\left(\frac{1}{2h_k},\frac{\underline\sigma\sqrt{\delta_k}}{2\|g_k\|}\right),
\end{equation}
which completes the proof.
\end{proof}

Then we show a lower bound of the trust region ratio. 
\begin{lemma}\label{rklowerbound}
    Suppose that Assumptions \ref{assum:gaussian}-\ref{assum:hessen} hold. 
    The ratio defined in \eqref{ratio} satisfies that
\begin{equation}
r_k\geq1-\frac{8\bar{A}_k\gamma\delta_k^2\max(\frac{\|g_k\|}{\underline\sigma_k\sqrt{\delta_k}},h_k)}{p_0^2(1-\gamma)^2\|g_k\|^2},
\end{equation}
where $p_0=\min\limits_{s}\rho_0(s)$ and $\bar{A}_k = \max\limits_{s,a}|A_{\mu_k,\sigma_k}(s,a)|$.
\end{lemma}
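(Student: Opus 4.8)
The plan is to reproduce, in the parameterized Gaussian setting, the argument behind the unparameterized ratio lemma, replacing the explicit total-variation trust region by the Kullback--Leibler one and replacing the model-improvement estimate of Lemma~\ref{Limprove} by that of Lemma~\ref{delta_Lest}. Throughout I treat the relevant transition as the mean update $(\mu_k,\sigma_k)\to(\mu_{k+1},\sigma_k)$ with $\sigma_k$ held fixed, so that the ratio \eqref{ratio} reads $r_k = (\eta(\mu_{k+1},\sigma_k)-\eta(\mu_k,\sigma_k))/(L_{\mu_k,\sigma_k}(\mu_{k+1},\sigma_k)-L_{\mu_k,\sigma_k}(\mu_k,\sigma_k))$, and $\mu_{k+1}$ is the feasible point produced in Lemma~\ref{delta_Lest}.

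First I would invoke the surrogate penalty bound \cite[Theorem 1]{schulman2015trust}, the exact analogue of \eqref{eq:leta}, applied to the two Gaussian policies $\pi_{\mu_k,\sigma_k}$ and $\pi_{\mu_{k+1},\sigma_k}$: namely $\eta(\mu_{k+1},\sigma_k) \ge L_{\mu_k,\sigma_k}(\mu_{k+1},\sigma_k) - 4\bar A_k\gamma\alpha^2/(1-\gamma)^2$, where $\alpha = \max_s D_{TV}(\pi_{\mu_k,\sigma_k}(\cdot|s)\|\pi_{\mu_{k+1},\sigma_k}(\cdot|s))$. Using the first-order matching $\eta(\mu_k,\sigma_k)=L_{\mu_k,\sigma_k}(\mu_k,\sigma_k)$ from \eqref{1stmatching}, I rewrite $r_k = 1 + (\eta(\mu_{k+1},\sigma_k)-L_{\mu_k,\sigma_k}(\mu_{k+1},\sigma_k))/\Delta L_k$ with $\Delta L_k := L_{\mu_k,\sigma_k}(\mu_{k+1},\sigma_k)-L_{\mu_k,\sigma_k}(\mu_k,\sigma_k)>0$, and the penalty bound immediately gives $r_k \ge 1 - 4\bar A_k\gamma\alpha^2/((1-\gamma)^2\Delta L_k)$.

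Next I would lower bound the denominator by Lemma~\ref{delta_Lest}, namely $\Delta L_k \ge \|g_k\|^2\min(1/(2h_k),\, \underline\sigma\sqrt{\delta_k}/(2\|g_k\|))$. Rewriting the minimum of two reciprocals as the reciprocal of a maximum turns this into $\Delta L_k \ge \|g_k\|^2/(2\max(h_k,\|g_k\|/(\underline\sigma\sqrt{\delta_k})))$; substituting this into the previous display produces the factor $\max(\|g_k\|/(\underline\sigma\sqrt{\delta_k}),h_k)$, the $\|g_k\|^2$ in the denominator, and the constant $8$ appearing in the claimed inequality.

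The remaining and most delicate step is to control the maximal per-state total-variation distance $\alpha$ by the radius $\delta_k$. Here I would use the equal-covariance formula $D_{KL}(\pi_{\mu_k,\sigma_k}(\cdot|s)\|\pi_{\mu_{k+1},\sigma_k}(\cdot|s)) = \frac{1}{2}(\mu_{k+1}-\mu_k)^T\Sigma_k^{-1}(\mu_{k+1}-\mu_k)$ together with the visitation lower bound $\rho_{\mu_k,\sigma_k}(s)\ge\rho_0(s)\ge p_0$ of \eqref{rhocond}: since $\delta_k \ge \mathbb{E}_{s\sim\rho_{\mu_k,\sigma_k}}D_{KL} \ge p_0\max_s D_{KL}$, the feasibility of $\mu_{k+1}$ forces $\max_s D_{KL}\le\delta_k/p_0$, and a Pinsker-type comparison $D_{TV}\le\sqrt{D_{KL}/2}$ then bounds $\alpha$ in terms of $\delta_k/p_0$. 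Inserting this into $r_k \ge 1 - 8\bar A_k\gamma\alpha^2\max(\|g_k\|/(\underline\sigma\sqrt{\delta_k}),h_k)/((1-\gamma)^2\|g_k\|^2)$ yields a lower bound of exactly the stated shape. I expect this final TV--KL comparison to be the main obstacle: extracting a dependence on $\delta_k$ and $p_0$ that is consistent both with the denominator estimate and with the exact powers claimed in the inequality is the step most in need of careful bookkeeping, since the natural Pinsker estimate controls $\alpha^2$ by $\delta_k$ rather than by $\delta_k^2$, and the constants have to be tracked against the precise form of the surrogate penalty bound.
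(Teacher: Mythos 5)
Your outline reproduces the paper's own argument almost step for step: the same decomposition $r_k = 1+(\eta-L)/\Delta L_k$, the same appeal to \cite[Theorem 1]{schulman2015trust} together with $\eta(\mu_k,\sigma_k)=L_{\mu_k,\sigma_k}(\mu_k,\sigma_k)$, the same lower bound on the denominator from Lemma~\ref{delta_Lest}, and the same conversion of $\min\left(\tfrac{1}{2h_k},\tfrac{\underline\sigma\sqrt{\delta_k}}{2\|g_k\|}\right)$ into $\tfrac{1}{2}\max\left(h_k,\tfrac{\|g_k\|}{\underline\sigma\sqrt{\delta_k}}\right)^{-1}$, which is where the $8$ comes from. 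The only structural difference is that you freeze $\sigma_{k+1}=\sigma_k$, whereas the lemma concerns the joint trial point $(\mu_{k+1},\sigma_{k+1})$; the paper bridges this with one extra inequality, namely $L_{\mu_k,\sigma_k}(\mu_{k+1},\sigma_{k+1})\geq L_{\mu_k,\sigma_k}(\mu_{k+1},\sigma_k)$, which holds because $\sigma_{k+1}$ is chosen in \eqref{ep-sigma} to further increase $L$ at fixed $\mu_{k+1}$. You should include that step, since decreasing the (positive) denominator of the negative correction term preserves the lower bound.

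The obstacle you flag at the end is genuine, and it is in fact a gap in the paper's own proof rather than something you are missing. The paper simply writes the penalty term as $-\tfrac{4\bar{A}_k\gamma\delta_k^2}{p_0^2(1-\gamma)^2}$, i.e.\ it asserts $\alpha\leq\delta_k/p_0$ for $\alpha=\max_s D_{TV}$, exactly as in the unparameterized lemma where the trust region was itself a TV constraint. Under the KL constraint of \eqref{ep-theory}, the visitation bound \eqref{rhocond} only gives $\max_s D_{KL}\leq\delta_k/p_0$, and Pinsker then yields $\alpha^2\leq\delta_k/(2p_0)$, not $\delta_k^2/p_0^2$; since neither quantity dominates the other uniformly in $\delta_k$ and $p_0$, the stated power of $\delta_k$ does not follow as written. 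The honest version of the lemma has $\delta_k/(2p_0)$ in place of $\delta_k^2/p_0^2$ in the correction term; this still tends to zero as $\delta_k\to 0$ (the worst branch scales like $\sqrt{\delta_k}$ after dividing by the $\underline\sigma\sqrt{\delta_k}$ branch of the denominator), so the subsequent convergence theorem survives with the exponents in $\bar{\delta}$ adjusted accordingly. In short: your proof is the paper's proof, and your ``main obstacle'' is precisely the point at which the paper's argument is silently incomplete.
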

\begin{proof}
The explicit formulation of the ratio yields 
\begin{equation}
\begin{split}
r_k=&\frac{\eta(\mu_{k+1},\sigma_{k+1})-\eta(\mu_{k},\sigma_{k})}{L_{\mu_k,\sigma_k}(\mu_{k+1},\sigma_{k+1})-L_{\mu_k,\sigma_k}(\mu_{k},\sigma_{k})}\\
=&\frac{\eta(\mu_{k+1},\sigma_{k+1})-L_{\mu_k,\sigma_k}(\mu_{k+1},\sigma_{k+1})+L_{\mu_k,\sigma_k}(\mu_{k+1},\sigma_{k+1})-\eta(\mu_{k},\sigma_{k})}{L_{\mu_k,\sigma_k}(\mu_{k+1},\sigma_{k+1})-L_{\mu_k,\sigma_k}(\mu_{k},\sigma_{k})}\\
\geq&\frac{-\frac{4\bar{A}_k\gamma\delta_k^2}{p_0^2(1-\gamma)^2}+L_{\mu_k,\sigma_k}(\mu_{k+1},\sigma_{k+1})-L_{\mu_k,\sigma_k}(\mu_{k},\sigma_{k})}{L_{\mu_k,\sigma_k}(\mu_{k+1},\sigma_{k+1})-L_{\mu_k,\sigma_k}(\mu_{k},\sigma_{k})}\\
\geq&1-\frac{4\bar{A}_k\gamma\delta_k^2}{p_0^2(1-\gamma)^2 \|g_k\|^2\min\left(\frac{1}{2h_k},\frac{\underline\sigma\sqrt{\delta_k}}{2\|g_k\|}\right)}\\
\geq&1-\frac{8\bar{A}_k\gamma\delta_k^2\max(\frac{\|g_k\|}{\underline\sigma\sqrt{\delta_k}},h_k)}{p_0^2(1-\gamma)^2\|g_k\|^2},
\end{split}
\end{equation}
where the first inequality follows from \cite[Theorem 1]{schulman2015trust} and $\eta(\mu_{k},\sigma_{k})=L_{\mu_k,\sigma_k}(\mu_{k},\sigma_{k})$, and the second inequality is derived from Lemma \ref{delta_Lest} and Assumption \ref{assum:gaussian}.
\end{proof}

Finally, we establish our main convergence result. As for the Gaussian policies, the optimal policy is supposed to be deterministic, i.e., the mean vector is the optimal action at each state and the standard deviation is zero. It implies that the best action is usually grasped by the mean parameter and the standard deviation plays a role in exploration. Hence, in pratice, we only care the learn of the mean vector and make the final standard deviation small enough. Therefore, we concentrate on the behavior of $\|\nabla_\mu\eta(\mu,\sigma)\|$.
\begin{theorem}
Suppose that Assumptions \ref{assum:gaussian}-\ref{assum:hessen} hold and $\{(\mu_k, \sigma_k)\}$ is the sequence by the trust region method. Then we have the following conclusions.
\begin{enumerate}
    \item The limit inferior of the norm of gradient with respect to $\mu$ goes to zero, i.e., 
\begin{equation*}
\underset{k \rightarrow \infty}{\lim\inf}\ \|\nabla_\mu\eta(\mu_k,\sigma_k)\|=0.
\end{equation*}
\item Suppose that the gradient $\nabla_\mu\eta(\mu,\sigma)$ is Lipschitz continuous with respect to $\mu$ and $\sigma$, respectively. If the standard deviation vector $\sigma_k$ converges to $\sigma^*$, i.e. $\lim\limits_{k \rightarrow \infty}\sigma_k = \sigma^*$, then we have 
\begin{equation*}
\lim\limits_{k \rightarrow \infty}\|\nabla_\mu\eta(\mu_k,\sigma^*)\|=0.
\end{equation*}
\end{enumerate}
\end{theorem}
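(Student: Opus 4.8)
The plan is to handle the two claims separately, relying throughout on the first-order matching \eqref{1stmatching}, which gives $g_k=\nabla_\mu L_{\mu_k,\sigma_k}(\mu_k,\sigma_k)=\nabla_\mu\eta(\mu_k,\sigma_k)$, so that Lemmas \ref{delta_Lest} and \ref{rklowerbound} are directly statements about $\|\nabla_\mu\eta(\mu_k,\sigma_k)\|$. The first claim then follows the same contradiction scheme as Theorem \ref{thm:conv-dis}, while the second combines a $\liminf$-to-$\lim$ upgrade with a Lipschitz transfer from $\sigma_k$ to $\sigma^*$.

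For the first claim I would argue by contradiction and suppose $\|g_k\|\ge\epsilon$ for some $\epsilon>0$ and all $k>K$. Since the reward is bounded, $\bar A_k=\max_{s,a}|A_{\mu_k,\sigma_k}(s,a)|$ is uniformly bounded by some $\bar A$, and $h_k\le h$ by Assumption \ref{assum:hessen}. Feeding $\|g_k\|\ge\epsilon$ into Lemma \ref{rklowerbound} and noting that for small $\delta_k$ the term $\|g_k\|/(\underline\sigma\sqrt{\delta_k})$ dominates $h_k$ in the maximum, the ratio bound reduces to $r_k\ge 1-8\bar A\gamma\delta_k^{3/2}/(p_0^2(1-\gamma)^2\underline\sigma\epsilon)$, which tends to one as $\delta_k\to0$. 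Hence there is a threshold $\bar\delta>0$ with $\delta_k\le\bar\delta\Rightarrow r_k\ge\beta_1$, so that by \eqref{updatetheta}--\eqref{updateC} such an iteration is successful and $\delta_{k+1}=\gamma_1\delta_k\ge\delta_k$; this yields the lower bound $\delta_k\ge\min(\delta_K,\gamma_3\bar\delta)=:c>0$ exactly as in \eqref{deltabound}. Because the $\sigma$-update \eqref{ep-sigma} only increases the model value, Lemma \ref{delta_Lest} bounds the full model improvement below by $\Delta:=\min(\epsilon^2/(2h),\underline\sigma\sqrt{c}\,\epsilon/2)>0$, so each successful iteration raises $\eta$ by at least $\beta_0\Delta$. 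A short argument shows there are infinitely many successful iterations (otherwise $\delta_k$ eventually shrinks by $\gamma_3$ at every step, driving $\delta_k\le\bar\delta$ and forcing a successful step), and since $\eta$ is monotone and bounded above by $R_{\max}/(1-\gamma)$, summing the increases gives a contradiction.

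For the second claim, the Lipschitz continuity in $\sigma$ together with $\sigma_k\to\sigma^*$ gives $\|\nabla_\mu\eta(\mu_k,\sigma^*)-\nabla_\mu\eta(\mu_k,\sigma_k)\|\le L_\sigma\|\sigma^*-\sigma_k\|\to0$, so it suffices to prove the full limit $\lim_k\|g_k\|=0$. I would upgrade the $\liminf$ of the first claim to a limit by contradiction: if $\limsup_k\|g_k\|=2\epsilon>0$, for each peak index $m_i$ with $\|g_{m_i}\|\ge2\epsilon$ pick the first later index $n_i$ with $\|g_{n_i}\|<\epsilon$, so $\|g_k\|\ge\epsilon$ on $[m_i,n_i)$. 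Writing $\Delta_k$ for the model improvement at step $k$, the bound $\eta_{k+1}-\eta_k\ge\beta_0\Delta_k$ on successful steps together with the boundedness of $\eta$ gives $\sum_{\text{successful }k}\Delta_k<\infty$, hence $\Delta_k\to0$; on the plateau this forces the second branch of the minimum in Lemma \ref{delta_Lest}, so $\Delta_k\ge\underline\sigma\sqrt{\delta_k}\,\epsilon/2$ and therefore $\|\mu_{k+1}-\mu_k\|\le\bar\sigma\sqrt{2\delta_k}\le C\Delta_k$ with $C=2\sqrt2\,\bar\sigma/(\underline\sigma\epsilon)$. Summing over the plateau, $\|\mu_{n_i}-\mu_{m_i}\|\le C\sum_{k\ge m_i}\Delta_k\to0$, and since $\sigma_k$ is Cauchy we also have $\|\sigma_{n_i}-\sigma_{m_i}\|\to0$; the Lipschitz continuity of $\nabla_\mu\eta$ then gives $\|g_{m_i}-g_{n_i}\|\to0$, contradicting $\|g_{m_i}-g_{n_i}\|\ge\|g_{m_i}\|-\|g_{n_i}\|\ge\epsilon$. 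Thus $\lim_k\|g_k\|=0$, and the transfer estimate finishes the proof.

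I expect the $\liminf$-to-$\lim$ upgrade in the second claim to be the main obstacle. The delicate points are to argue that on the plateau only the $\underline\sigma\sqrt{\delta_k}\|g_k\|/2$ branch of the minimum in Lemma \ref{delta_Lest} can survive (the constant branch $\|g_k\|^2/(2h)$ would keep $\Delta_k$ bounded away from zero), to convert the summability of $\Delta_k$ into control of the total displacement $\|\mu_{n_i}-\mu_{m_i}\|$ through the trust-region step bound, and to absorb the drift of $\sigma_k$ inside the gradient difference using the separate Lipschitz constant in $\sigma$. Care is also needed with the constant $\bar\sigma=\sup_k\max_i\sigma_k^{(i)}$, which is finite precisely because $\sigma_k$ converges.
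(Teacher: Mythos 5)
Your proposal is correct and follows essentially the same route as the paper: part 1 is the same contradiction via the ratio lower bound of Lemma \ref{rklowerbound} and the resulting trust-region-radius lower bound, and part 2 is the same plateau construction (indices $k_i$, $t_i$) in which the square-root branch of Lemma \ref{delta_Lest} converts the summable objective increases into control of $\|\mu_{t_i}-\mu_{k_i}\|$, followed by the Lipschitz transfer to $\sigma^*$. Your explicit treatment of the $\sigma_k$-drift inside the gradient difference (via the Cauchy property) is a small added care over the paper's write-up, not a different argument.
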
 
\begin{proof}
We prove the first statement by contradiction. Suppose that we can find $\epsilon>0$ and $K\in\mathbb{N}$ such that 
\begin{equation}\label{eq:geps}
\|\nabla_\mu\eta(\mu_k,\sigma_k)\|=\|g_k\|\geq\epsilon,\forall k> K.
\end{equation}
Combining \eqref{eq:geps} with the result in Lemma \ref{rklowerbound}, we have 
\begin{equation*}
    \begin{split}
r_k 
&\geq1-\frac{8\bar{A}_k\gamma\delta_k^2\max(\frac{\|g_k\|}{\underline\sigma\sqrt{\delta_k}},h_k)}{p_0^2(1-\gamma)^2\|g_k\|^2} \\
&\geq \min\left(1-\frac{8\bar{A}_k\gamma\delta_k^{\frac{3}{2}}}{p_0^2(1-\gamma)^2\|g_k\|\underline\sigma}, 
1-\frac{8\bar{A}_k\gamma\delta_k^2h}{p_0^2(1-\gamma)^2\|g_k\|^2}\right) \\
&\geq \min\left(1-\frac{8\bar{A}\gamma\delta_k^{\frac{3}{2}}}{p_0^2(1-\gamma)^2\epsilon\underline\sigma}, 
1-\frac{8\bar{A}\gamma\delta_k^2h}{p_0^2(1-\gamma)^2\epsilon^2}\right)
\end{split}
\end{equation*}
where $\bar{A}\geq\max\limits_{k\geq K}\bar{A}_k$. Take 
\begin{equation*}
\bar{\delta} = \min\left(\left(\frac{p_0^2(1-\beta_1)(1-\gamma)^2\bar{\sigma}\epsilon}{8\bar{A}\gamma}\right)^{\frac{2}{3}},
\left(\frac{p_0^2(1-\beta_1)(1-\gamma)^2\epsilon^2}{8\bar{A}\gamma h}\right)^{\frac{1}{2}}\right).
\end{equation*}
It follows that for each $\delta_k\leq\bar{\delta}$, the ratio $r_k\geq\beta_1$.
According to the update rule \eqref{updateC}, we have
\begin{equation}\label{delta_klowerbound}
\delta_k\geq\min(\delta_K,\gamma_2\bar{\delta}), \forall k>K.
\end{equation}
Now, we claim that $r_k\geq\beta_1$ occurs infinite many time. Otherwise, we can find $K_1>K$ such that for any $k>K_1$, $r_k<\beta_1$ and $\delta_k>\delta_{k+1}$, which conflicts with \eqref{delta_klowerbound}. Therefore, we can obtain for $\forall k>K$ and $r_k \geq \beta_0$,
\begin{equation*}
\begin{split}
\eta(\mu_{k+1},\sigma_{k+1})-\eta(\mu_{k},\sigma_{k})
&= r_{k}\left( L_{\mu_k,\sigma_k}(\mu_{k+1},\sigma_{k+1})-L_{\mu_k,\sigma_k}(\mu_{k},\sigma_{k})\right)\\
&\geq r_{k}\left( L_{\mu_k,\sigma_k}(\mu_{k+1},\sigma_{k})-L_{\mu_k,\sigma_k}(\mu_{k},\sigma_{k})\right)\\
&\geq\beta_0\|g_k\|^2\min\left(\frac{1}{2h},\frac{\underline\sigma\sqrt{\delta_k}}{2\|g_k\|}\right)\\
&\geq\beta_0\min\left(\frac{\epsilon^2}{2h},\frac{\underline\sigma\epsilon\sqrt{\delta_k}}{2}\right), \\
\end{split}
\end{equation*}
where the first inequality is due to the definition of the trust region ratio, the second inequality comes from the update rule of $\sigma$ \eqref{ep-sigma} and the third inequality is from Lemma \eqref{delta_Lest}. Since $\eta$ is continuous and upper bounded, we have $\delta_{k}\rightarrow 0$ as $k\rightarrow\infty$ which contradicts to \eqref{delta_klowerbound}.

In order to construct a contradiction for the second statement, we assume that there exists some $\epsilon>0$ such that $\|g_{k_i}\|\geq2\epsilon$, where $\{k_i\}$ is a subsequence of successful iterates. Denote the indices of all successful iterates by $\mathcal{M}$. From the first statement and its proof, we can obtain another subsequence of $\mathcal{M}$, denoted by $\{t_i\}$, where $t_i$ is the first successful iterate such that $t_i>k_i$ and $\|g_{t_i}\|\leq\epsilon$. Let $\mathcal{Q}=\{q\in\mathcal{M}|k_i\leq q<t_i, \mathrm{for\ some\ }i =1,2,...\}$. Then for any $q\in\mathcal{Q}$ we have
\begin{equation*}
\eta(\mu_{q+1},\sigma_{q+1})-\eta(\mu_{q},\sigma_{q})\geq\beta_0\min\left(\frac{\epsilon^2}{2h},\frac{\underline\sigma\epsilon\sqrt{\delta_q}}{2}\right).
\end{equation*}
Obviously, the sequence $\{\eta(\mu_{k},\sigma_{k})\}_{k \in \mathcal{M}}$ is monotonically increasing and upper bounded. Hence, we obtain 
\begin{equation*}
\lim_{\begin{subarray}{c} q\in\mathcal{Q}\\ q\rightarrow\infty\end{subarray}}\delta_q=0, \lim_{\begin{subarray}{c} q\in\mathcal{Q}\\ q\rightarrow\infty\end{subarray}}\eta(\mu_q,\sigma_q)=\eta^*.
\end{equation*}
Without loss of generality, we can assume $\frac{\underline\sigma\sqrt{\delta_q}}{2}\leq\frac{\epsilon}{2h}$ for all $q\in\mathcal{Q}$. Then for each $i$, we have

\begin{eqnarray*}
\eta^*-\eta(\mu_{k_i},\sigma_{k_i}) &\geq&
\eta(\mu_{t_i},\sigma_{t_i})-\eta(\mu_{k_i},\sigma_{k_i})\geq\sum\limits_{\begin{subarray}{c}q\in\mathcal{Q}, \\
k_i\leq q<t_i\end{subarray}}\beta_0\frac{\underline\sigma\epsilon\sqrt{\delta_q}}{2}  \\
&\geq& \frac{\beta_0\underline\sigma\epsilon }{2\bar\sigma} \sum\limits_{\begin{subarray}{c}q\in\mathcal{Q}, \\
k_i\leq q<t_i\end{subarray}} \|\mu_{q+1}-\mu_{q}\|  \geq \frac{\beta_0\underline\sigma\epsilon }{2\bar\sigma} \|\mu_{t_i}-\mu_{k_i}\|,
\end{eqnarray*}
where $\bar\sigma$ is a upper bound of each element in $\sigma_k$ which exists due to the convergence of $\sigma_k$.
Thus, we derive $\|\mu_{t_i}-\mu_{k_i}\|\rightarrow0$, and consequently $\|g_{t_i}-g_{k_i}\|\rightarrow0$, which yields a contradiction to $\|g_{t_i} - g_{k_i}\| \geq \epsilon$ because of  the definitions of $\{k_i\}$ and $\{t_i\}$.
It concludes that 
\[
\lim\limits_{k \rightarrow \infty}\|\nabla_\mu\eta(\mu_k,\sigma_k)\|=0.
\]
Then we have
\[
\|\nabla_\mu\eta(\mu_k,\sigma^*)\| \leq \|\nabla_\mu\eta(\mu_k,\sigma^*) - \nabla_\mu\eta(\mu_k,\sigma_k)\| + \|\nabla_\mu\eta(\mu_k,\sigma_k)\| \rightarrow 0,
 \]
 where the first term goes to zero due to the convergence of $\sigma_k$ and the Lipschitz continuity of $\nabla \eta$.
\end{proof}

\section{A Stochastic Trust Region Algorithm}\label{Empirical Algorithm}
The functions discussed before are constructed with expectations and they have to be estimated using sample averages in practice. To be consistent with the notations above, we add a hat to represent the corresponding estimated value, such as $\hat{\eta}$, $\hat{L}_{\theta_k}$ and so on. Essentially, the optimization in Algorithm \ref{algo-expectation-form} is constructed with these estimated functions. In consideration of the uncertainty and fluctuations caused by the samples, we develop a stochastic version of Algorithm \ref{algo-expectation-form} in the following discussion.
\subsection{Sample Collection and Advantage Estimation}
The original objective function $\eta(\theta)$ denotes the expectation of the cumulative discounted rewards over a trajectory generated by the policy $\pi_{\theta}$. Practically, we simulate $N$ interactions with the environment from policy $\pi_{\theta_{k}}$ to generate $\tau$ complete trajectories whose total rewards are denoted as $\{R_{i}\}_{i=1}^{\tau}$, using Algorithm \ref{sample}. All visited states and actions during the interactions are gathered into a set of sample pairs $B_{\theta_k}=\{(s_i,a_i)\}_{i=1}^N$ for estimation. The averaged total reward of these trajectories: 
\begin{equation}
\hat{\eta}(\theta_{k}) = \frac{1}{\tau}\sum_{i=1}^{\tau}R_{i}
\end{equation}
is called as an empirical estimation of $\eta(\theta_{k})$. The sample standard deviation
\begin{equation}
\hat{\sigma}_{\eta}(\theta_{k})=\sqrt{\frac{1}{\tau-1}\sum_{i=1}^{\tau}(R_{i}-\hat{\eta}(\theta_{k}))^{2}}
\end{equation}
characterizes the sample fluctuation near the mean value.  The expectations can be approximated with respect to the samples as follows:
\begin{equation}\label{empirform}
\begin{split}
\hat{L}_{\theta_k}(\theta,B_{\theta_k})=&\hat{\eta}(\theta_{k}) +\frac{1}{|B_{\theta_k}|}\sum\limits_{(s,a)\in B_{\theta_k}}\frac{\pi_{\theta}(a|s)}{\pi_{\theta_{k}}(a|s)}\hat{A}_{\theta_{k}}(s,a),\\
 \hat{g}_k(\theta,B_{\theta_k})=&\frac{1}{|B_{\theta_k}|}\sum\limits_{(s,a)\in B_{\theta_k}}\frac{\nabla\pi_{\theta}(a|s)}{\pi_{\theta_{k}}(a|s)}\hat{A}_{\theta_{k}}(s,a),\\
\hat{D}_{k}(\theta,B_{\theta_k})=& \frac{1}{|B_{\theta_k}|}\sum\limits_{(s,a)\in B_{\theta_k}}D(\pi_{\theta_{k}}(\cdot|s),\pi_{\theta}(\cdot|s)),
\end{split}
\end{equation}
where the advantage estimator $\hat{A}_{\theta_{k}}$ is constructed using the empirical reward and the value network. We take a version of GAE as: 
\begin{equation}\label{advancomput}
\hat{A}_{\theta_k}(s, a)=r(s,a)+ \gamma V_{\phi_{k}}(s')-V_{\phi_{k}}(s) +\gamma\lambda\hat{A}_{\theta_k}(s', a'),
\end{equation}
where $V_\phi$ is the state value function parameterized by $\phi$ and $s'$ is the state after the transfer from $s$ and $a$. The hyper-parameter $\lambda$ controls the trade-off between bias and variance. The value network is trained by minimizing the error between the target value and the predicted value:
 \begin{equation}\label{critic-prob}
\phi_{k+1} = \mathop{\arg\min}\limits_{\phi}\frac{1}{| B_{\theta_k}|}\sum\limits_{(s,a)\in B_{\theta_k}}\rVert V_{\phi_{k}}(s)+\hat{A}_{\theta_k}(s,a)- V_{\phi}(s) \rVert^2.
 \end{equation} 
 Fitting such a value network as the baseline function is the same as the other policy-based methods in deep reinforcement learning \cite{ilyas2018deep}.

\begin{algorithm}[tbp]
		\caption{SAMPLE($\theta$,$N$)}
		\label{sample}
		\begin{algorithmic}[1]
\STATE $B_{\theta}=\emptyset$; $\tau = 0$, $R_{\tau} = 0,t=0$; randomly initiate the state $s_0$;
			\FOR{$i=1,\cdots,N$}
			\IF {$s_t$ is a terminal state}
			\STATE $\tau = \tau+1$; $R_{\tau} = 0,t=0$; randomly initiate the state $s_0$;
			\ELSE 
			\STATE perform one step at $s_t$ using $\pi_{\theta}(\cdot|s_t)$ and obtain reward $r_{t}:=r(s_{t},a_{t},s_{t+1})$;
			\STATE $R_{\tau} = R_{\tau}+\gamma^{t}r_{t}$;
			\STATE $t=t+1$;
			\STATE $B_{\theta}\leftarrow B_{\theta}\cup (s_{t},a_{t})$;
			%\STATE $\tau_i \leftarrow  (\tau_i,a_{t},s_{t+1})$;
			\ENDIF
		\ENDFOR	
		%\STATE compute advantage function with  \eqref{advancomput}
		\STATE $\hat{\eta}(\theta)= \frac{1}{\tau}\sum_{j=1}^{\tau}R_{j}$;
	\RETURN $B_{\theta}$		
		\end{algorithmic}
	\end{algorithm}

\subsection{Solving the Trust Region Subproblem}\label{subsect}
Empirically, the trust region model for policy optimization  at the $k$-th iteration is 
\begin{equation}\label{empiricalobj}
\max_{\theta}\quad\hat{L}_{\theta_k}(\theta,B_{\theta_k}),\ \mathrm{s.t.\ } \hat{D}_{k}(\theta,B_{\theta_k})\leq\delta_k.
\end{equation}
It is worth noting that the policy distance function in \eqref{empiricalobj} can be approximately treated as a re-weighted matrix norm on parameters. In other words, it holds for any metric function $D$ and sample set $B$:
\begin{equation*}
\hat{D}_{k}(\theta,B)\approx \frac{1}{2}(\theta-\theta_k)^T\nabla^2\hat{D}_{k}(\theta_k,B)(\theta-\theta_k),
\end{equation*}
since  $\hat{D}_{k}(\theta,B)\geq0$, $\nabla^2\hat{D}_{k}(\theta_k,B)$ is semi-positive definite, and the first-order Taylor expansion at $\theta=\theta_k$ is equal to zero. %$ \hat{D}_{k}(\theta_k,B)=0$ and $\nabla \hat{D}_{k}(\theta_k,B)=0$ vanish, and $\nabla^2\hat{D}_{k}(\theta_k,B)$ is semi-positive definite. 

We now present a  feasible stochastic  method for solving \eqref{empiricalobj} by sequentially generating increasing directions using the CG algorithm. The  process starts from $\theta_{k,1}=\theta_k$. At the $l$-th inner iteration,  we randomly sample a subset $b_l\subseteq B_{\theta_k}$  to obtain $\hat{g}_k(\theta_{k,l},b_l)$ and $\hat{H}_k(b_l)=\nabla^2\hat{D}_{k}(\theta_k,b_l)$, 
and  compute the direction
\[ d_l=\hat{H}_k^{-1}(b_l)\hat{g}_k(\theta_{k,l},b_l).\]
Then  we update the parameter as
\begin{equation} \theta_{k,l+1} =\theta_{k,l}+\alpha d_l
\end{equation}
such that the following two conditions are satisfied:
\begin{equation} \label{lineserach-cond}
\begin{split}
\hat{L}_{\theta_{k}}(\theta_{k,l+1},b_l)\geq \hat{L}_{\theta_{k}}(\theta_{k,l},b_l)+ \tau d_l ^T\hat{g}_{k}(\theta_{k,l},b_l),\mathrm{and}\ \hat{D}_k(\theta_{k,l+1},B_{\theta_k}) \leq \delta_k,
\end{split}
\end{equation}
where $\tau\in(0,1)$. The conditions in \eqref{lineserach-cond} ensure an improvement of $\hat{L}_{\theta_k}$ on $b_l$ and the constraint in \eqref{empiricalobj} holds at each step. 

The theoretical analysis in Lemma \ref{Limprove} suggests that the improvement of $L_{\theta_k}$ is supposed to be larger than $(1-\gamma)\delta_k A_{\theta_k}^*$. However, it is unaccessible in practice. Empirically, we terminate the iteration and denote the trial point $\tilde{\theta}_{k+1}:=\theta_{k,l+1}$ once the following conditions hold:
\begin{equation}\label{emp-innerterminate}
\frac{|\hat{L}_{\theta_{k}}(\theta_{k,l+1},B_{\theta_k})-\hat{L}_{\theta_{k}}(\theta_{k,l},B_{\theta_k})|}{1+\left|\hat{L}_{\theta_{k}}(\theta_{k,l},B_{\theta_k})\right|}\leq\epsilon\mathrm{\ or\ }  \frac{\left|\mathrm{Ent}(\theta_{k,l+1},B_{\theta_k})-\mathrm{Ent}(\theta_{k},B_{\theta_k})\right|}{1+\left|\mathrm{Ent}(\theta_{k},B_{\theta_k})\right|}\geq\epsilon, 
\end{equation}
where $\epsilon>0$ is a small constant and $\mathrm{Ent}(\theta_k, B_{\theta_k})$ is the sample-averaged entropy of the policy $\pi_{\theta_k}$, i.e., 
\begin{equation*}
\mathrm{Ent}(\theta_k, B_{\theta_k})=\frac{1}{|B_{\theta_k}|}\sum\limits_{s\in B_{\theta_k}}\mathcal{H}(\pi_{\theta_k}(\cdot|s))\mathrm{\ and\ } \mathcal{H}(p) = -\int p(x)\log p(x)dx .
\end{equation*}
The entropy characterizes the randomness and the exploration ability of the policy. Under the random circumstances, the policy inadvertently collapses on the best observed action with respect to the estimated advantage value which may be significantly wrong. Although the distance constraint recedes this effect to some extent, the policy is driven to favor the best actions it has visited, albeit slowly \cite{abdolmaleki2018relative}. Based on these concerns, we take the second condition in \eqref{emp-innerterminate} to prevent excessively dependency on the advantage estimators and limited observations. 
The pseudocode for solving \eqref{empiricalobj} is outlined in Algorithm \ref{multi-trpo}.

\begin{algorithm}[tbp]
		\caption{InnerSolu($\theta_k$,$\delta_k$,$B_{\theta_k}$,$\xi$,$\tau$)}
		\label{multi-trpo}
		\begin{algorithmic}[1]
\STATE $\theta_{k,1} = \theta_k$;
		\FOR{$l=1,2,...$}
			\STATE randomly sample data $ b_l\subseteq B_{\theta_k}$;
			\STATE compute $\hat{g}_k(\theta_{k,l},b_l)$ and $\hat{H}_k(b_l)$ by \eqref{empirform};
			\STATE compute $d_l=\hat{H}_k^{-1}(b_l)\hat{g}_k(\theta_{k,l},b_l)$ using the CG algorithm;			
			\STATE set $\theta_{k,l+1} = \theta_{k,l}+\alpha d_l$ such that \eqref{lineserach-cond} holds;
\IF{$l\ \%\ l_0=0$}
\STATE If \eqref{emp-innerterminate} holds, break;
\ENDIF
\ENDFOR
	
\RETURN $\tilde{\theta}_{k+1}=\theta_{k,l+1}$	
		\end{algorithmic}
	\end{algorithm}

\subsection{Alternating Strategy For Gaussian Policy}	\label{Separating}
For continuous action spaces, the multivariate Gaussian distribution is adopted in many cases. The neural network whose weight is denoted as $\theta^{\mu}$ maps from state to the mean of the Gaussian distribution, $\mu(s;\theta^\mu)$. The covariance matrix is assumed to be a diagonal matrix. Namely, we take a state-independent vector $\theta^{\sigma}$ to represent the log-standard deviations instead of the standard deviations, since the log-standard deviations are free to take values from $(-\infty,\infty)$. Therefore, the policy is characterized by the Gaussian distribution 
\begin{equation*}
\pi_{\theta}(\cdot|s)\sim N(\mu(s;\theta^\mu), \mathrm{diag}(\exp(2\theta^{\sigma}))),\forall s\in\mathcal{S},
\end{equation*}
where $\theta = (\theta^\mu,\theta^{\sigma})$. It can be verified that the action can be characterized as: 
\begin{equation}\label{sampleaction}
a = \mu(s;\theta^\mu)+ \exp(\theta^{\sigma})\odot \varepsilon, \varepsilon\sim N(0,I),
\end{equation}
where $\odot$ is the element-wise product of two vectors. Ideally, the optimal Gaussian policy is supposed to be deterministic, that is, the mean is the optimal action and the covariance is zero. 

As shown in \eqref{sampleaction}, the covariance term affects the exploration of the samples as well as the quality of the optimization. An optimistic situation is that the log-standard deviations gradually decreases with the iterative process and is finally controlled by a lower bound, since extremely small covariance is undesirable for numerical stability in practice. In order to reduce the interactions between the mean and log-standard deviations in optimization and regulate the decay of the log-standard deviations more precisely, we can approximately solve the trust region problem \eqref{empiricalobj} by alternating the update with respect to the mean and log-standard deviations independently. For the mean parameter, we formulate the trust region model as follows:
\begin{equation}\label{sep-gaussian}
\max_{\theta^\mu} \ \hat{L}_{\theta_k}\left(\theta = (\theta^\mu,\theta_{k}^\sigma), B_{\theta_k}\right),\ \mathrm{s.t.\ } \hat{D}_k(\theta, B_{\theta_k})\leq \delta_k,
\end{equation}
As we assumed in the theoretical analysis, the update of the log-standard deviations is supposed to monotonically improve $L_{\theta_k}$ and ensure that the new policy is close to the current policy. For simplicity, we can update $\theta^\sigma$ by approximately solving
\begin{equation}\label{sigma-general}
\max_{\theta^\sigma} \ \hat{L}_{\theta_k}\left(\theta = (\tilde{\theta}_{k+1}^\mu,\theta^\sigma), B_{\theta_k}\right),\ \mathrm{s.t.\ } \|\theta^{\sigma}_k-\theta^\sigma\|_{\infty}\leq\bar{A}_k,
\end{equation}
where $\tilde{\theta}_{k+1}^\mu$ is the solution of \eqref{sep-gaussian}. Then we denote the solution of \eqref{sigma-general} as $\tilde{\theta}_{k+1}^\sigma$ and the trial point $\tilde{\theta}_{k+1}=(\tilde{\theta}_{k+1}^\mu,\tilde{\theta}_{k+1}^\sigma)$.

Essentially, the infinite norm constraint controls the distance between two Gaussian policies and plays a similar role in regulating the entropy loss for the Gaussian policy since 
\begin{equation*}
\mathrm{Ent}(\theta_k,B_{\theta_k})=1^T\theta_k^\sigma+\frac{n}{2}(1+\log2\pi),
\end{equation*}
 where $n$ is the dimension of the action space. We can take the bound $\bar{A}_k$ related to the entropy of the current policy, i.e., $\epsilon_{k}\propto |\mathrm{Ent}(\theta_k, B_{\theta_k})|$. The subproblem \eqref{sep-gaussian} is solved by following the process in Algorithm \ref{multi-trpo}, while the solution of the subproblem \eqref{sigma-general} is approximated using the projected gradient method. In our experiments, we find that the alternative update for the Gaussian policy is helpful in releasing the log-standard deviations from the extreme decline caused by the estimations and encouraging the mean parameter to update toward the accurate optimal actions.

\subsection{Algorithmic Development}	
According to the Algorithm \ref{algo-expectation-form}, if the iteration is successful, we exploit the samples for testing $\tilde{\theta}_{k+1}$ in the next iteration without additional simulations. However, once a rejection of $\tilde{\theta}_{k+1}$ arises, the policy remains unchanged, i.e., $\theta_{k+1}=\theta_k$. Apparently, the samples in $B_{\theta_k}$ can be used to construct the trust region model \eqref{empiricalobj} in the next iteration. In such a situation, apart from updating the trust region radius $\delta_{k}$ using \eqref{updateC}, we simulate another $N$ sample pairs from $\pi_{\theta_{k+1}}$ and merge it with $B_{\theta_k}$ to be $B_{\theta_{k+1}}$. Thereby after a rejection,  the sample size is enlarged and the estimations are supposed to be more accurate. Ideally, when the sample size is sufficiently large and the trust region gets small enough, an acceptance is very likely to occur with high probability. In consideration of data storage, once the sample size reaches an upper limit $N_{\max}$ after several consecutive rejections, a mandatory acceptance is enforced such that the best performed policy among the last few rejected iterations is taken as the next iteration:
\begin{equation}\label{forceaccept}
(\theta_{k+1},B_{\theta_{k+1}}) = \mathop{\arg\max}\limits_{(z,T_z)\in H}\ \hat{\eta}(z),
\end{equation}
where $H$ is the set of rejected iterations and $T_z$ is the sample set with respect to the policy $\pi_{z}$.

Undesirably, the landscape of $\eta$ becomes much more noisy in low-sampled regime as the training progresses \cite{ilyas2018deep}. More seriously,  the total expected rewards $\eta(\theta_{k})$ and $\eta(\tilde{\theta}_{k+1})$ are estimated from different sample sets, which is different from that of the general deep learning problems. These challenges make the estimations to be accompanied with large variance. Therefore, we take the sample standard deviation into the ratio \eqref{ratio} for empirical stability:
\begin{equation}\label{ratio-empir}
r_{k} = \frac{\hat{\eta}(\tilde{\theta}_{k+1})-\hat{\eta}(\theta_{k})}{\hat{\sigma}_{\eta}(\theta_{k})+\hat{L}_{\theta_k}(\tilde{\theta}_{k+1},B_k)-\hat{L}_{\theta_k}(\theta_{k},B_k)}.
\end{equation}
As a generalization of the standard ratio in trust region methods, this revision checks the agreement between the objective function and surrogate function in a confidence interval. 

In view of the oscillations under sampling, a small negative ratio is permitted in acceptance. Namely the update criterion for $\theta_{k+1}$ is modified from \eqref{updatetheta} as:
\begin{equation}\label{updatetheta-empir}
\theta_{k+1}=\begin{cases}\tilde{\theta}_{k+1},&\mbox{}\  r_{k}\geq0>\beta_0,\\
\theta_{k},&\mbox{}\ r_{k}\leq\beta_0\ \mathrm{and\ }|B_{\theta_k}|< N_{\max},\\
\hat{\theta}_{k+1},&\mbox{}\ \mathrm{otherwise.}
\end{cases}
\end{equation}
We select the trust region radius depending linearly on the norm of the gradient \cite{wang2019stochastic}, i.e., $\delta_k =\mu_k \|g_{k}(\theta_{k})\|$. The adjustment of the coefficient $\mu_{k}$ is based on the ratio as:
\begin{equation}\label{updateC-empir}
\mu_{k+1}=\begin{cases}\min(\gamma_1\mu_{k},\mu_{\max}),&\mbox{}\  r_{k}\geq\beta_1,\\
\max(\gamma_2\mu_{k},\mu_{\min}),&\mbox{}\  r_{k}\in[\beta_0,\beta_1),\\
\max(\gamma_3\mu_{k},\mu_{\min}),&\mbox{}\ \mathrm{otherwise,}
\end{cases}
\end{equation}
where $\beta_1\geq0>\beta_0$ and $\mu_{\max}>\mu_{\min}>0$. The empirical algorithm is summarized in Algorithm \ref{sprlalgo}.
\begin{algorithm}[tbp]
		\caption{STRO}
		\label{sprlalgo}
		\begin{algorithmic}[1]
			\REQUIRE Set $\mu_{0}$, $\theta_0$, $\phi_0$, $N$, $N_{max}$, $\sigma$, $\tau$.
\STATE $k=0$, $B_{\theta_k}=\mathrm{SAMPLE}(\theta_{k},N)$, $H =\emptyset$; 
			\WHILE{$\mathrm{stopping\ criterion\ not\ met}$}	
			\STATE compute a trial point $\tilde{\theta}_{k+1} = \mathrm{InnerSolu}(\theta_k,\delta_k,B_{\theta_k},\sigma,\tau)$;
			 \STATE $T_{\tilde{\theta}_{k+1}} = \mathrm{SAMPLE}(\tilde{\theta}_{k+1},N)$;
			\STATE compute the ratio $r_{k}$ via \eqref{ratio-empir};
				\STATE  update $\mu_{k+1}$ by \eqref{updateC-empir};
		\IF{$r_k\geq\beta_0$}
		\STATE $\theta_{k+1}=\tilde{\theta}_{k+1}$ and $B_{\theta_{k+1}}=T_{\tilde{\theta}_{k+1}}$;  
		\ELSIF{$|S_{k}|< N_{\max}$}
		\STATE $\theta_{k+1}=\theta_{k}$ and $B_{\theta_{k+1}}=\mathrm{SAMPLE}(\theta_{k+1},N)\cup B_{\theta_k}$;
		\STATE   $H=H\cup\{ (\tilde{\theta}_{k+1},T_{\tilde{\theta}_{k+1}})\}$; 
		\ELSE
			\STATE update $(\theta_{k+1},B_{\theta_{k+1}})$ by \eqref{forceaccept}; $H=H\setminus\{(\theta_{k+1},B_{\theta_{k+1}})\}$;
\ENDIF
 \STATE update $\phi_k$ by \eqref{critic-prob};
		 \STATE $k=k+1$;	
			\ENDWHILE
\end{algorithmic}
	\end{algorithm}

\section{Experiments}\label{Experiments}
To investigate the effectiveness and robustness of our stochastic trust region method for policy optimization among the state-of-the-art deep reinforcement learning algorithms, we make a comprehensive comparison using OpenAI's Baselines \cite{baselines} and Spinningup\footnote{An educational resource produced by OpenAI, \url{https://spinningup.openai.com}. }. The discrepancy between the implementations in these two repositories are primarily in the preprocessing of the environments and the structure of the networks. In Baselines, some additional wrappers are included for normalizing the environment informations and the size of the networks varies for the algorithms, while in Spinningup the observations and the reward function are taken from the environments directly and the acquiescent network architecture is shared among the methods. As an on-policy algorithm, we compare our method with TRPO and PPO on a range of continuous control tasks and Atari game playings using the benchmark suite OpenAI Gym. In order to facilitate comparisons, we take the distance function in our method as the KL divergence since it is differentiable and is commonly used. We should point out that  there is a gap between the implemented PPO in Baselines and the theoretical algorithm in \cite{schulman2017proximal}. Ilyas et al. \cite{ilyas2018deep} state that,  the optimizations which are not part of the core algorithm develop the practical success of PPO to a large extent and the theoretical framework might be negligible in practice. To study the learning capability of the policy-based and values-based methods, we take a comparison with DDPG, TD3, and soft actor-critic (SAC) method \cite{haarnoja2018soft} which is an energy-based off-policy algorithm. They are known to perform well in continuous controls at the expense of large interactions and long training time. Since these methods are incapable to handle discrete problems directly, we only compare with them in continuous tasks.

\subsection{Continuous Controls}
We test nine representative robotic locomotion experiments using the simulator MuJoCo\footnote{\url{http://www.mujoco.org}} in Gym. The problem is simulating a robot to win highest returns with fluent and safe movements. The unknown dynamics, non-smooth reward shape and the high dimensionality make the problems being challenging.

For these continuous tasks, we use the Gaussian distribution to characterize the conditional probability $\pi_{\theta}(a|s)$. As we described in section \ref{Separating},  the policy is defined by the normal distribution $N(\mu(s;\theta^\mu), \mathrm{diag}(\exp(2\theta^{\sigma})))$. The mean function $\mu(\cdot;\theta^\mu)$ is parameterized by a neural network with $tanh$ units. Moreover, for the value network where the parameter is denoted as $\phi$, we use the same architecture as the mean function except that the dimension of the last layer is one. We update the parameter $\theta=[\theta^\mu,\theta^\sigma]$ by the alternative models in \eqref{sep-gaussian} and \eqref{sigma-general}, and train the value network using the Adam \cite{kingma2014adam} method simultaneously.

During our experiments, we take $N=2048$ for each simulation and set the initial trust region coefficient $\mu_0=0.05$, the adjustment factors $\gamma_1=2$, $\gamma_2=0.8$, $\gamma_3 = 0.6$, $\mu_{\max}=0.1$ and $\mu_{\min}=0.01$. Empirically, the trial point is accepted once $\eta(\tilde{\theta}_{k+1})>\eta(\theta_k)$, i.e., $\beta_1=0$. The compulsive acceptance takes place after four consecutive rejections. FIG \ref{fig:compare} presents the training curves over these environments in Baselines. Each task is run for one million time steps over five random seeds of the network initialization and Gym simulator. The horizontal and vertical axes are training time steps and the empirical total expected rewards, respectively. The solid curves represent the mean values of five independent simulations and the shaded areas correspond to the standard deviation. TABLE \ref{maxcompare} summaries the maximal averaged return and a single standard deviation over five trials. A higher maximal average return indicates that the algorithm has ability to capture the better agent. We find that our method outperforms or matches TRPO and PPO in almost the same amount of time over all experimented environments. 	
\begin{figure}[htb]
		\centering
		\subfloat[]{\label{fig:compare_1}\includegraphics[width=1.8in]{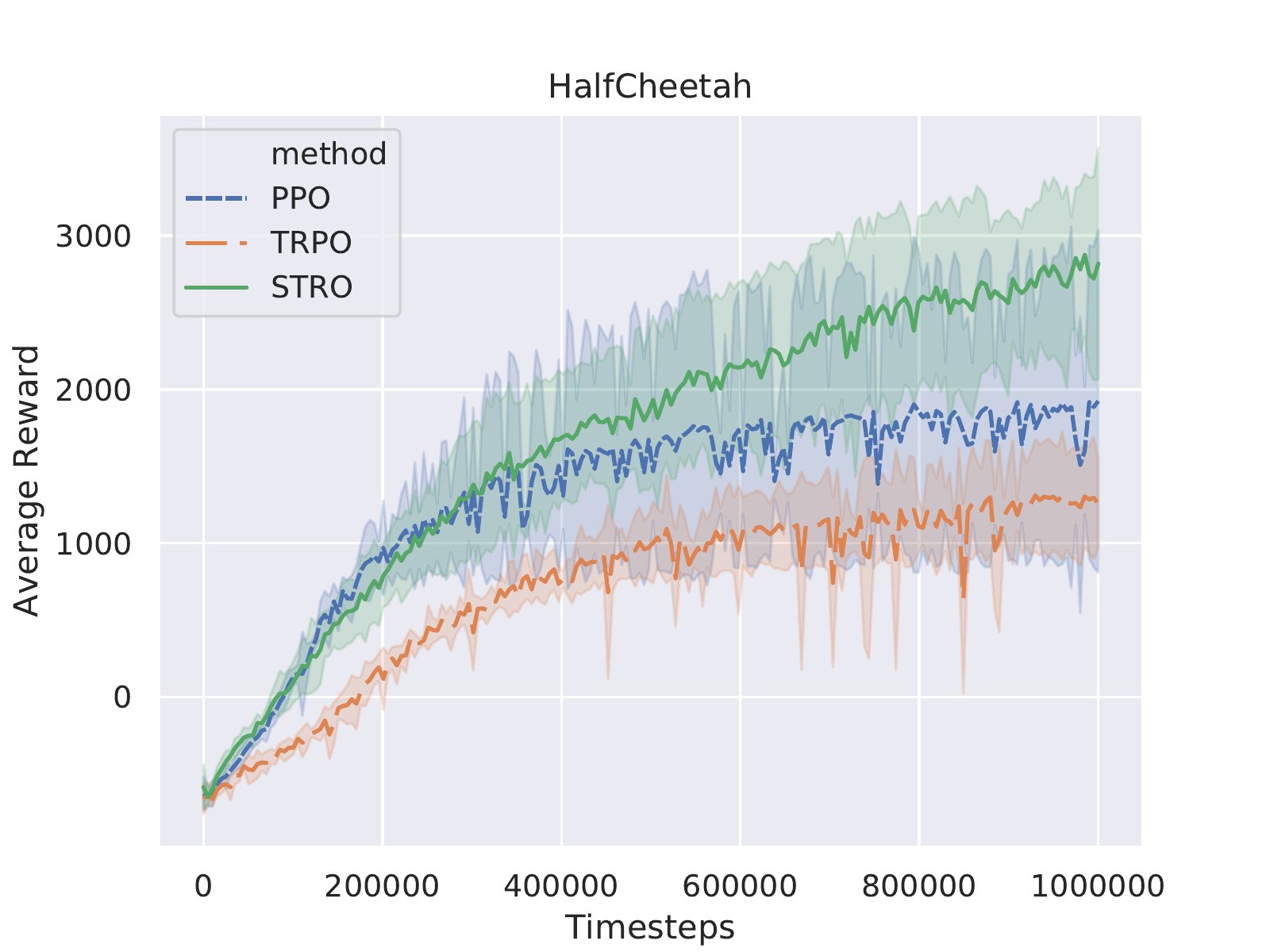}}
		\subfloat[]{\label{fig:compare_2}\includegraphics[width=1.8in]{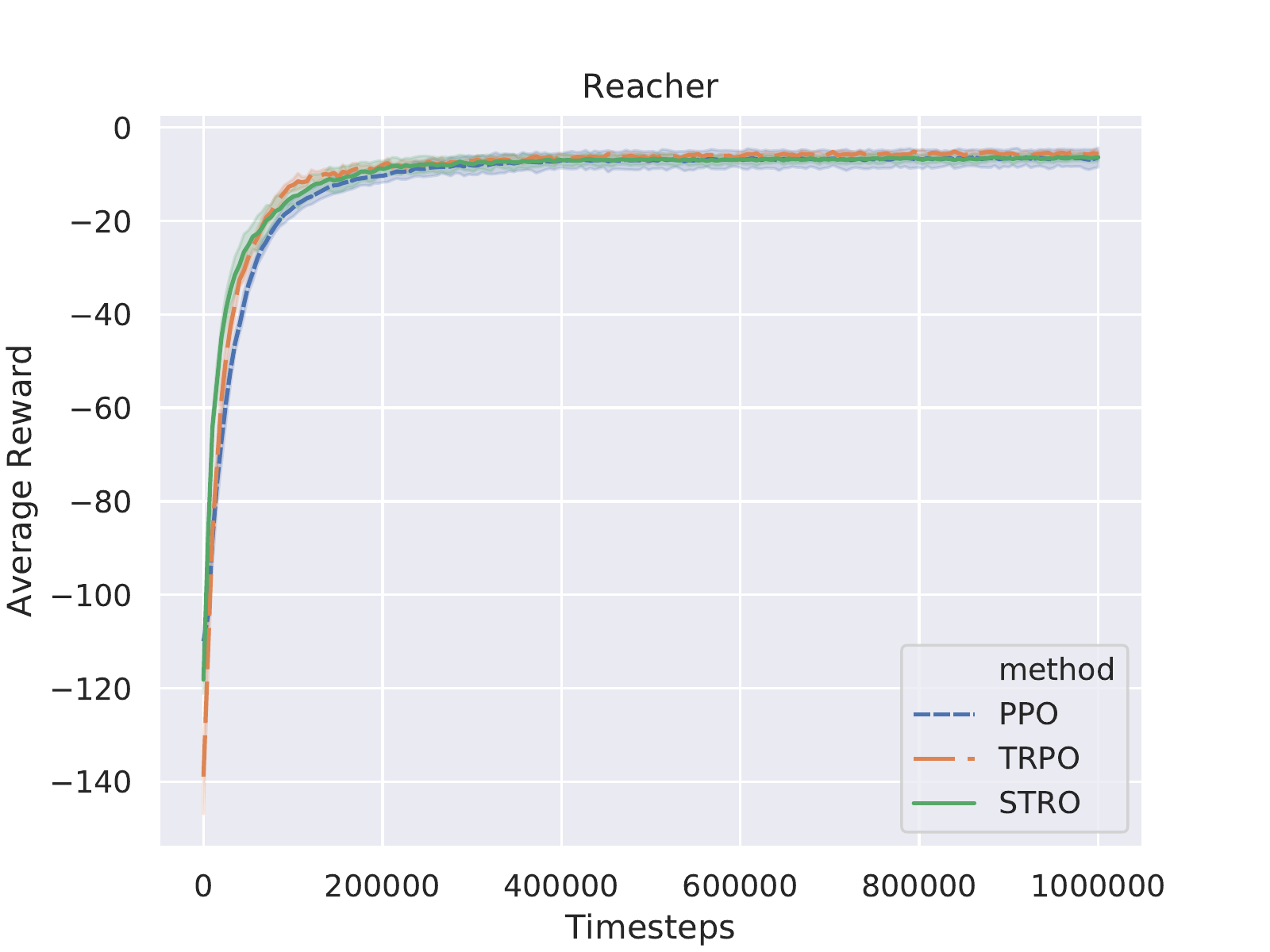}}
		\subfloat[]{\label{fig:compare_3}\includegraphics[width=1.8in]{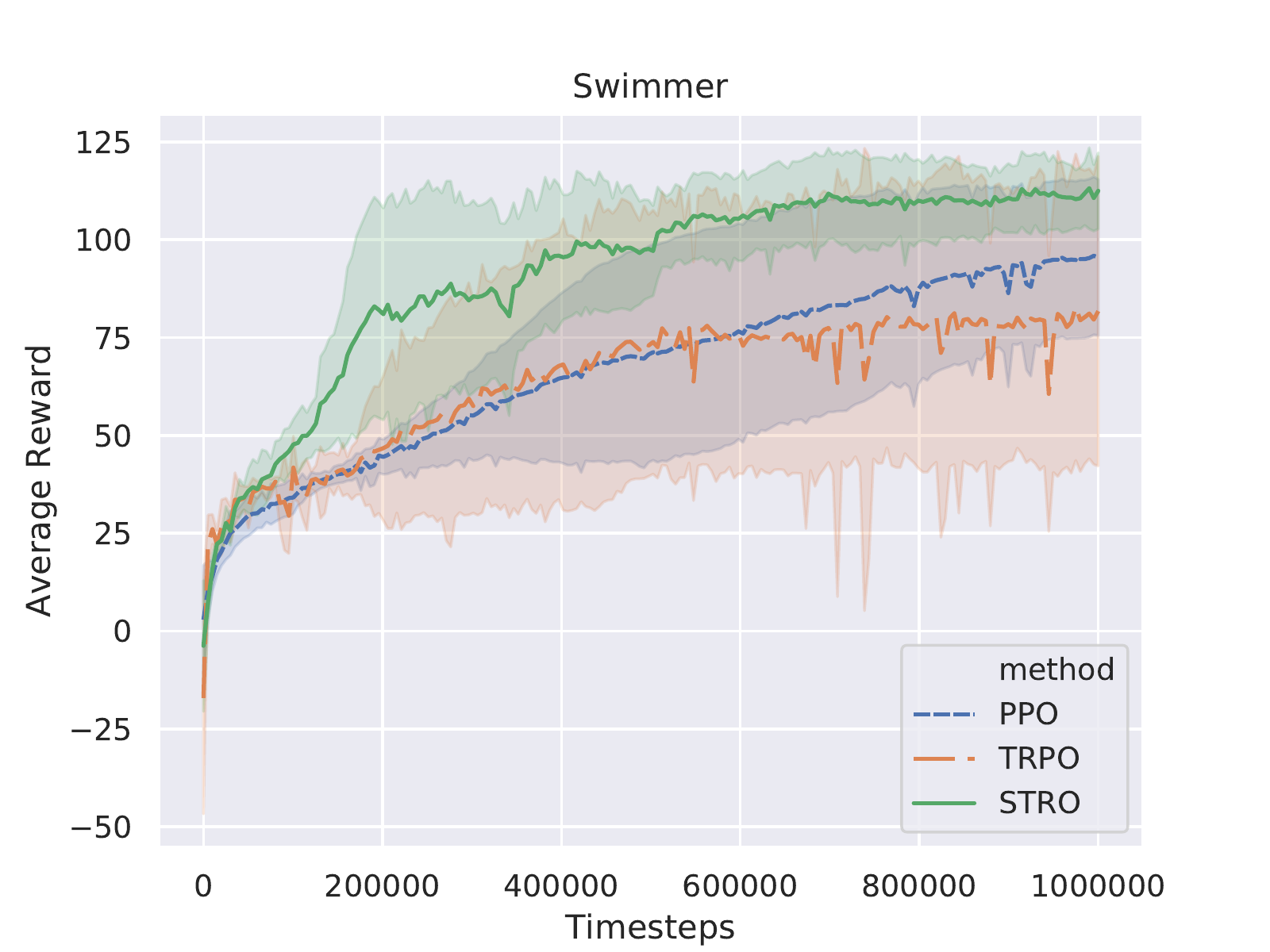}}
		\\
		\subfloat[]{\label{fig:compare_4}\includegraphics[width=1.8in]{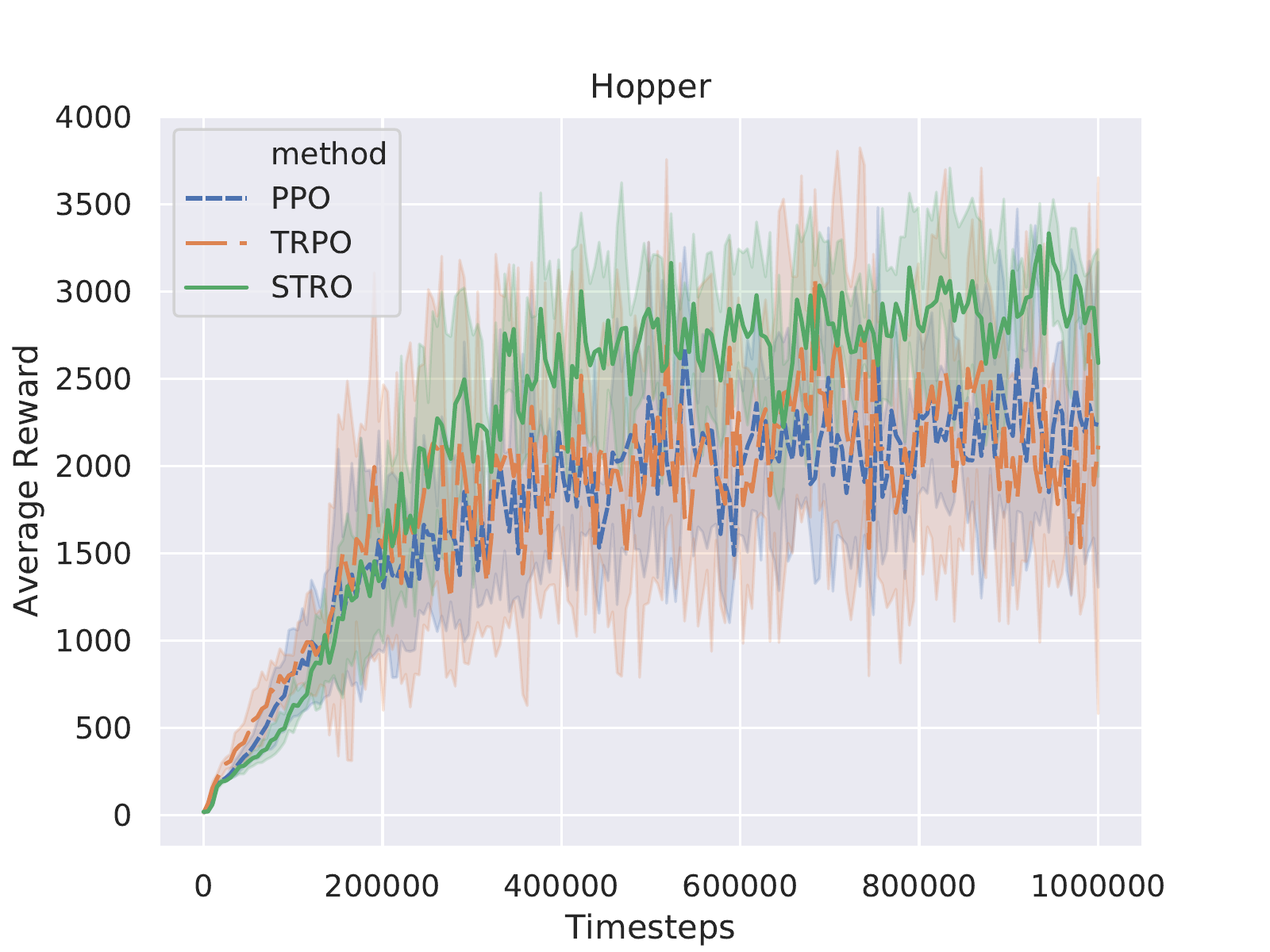}}
		\subfloat[]{\label{fig:compare_5}\includegraphics[width=1.8in]{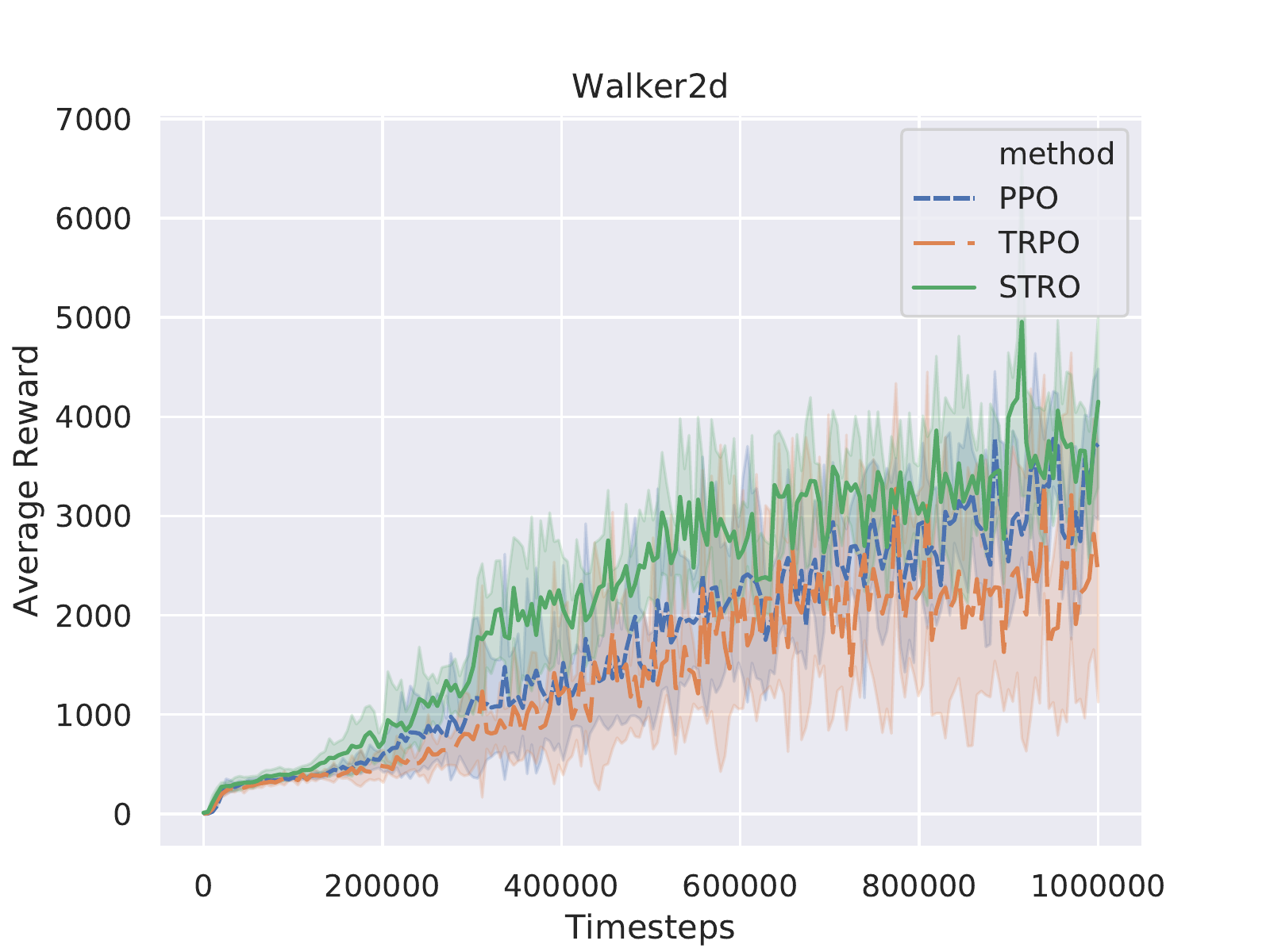}}
		\subfloat[]{\label{fig:compare_6}\includegraphics[width=1.8in]{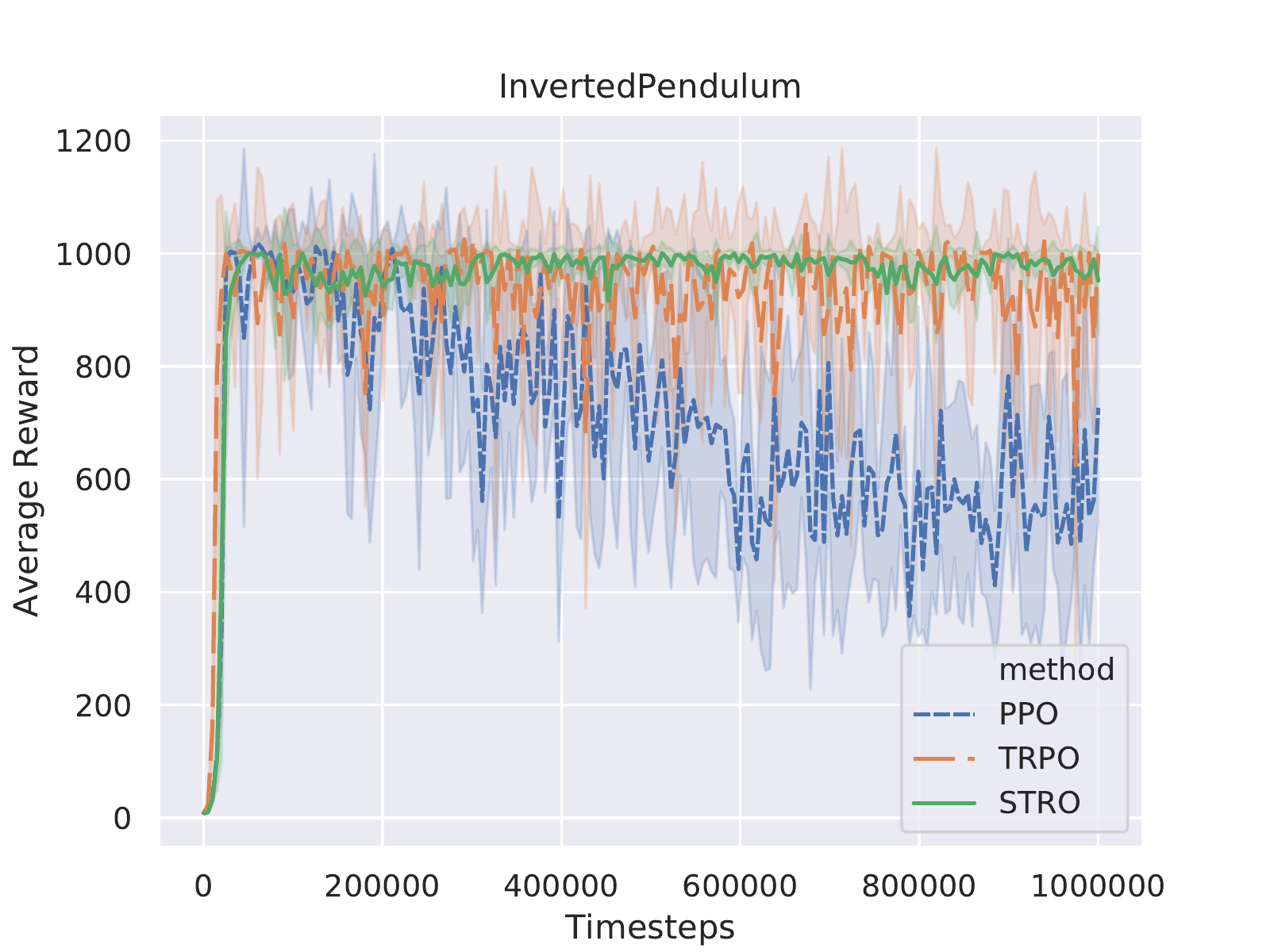}}
		\\
		\subfloat[]{\label{fig:compare_7}\includegraphics[width=1.8in]{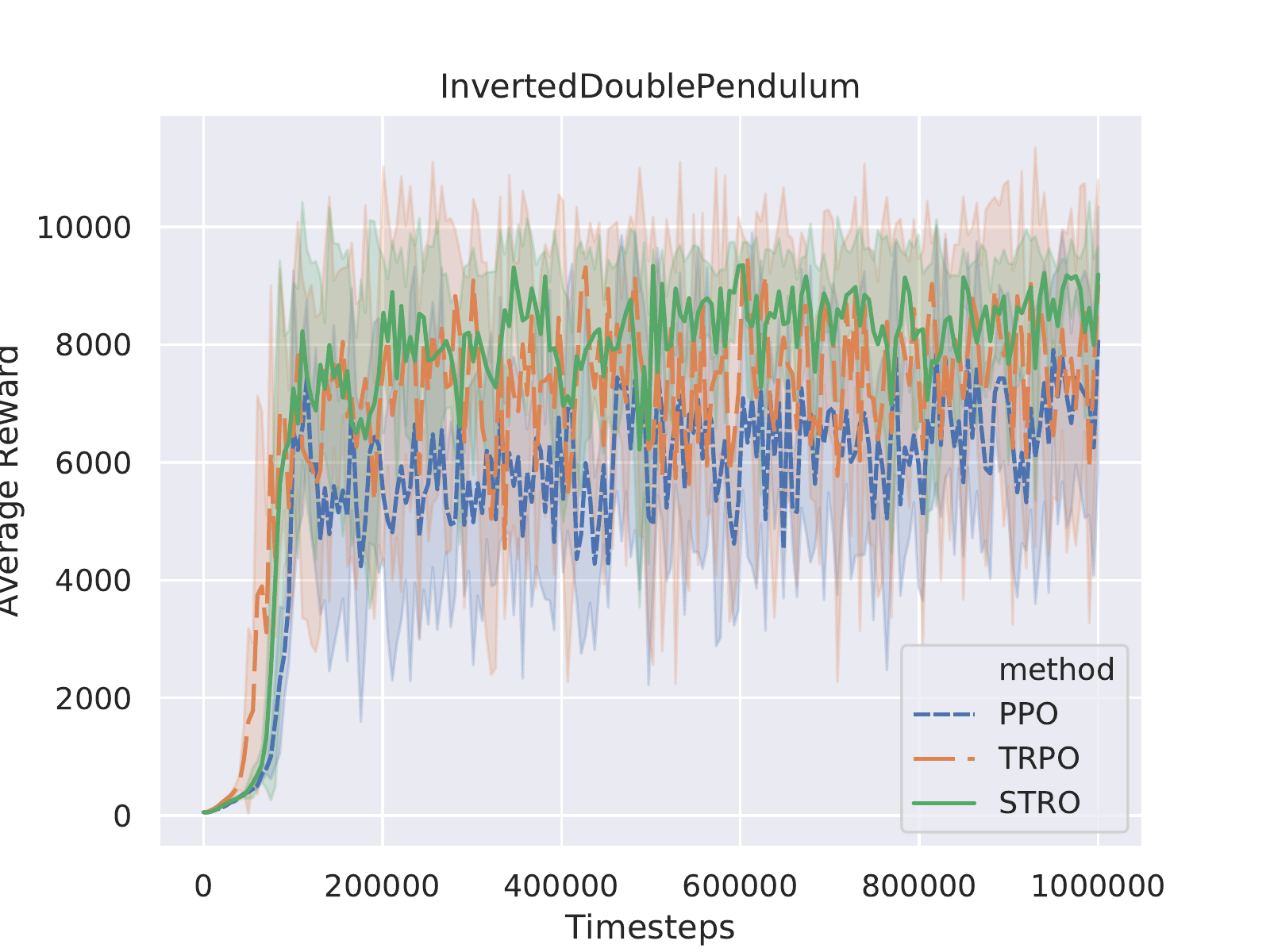}}
		\subfloat[]{\label{fig:compare_8}\includegraphics[width=1.8in]{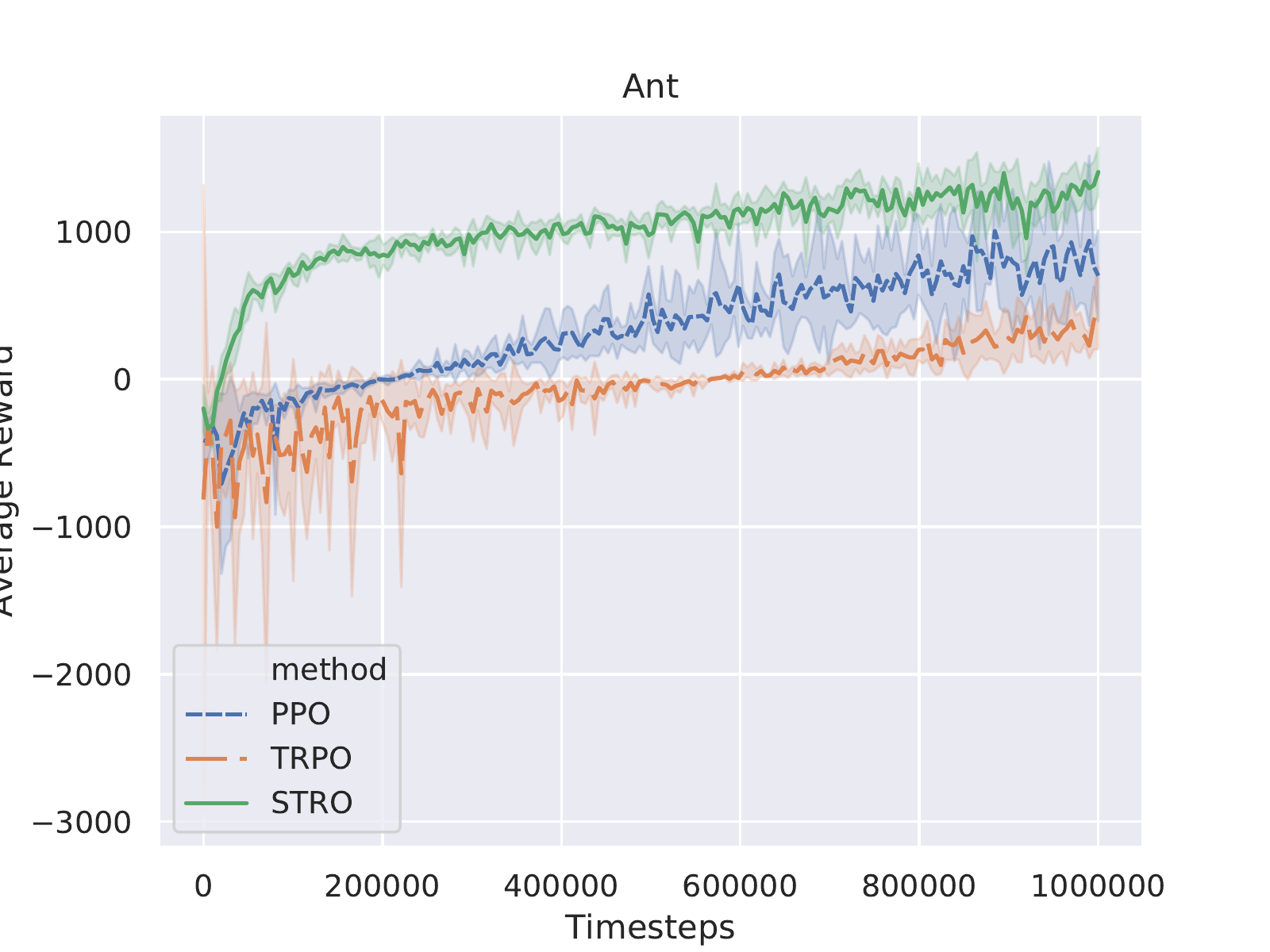}}
		\subfloat[]{\label{fig:compare_9}\includegraphics[width=1.8in]{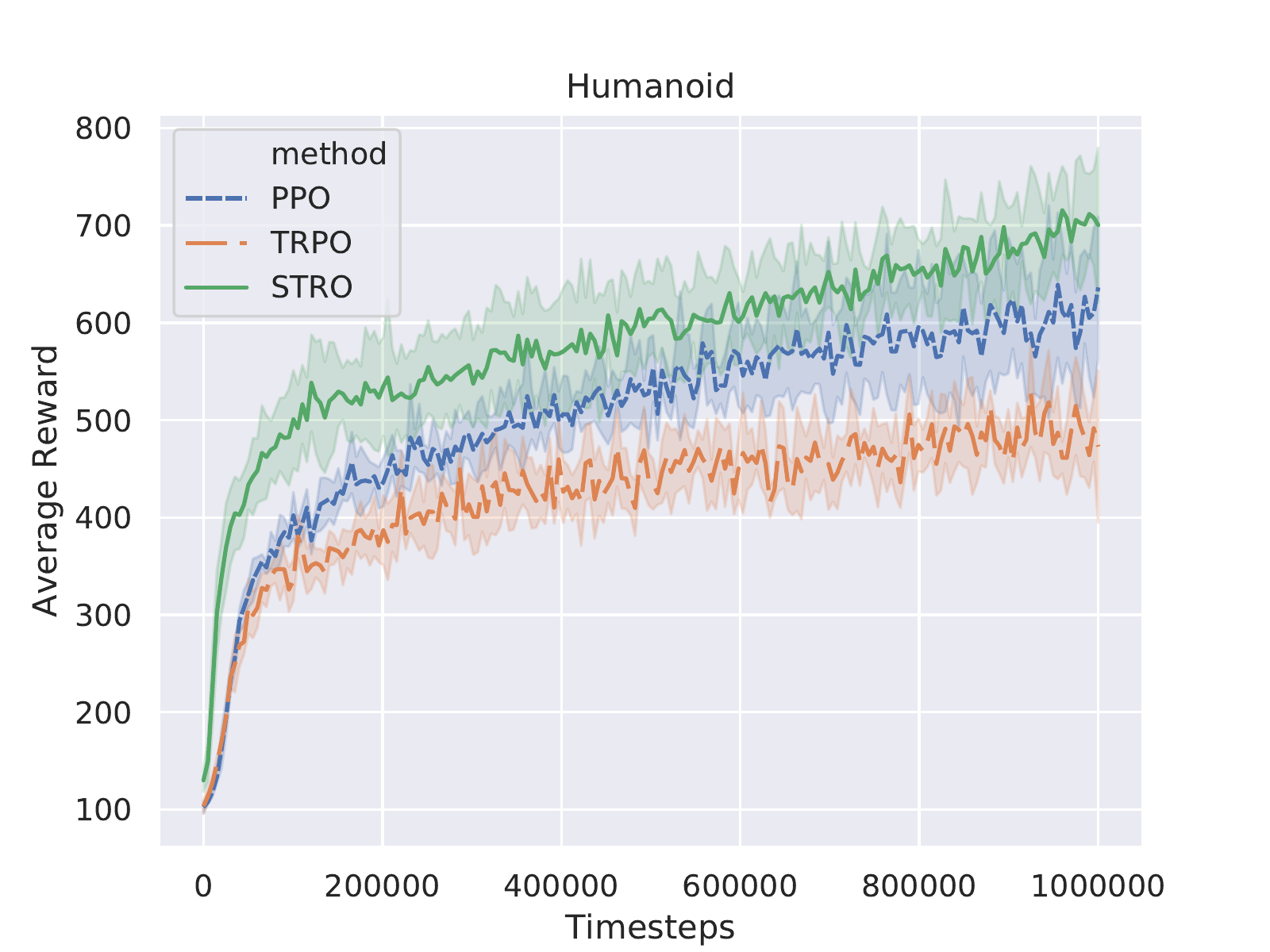}}\\
		\caption{Training curves on Mujoco-v2 continuous control benchmarks under Baselines.}
		\label{fig:compare}
	\end{figure}
\begin{table}
\caption{Max Average Reward $\pm$ standard deviation over 5 trials of 1e6 time steps under Baselines. The last column lists the average running time of these three methods.}
\centering
\begin{tabular}{|c|c|c|c|c|}
\hline
Env & PPO & TRPO & STRO & Time(min)\\
\hline
HalfCheetah & 1816$\pm$798 & 1406$\pm$390 & \textbf{2985}$\pm$491 & 36$\backslash$35$\backslash$54  \\
\hline
Reacher & -5$\pm$1 & -3$\pm$0 & -4$\pm$0 & 34$\backslash$38$\backslash$53  \\
\hline
Swimmer & 94$\pm$18 & 97$\pm$24 & \textbf{117}$\pm$9 & 36$\backslash$43$\backslash$46   \\
\hline
Hopper & 2475$\pm$91 & 3591$\pm$107 & 3463$\pm$81 & 35$\backslash$43$\backslash$48   \\
\hline
Walker2d & 3681$\pm$794 & 3935$\pm$936 & \textbf{5849}$\pm$2063 & 39$\backslash$43$\backslash$51   \\
\hline
InvPend & 1000$\pm$0 & 1000$\pm$0 & \textbf{1000}$\pm$0 & 41$\backslash$33$\backslash$49  \\
\hline
InvDoubPend & 9328$\pm$1 & 9340$\pm$7 & \textbf{9346}$\pm$3 & 43$\backslash$36$\backslash$49 \\
\hline
Ant & 793$\pm$206 & 666$\pm$79 & \textbf{1512}$\pm$101 & 43$\backslash$43$\backslash$46   \\
\hline
Humanoid & 679$\pm$64 & 627$\pm$11 & \textbf{784}$\pm$36 & 47$\backslash$44$\backslash$ 56  \\
\hline
\end{tabular}
\label{maxcompare}
\end{table}

As the recent studies \cite{henderson2018deep,ilyas2018deep} demonstrate that the experimental techniques in the environment wrappers, such as observation normalization, reward clipping, etc., have a dramatic effect on the numerical performance, we make another comparison in Spinningup where no extra modification is applied in the environments and the network structure is shared among all algorithms. From the results in FIG \ref{fig:spinup}, our algorithm still surpass TRPO and PPO on most experiments as in Baselines. Because the implementations are slightly different between Baselines and Spinningup, the values of the average reward in FIG \ref{fig:compare} and FIG \ref{fig:spinup} are different in some tasks, even for the same algorithm. Generally, as illustrated in TABLE \ref{maxcompare}, our method takes around 50 minutes in Baseines for each task over one random seed on average, slightly slower than PPO and TRPO. 
\begin{figure}[htb]
		\centering
		\subfloat[]{\label{fig:compare_10}\includegraphics[width=1.8in]{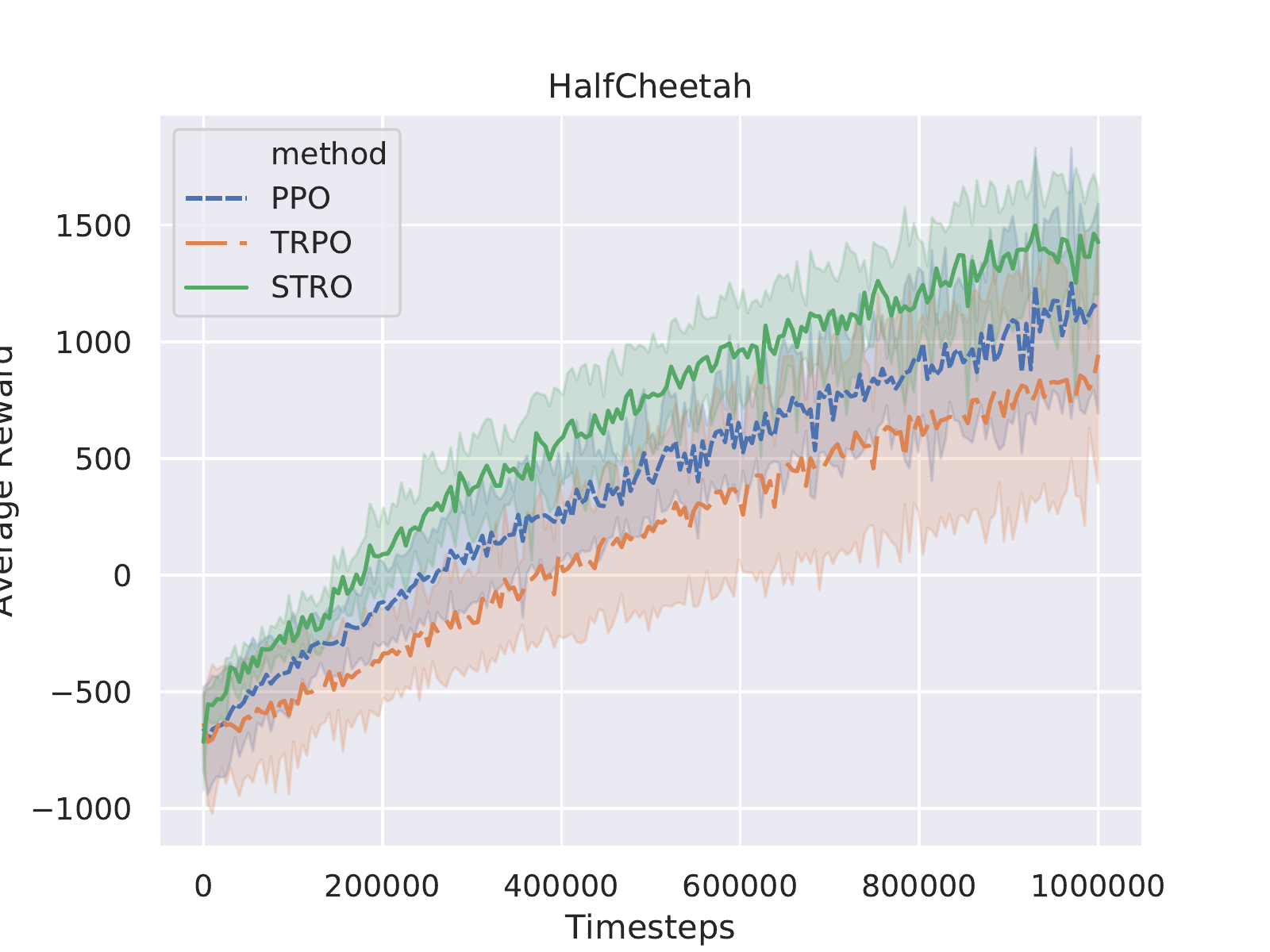}}
		\subfloat[]{\label{fig:compare_20}\includegraphics[width=1.8in]{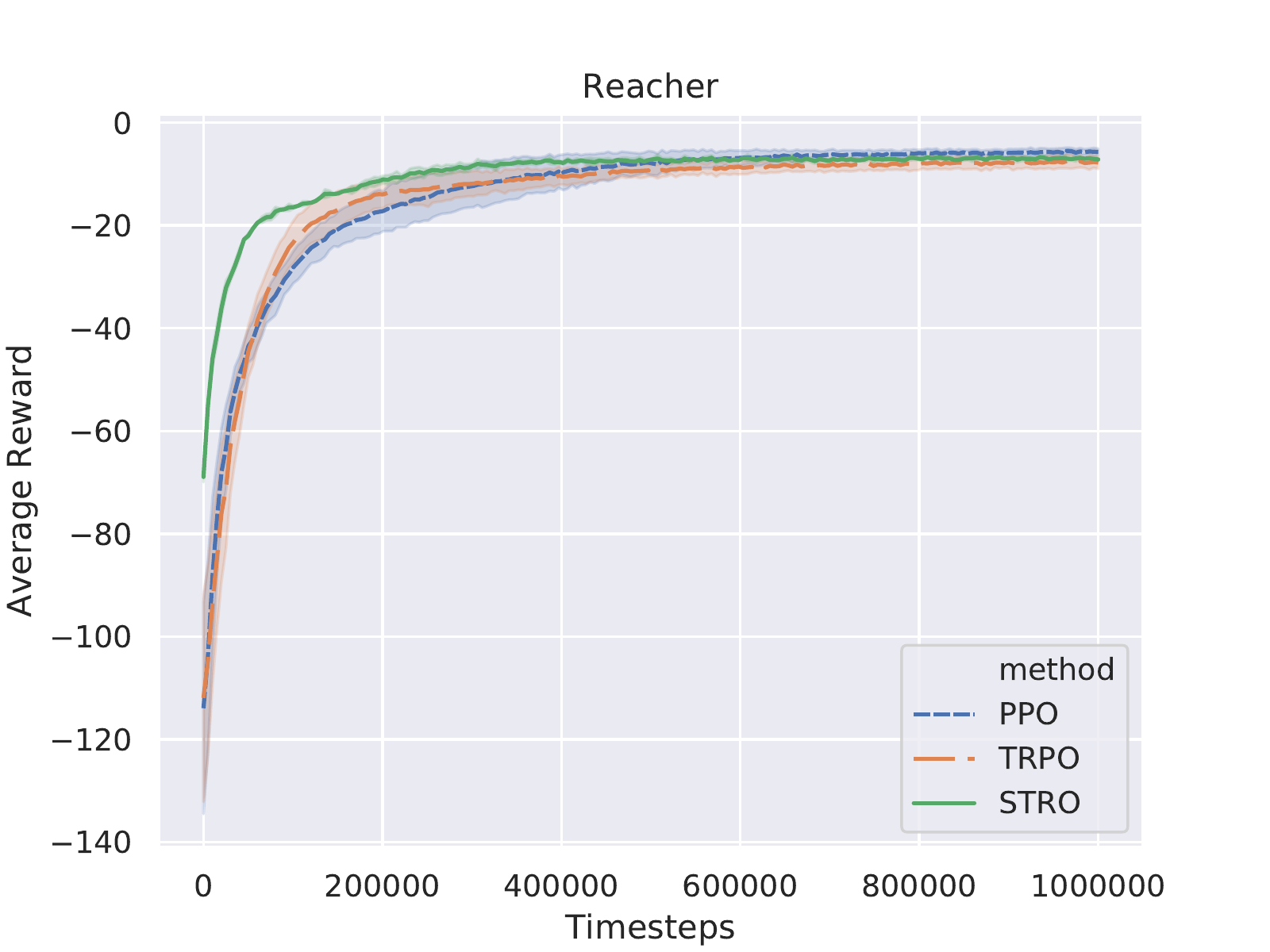}}
		\subfloat[]{\label{fig:compare_30}\includegraphics[width=1.8in]{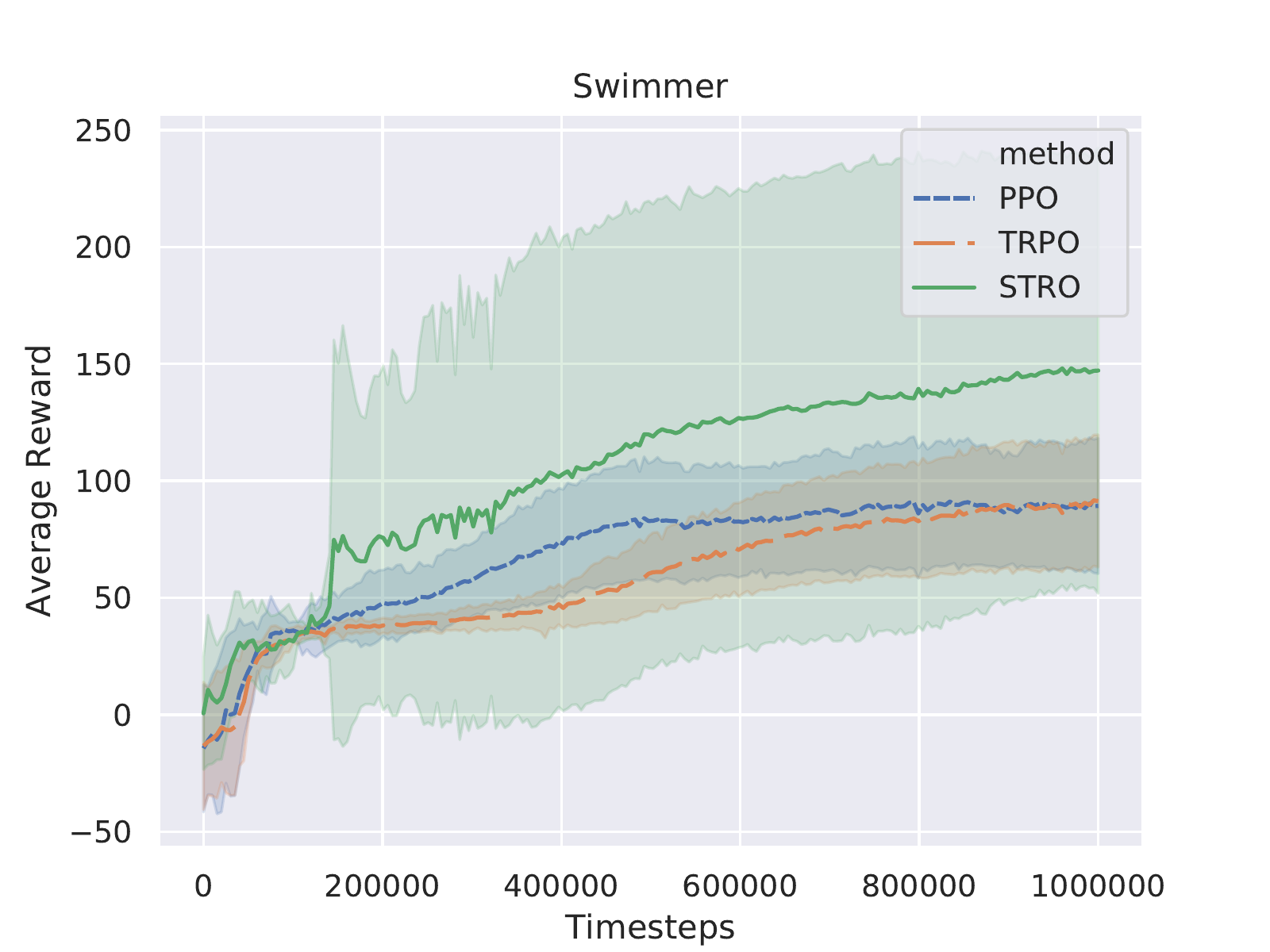}}
		\\
		\subfloat[]{\label{fig:compare_40}\includegraphics[width=1.8in]{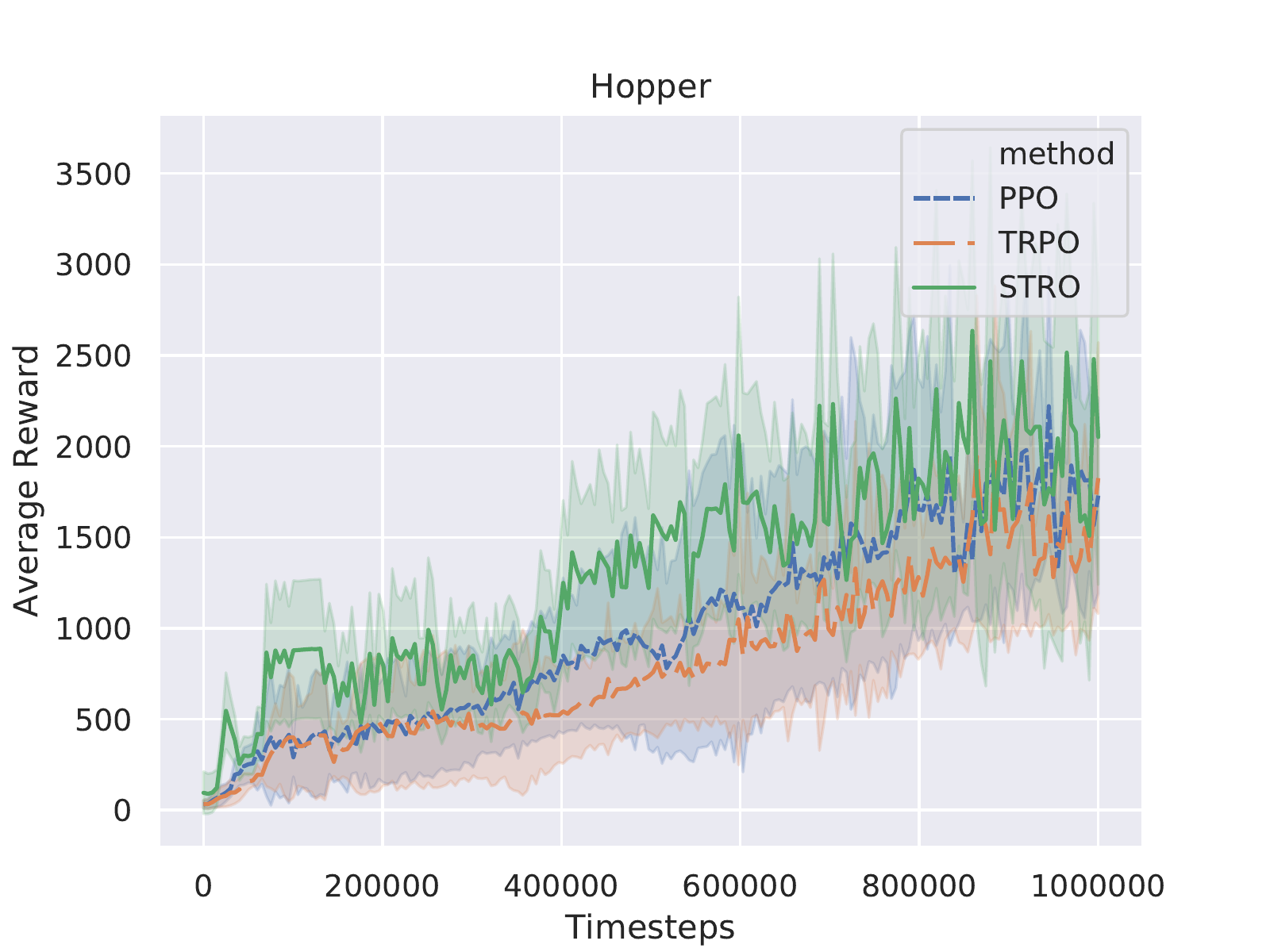}}
		\subfloat[]{\label{fig:compare_50}\includegraphics[width=1.8in]{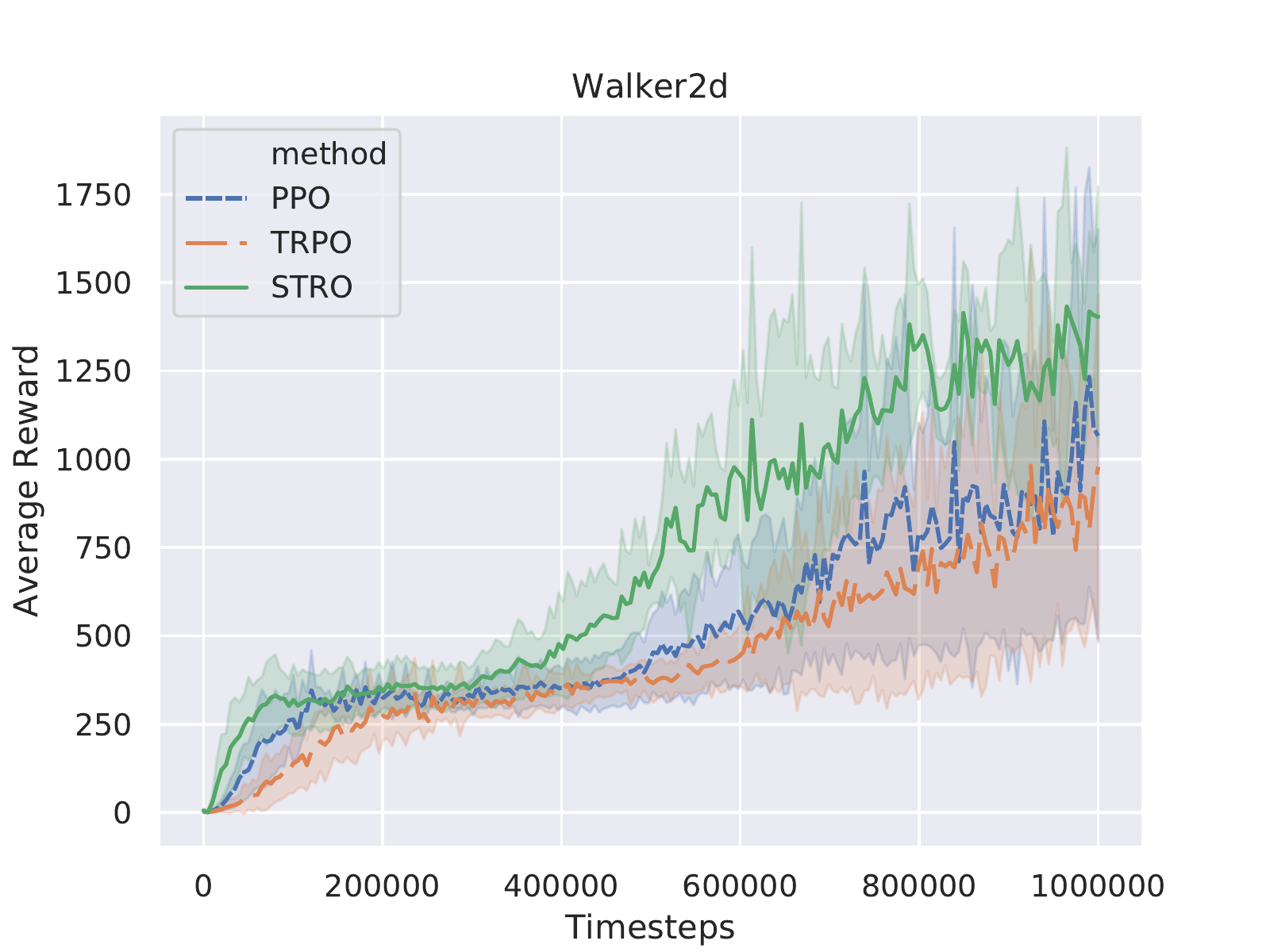}}
		\subfloat[]{\label{fig:compare_60}\includegraphics[width=1.8in]{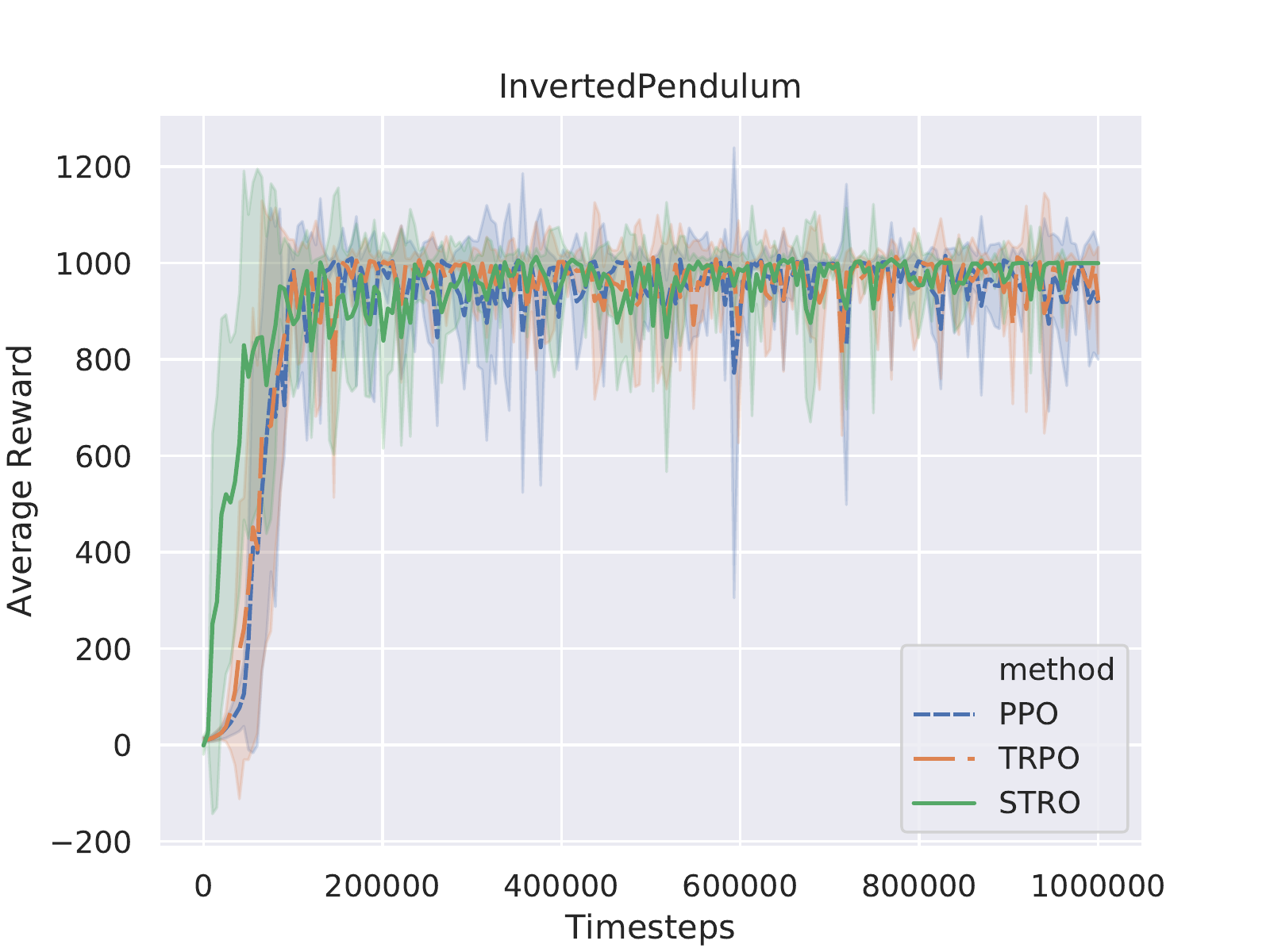}}
		\\
		\subfloat[]{\label{fig:compare_70}\includegraphics[width=1.8in]{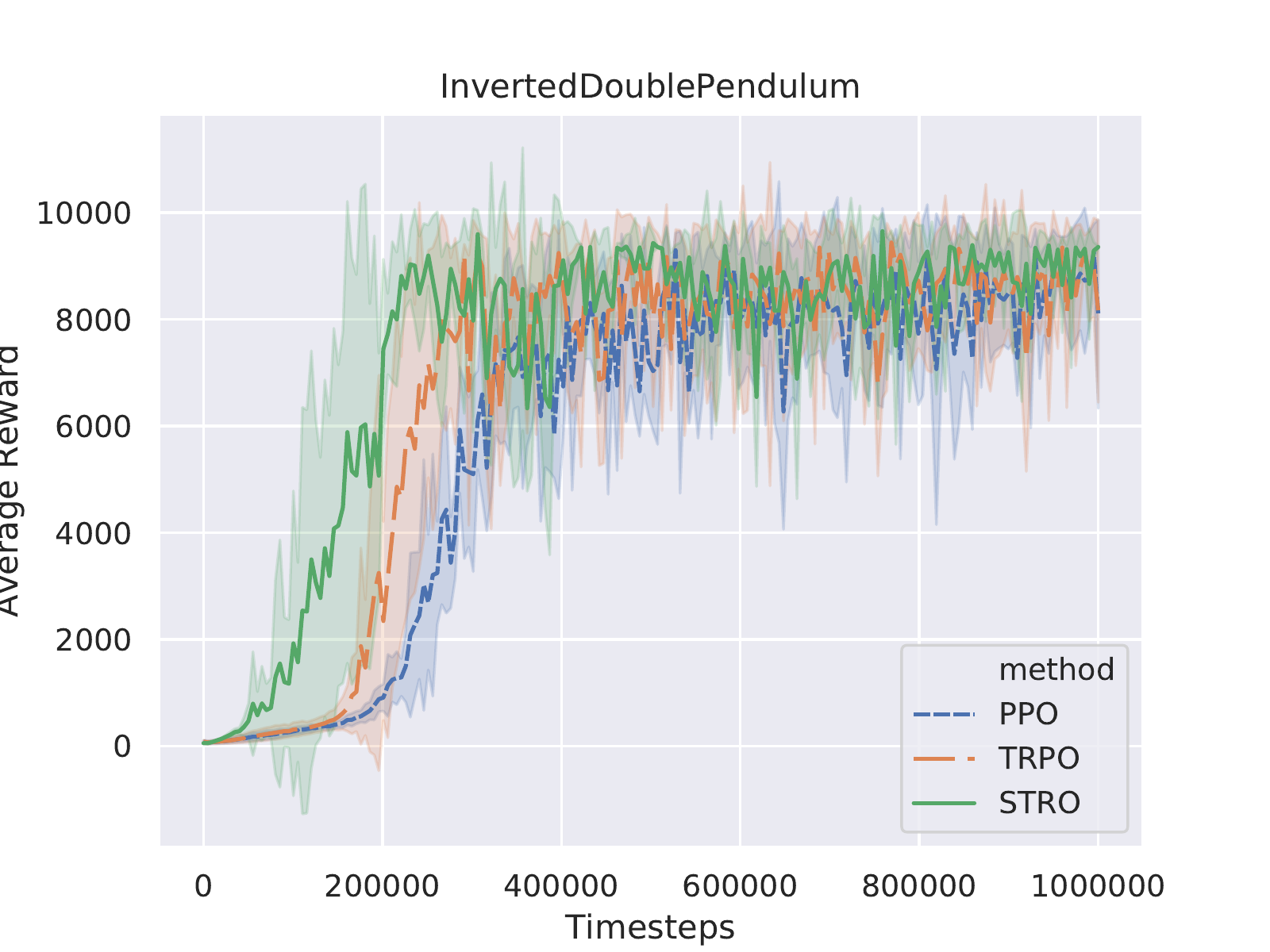}}
		\subfloat[]{\label{fig:compare_80}\includegraphics[width=1.8in]{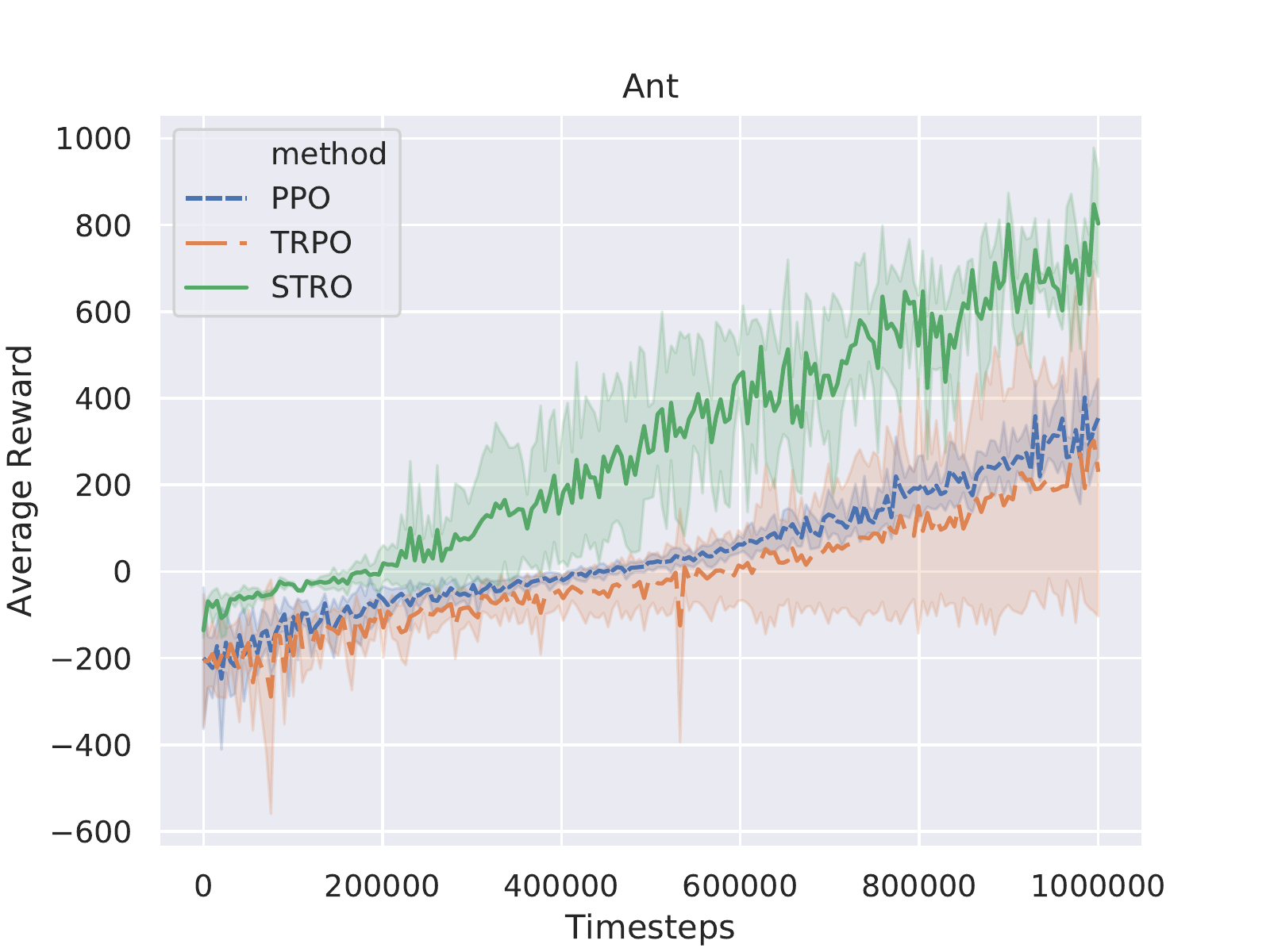}}
		\subfloat[]{\label{fig:compare_90}\includegraphics[width=1.8in]{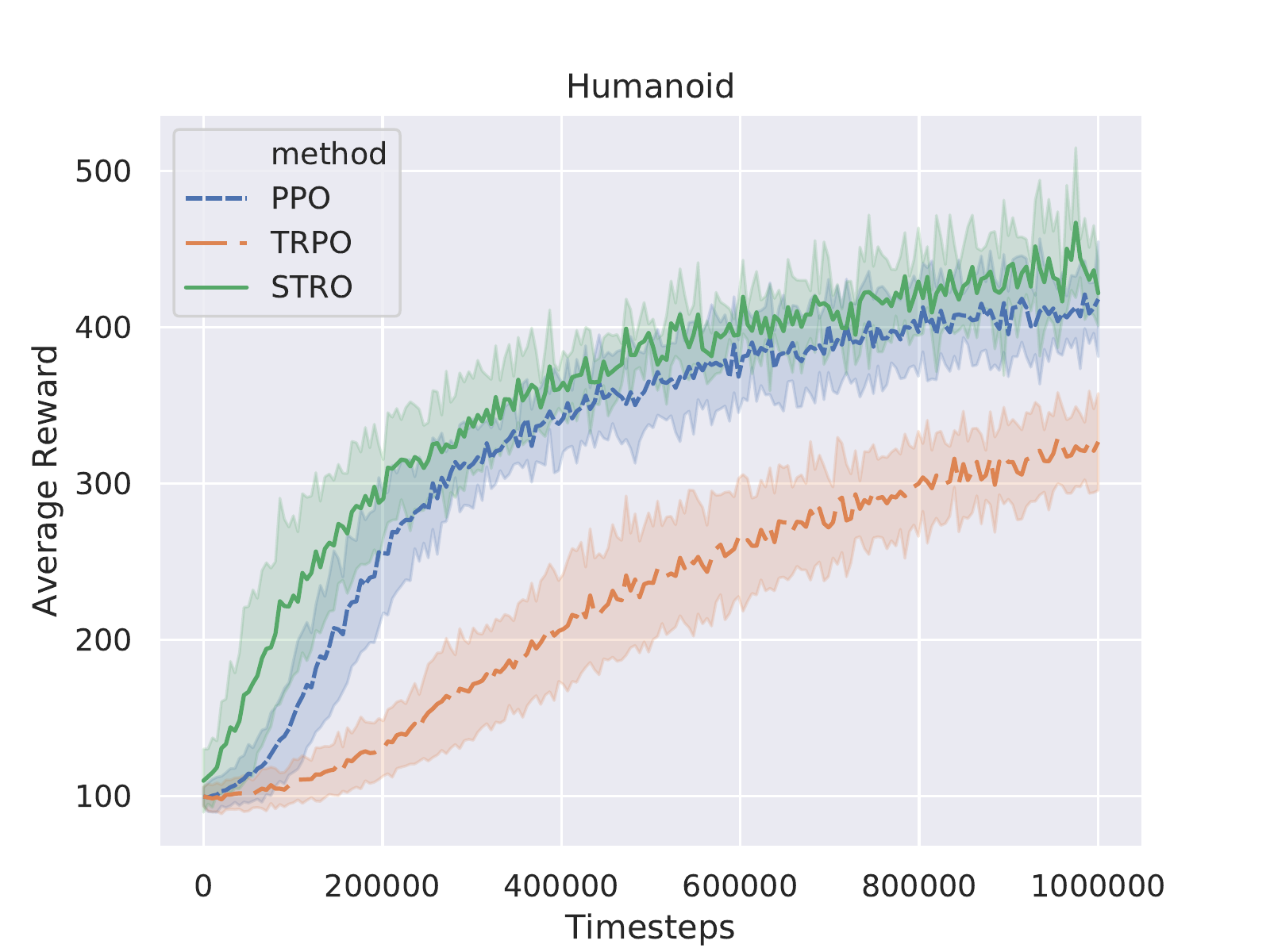}}\\
		\caption{Training curves on Mujoco-v2 continuous control benchmarks with Spinningup.}
	\label{fig:spinup}
	\end{figure}

We now compare with DDPG, TD3 and SAC. For these methods, in addition to the one million time steps for deterministic evaluation, i.e., no noise in sampling, they require another several million interactions for training which may take about several hours, and even longer in complex problems. In other words, the empirical convergence rate of these methods is much slower than that of policy-based methods. Due to the remarkable difference in the computational time among these algorithms, we compare their learning ability within a specified time. Since there is no standard implementation of TD3 and SAC in Baselines as a benchmark, we take the comparisons in Spinningup and terminate the algorithms after one hour training. We test these methods on nine robotic locomotion experiments. Specifically, the numerical results on $Swimmer$, $Ant$ and $Humanoid$ are reported in FIG \ref{fig:mix-1h}. Our method is always better than DDPG on most environments except in $HalfCheetah$. Moreover, in some problems our algorithm can surpass SAC and TD3, and in others it is slightly inferior than the best performed method. The comparisons demonstrate that our method is comparable with state-of-the-art deep reinforcement learning algorithms under such time constraint. Notably, for the other three methods, the simulations are deterministic which has inherent advantages compared to ours, sometimes even significant. 
\begin{figure}[htb]
		\centering
		\subfloat[]{\includegraphics[width=1.8in]{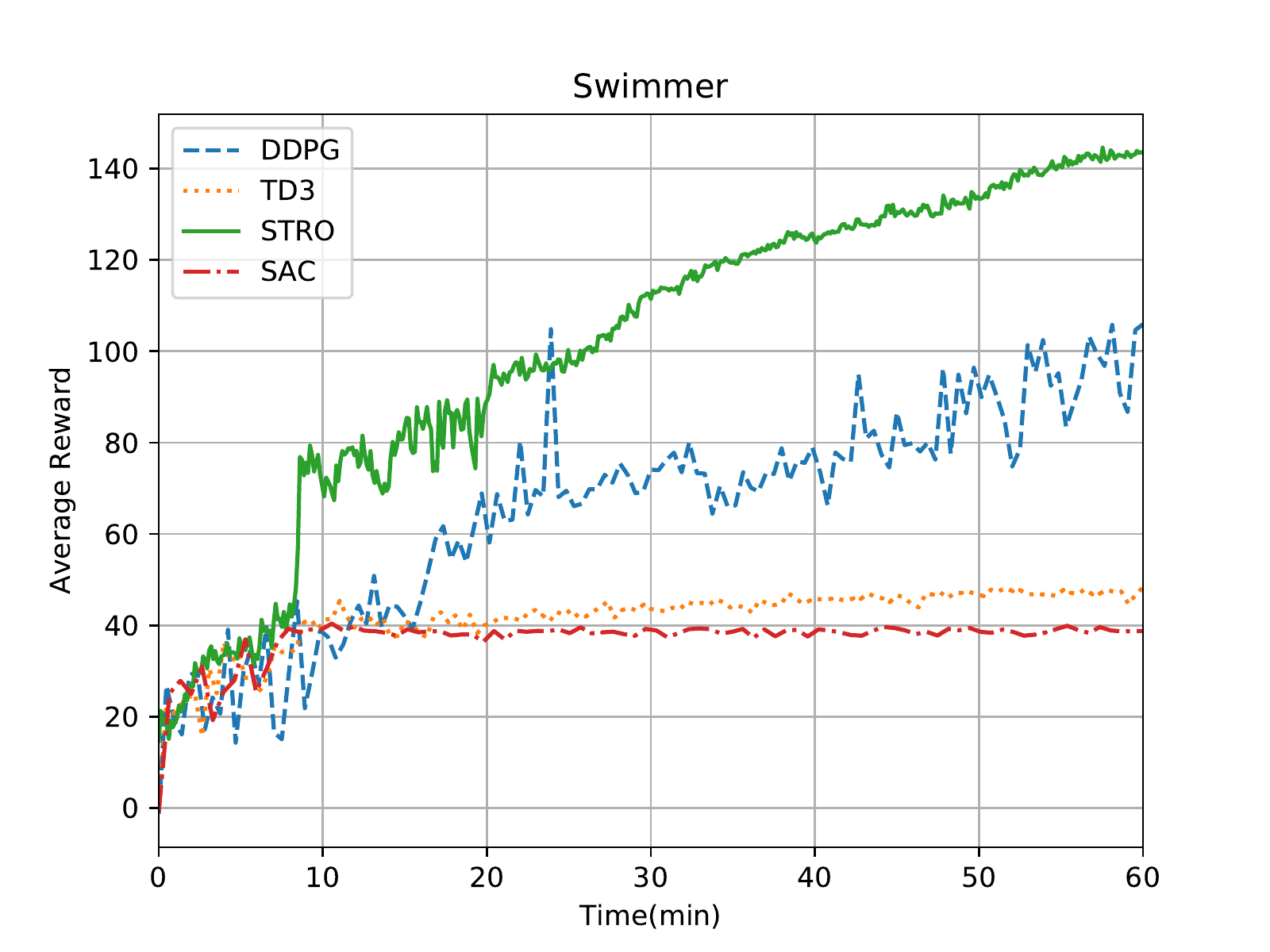}}
		\subfloat[]{\includegraphics[width=1.8in]{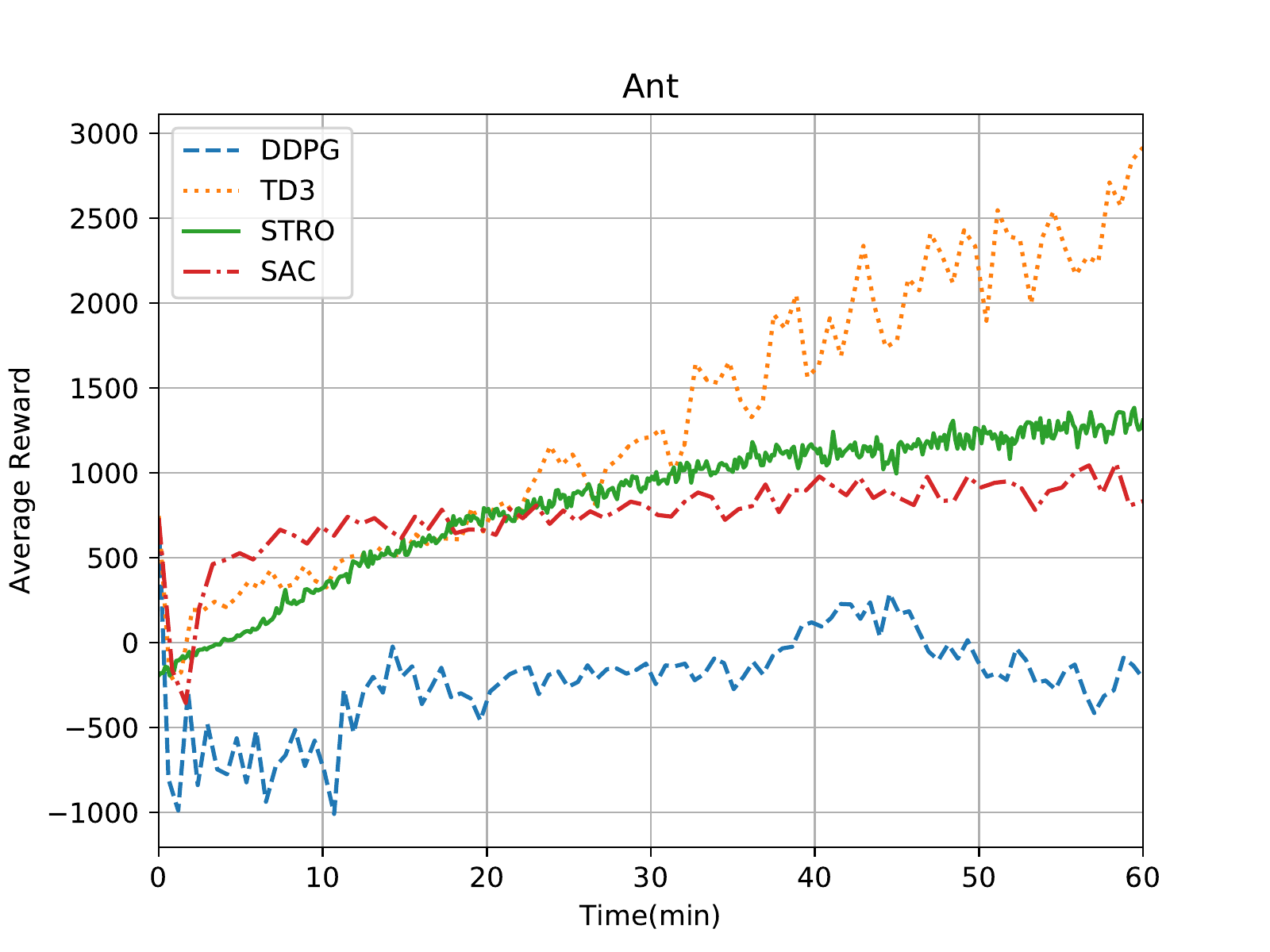}}
		\subfloat[]{\includegraphics[width=1.8in]{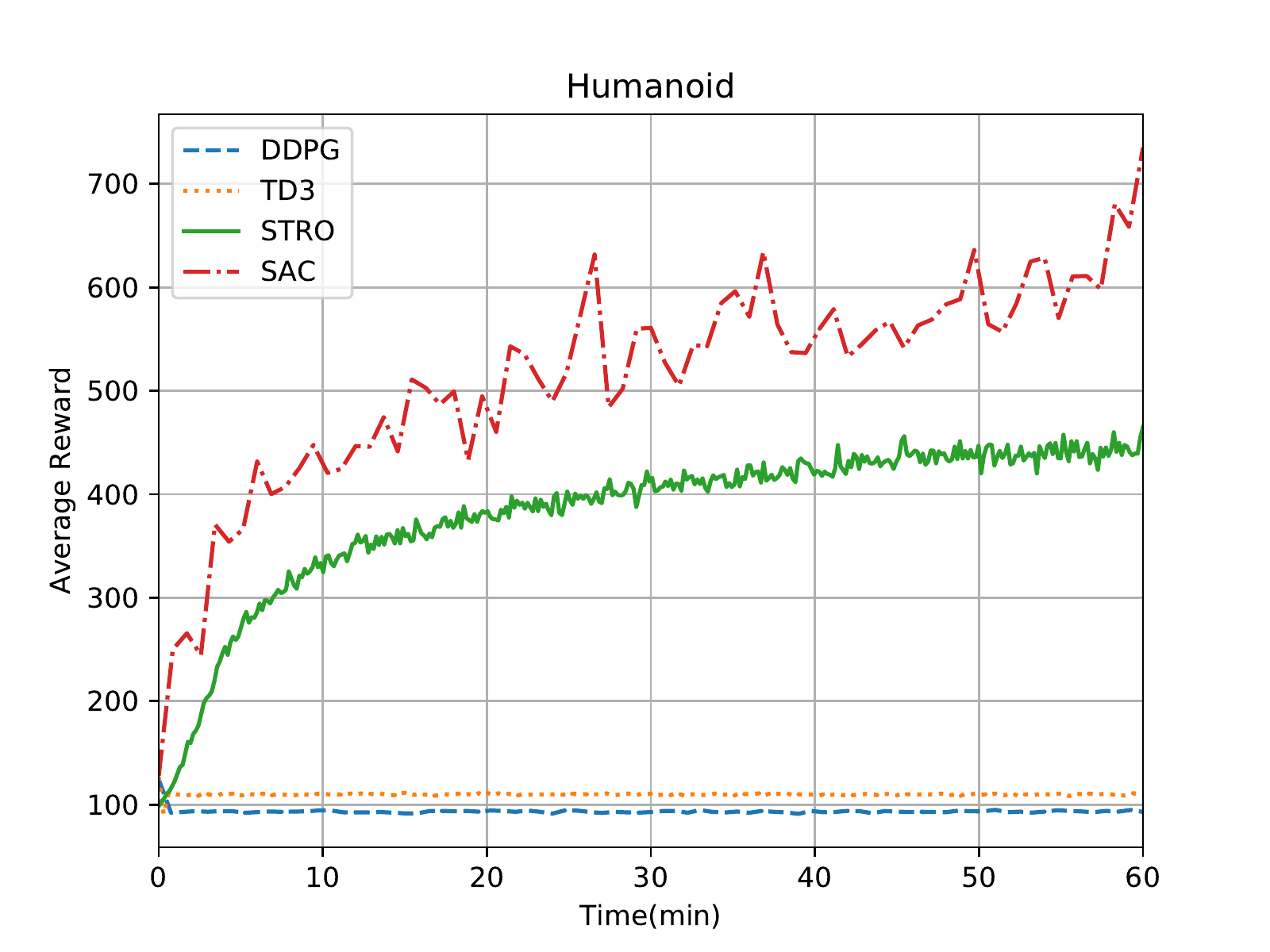}}
		\caption{One hour training curves on MuJoCo-v2 continuous control benchmarks under SpinningUp.}
		\label{fig:mix-1h}
	\end{figure}

\subsection{Discrete Control}
To evaluate our method on discrete problems, we randomly test 9 Atari games which have partially observed states in the form of images and discrete action spaces. The complex observations, high dimensionality, delayed reward and many other challenging elements make them to be extremely tough to learn. Different from the robotic locomotion experiments, the game images are preprocessed and feed into a convolutional neural network with $softmax$ operator in the last layer to represent a categorical policy. The value network has a similar structure as the policy network except the dimension of the output. Our method is implemented in Baselines, and the network architecture is the same as the one in PPO. For all tested games, we take $N=2048$ for each simulation and initialize the trust region coefficient $\mu_0=0.01$. We set $\mu_{\max}=0..5$ and $\mu_{\min}=0.005$. The maximal average return over 100 episodes on several games is reported in TABLE \ref{maxcompare-atari}, and the learning curves are plotted in FIG \ref{fig:atari} to illustrate the generalization of our method. Generally, our algorithm reaches higher or almost the same average reward in all tested environments.

\begin{figure}[htb]
		\centering
		\subfloat[]{\includegraphics[width=1.8in]{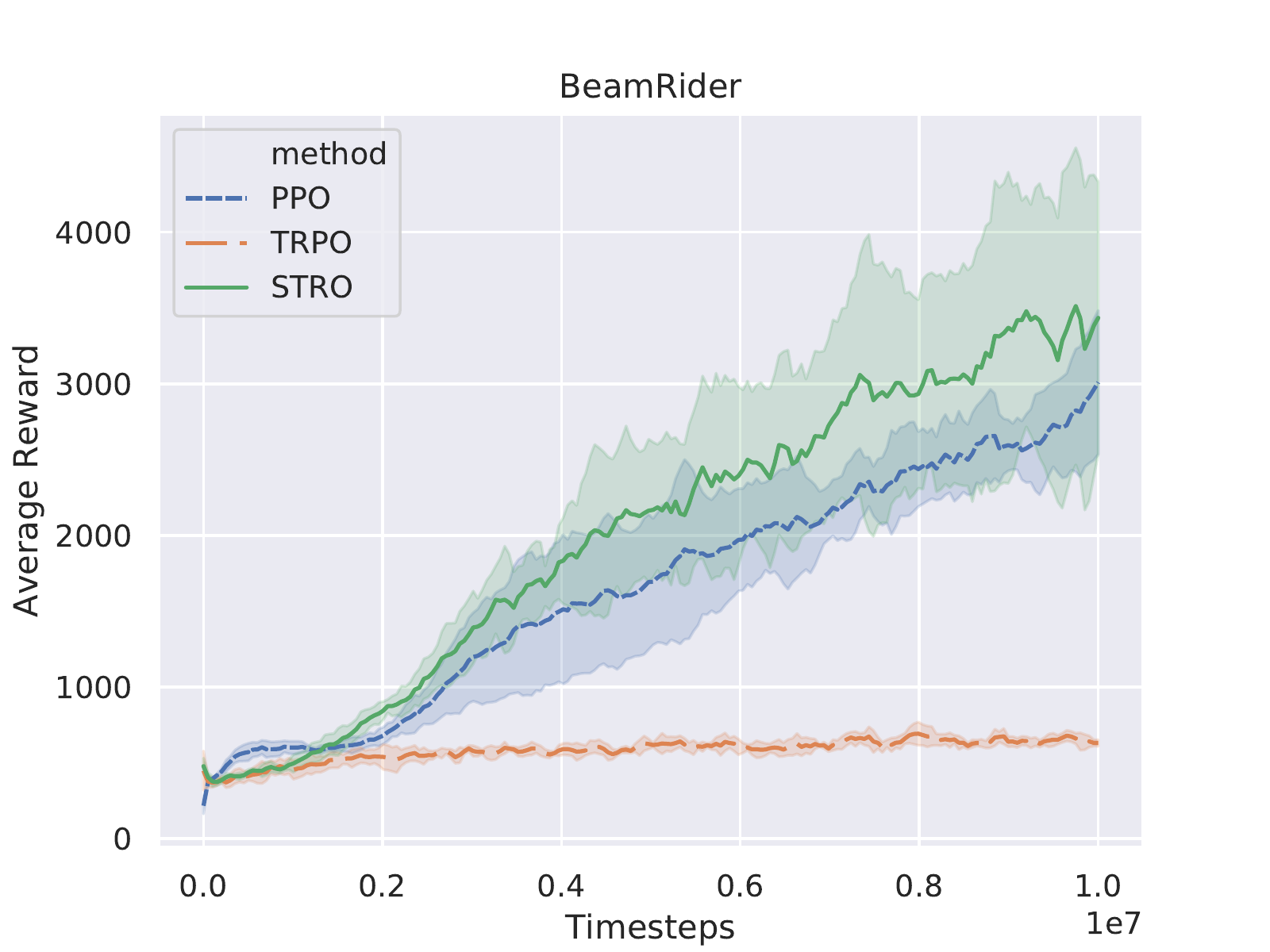}}
		\subfloat[]{\includegraphics[width=1.8in]{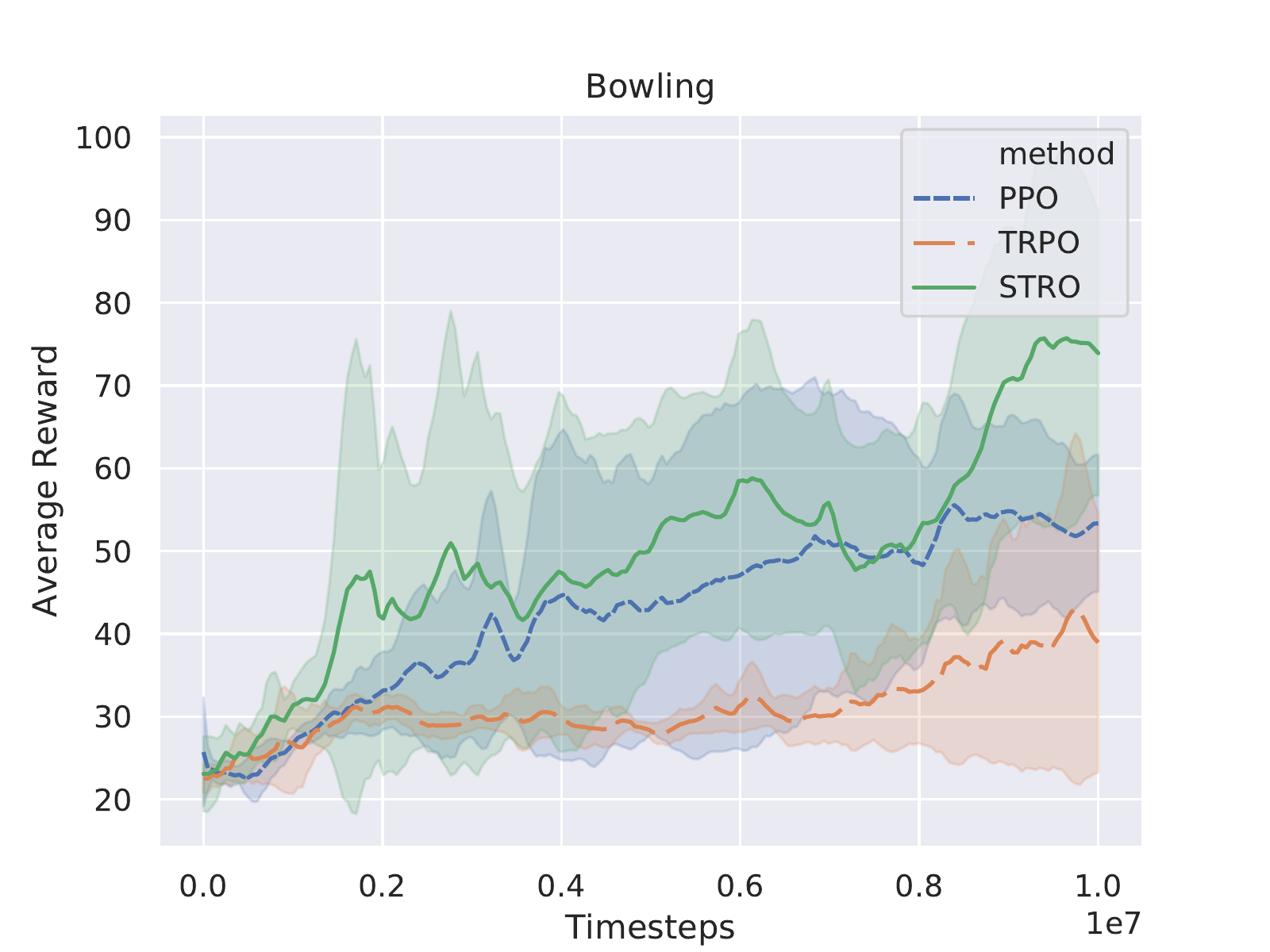}}
		\subfloat[]{\includegraphics[width=1.8in]{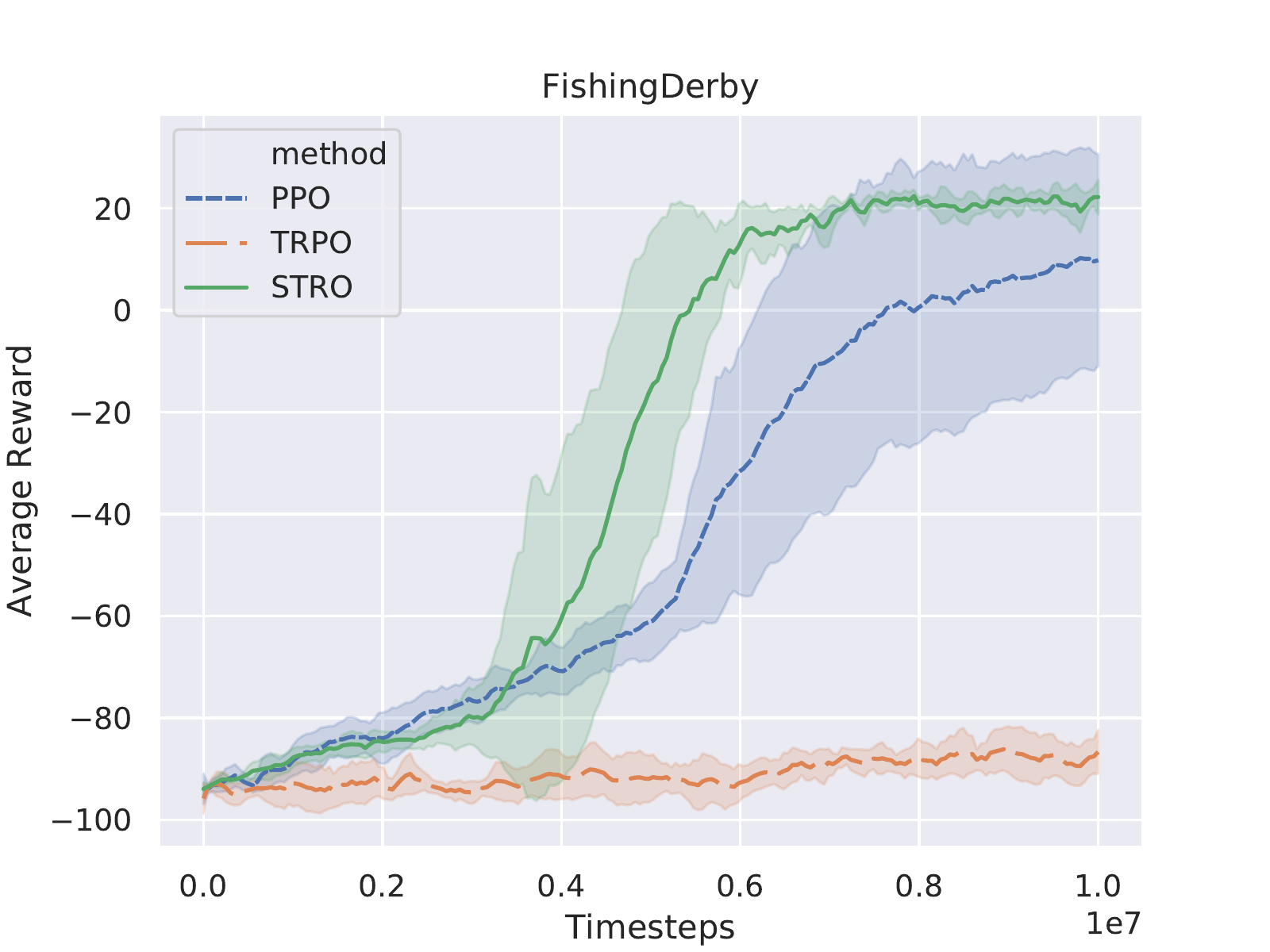}}
		\\
		\subfloat[]{\includegraphics[width=1.8in]{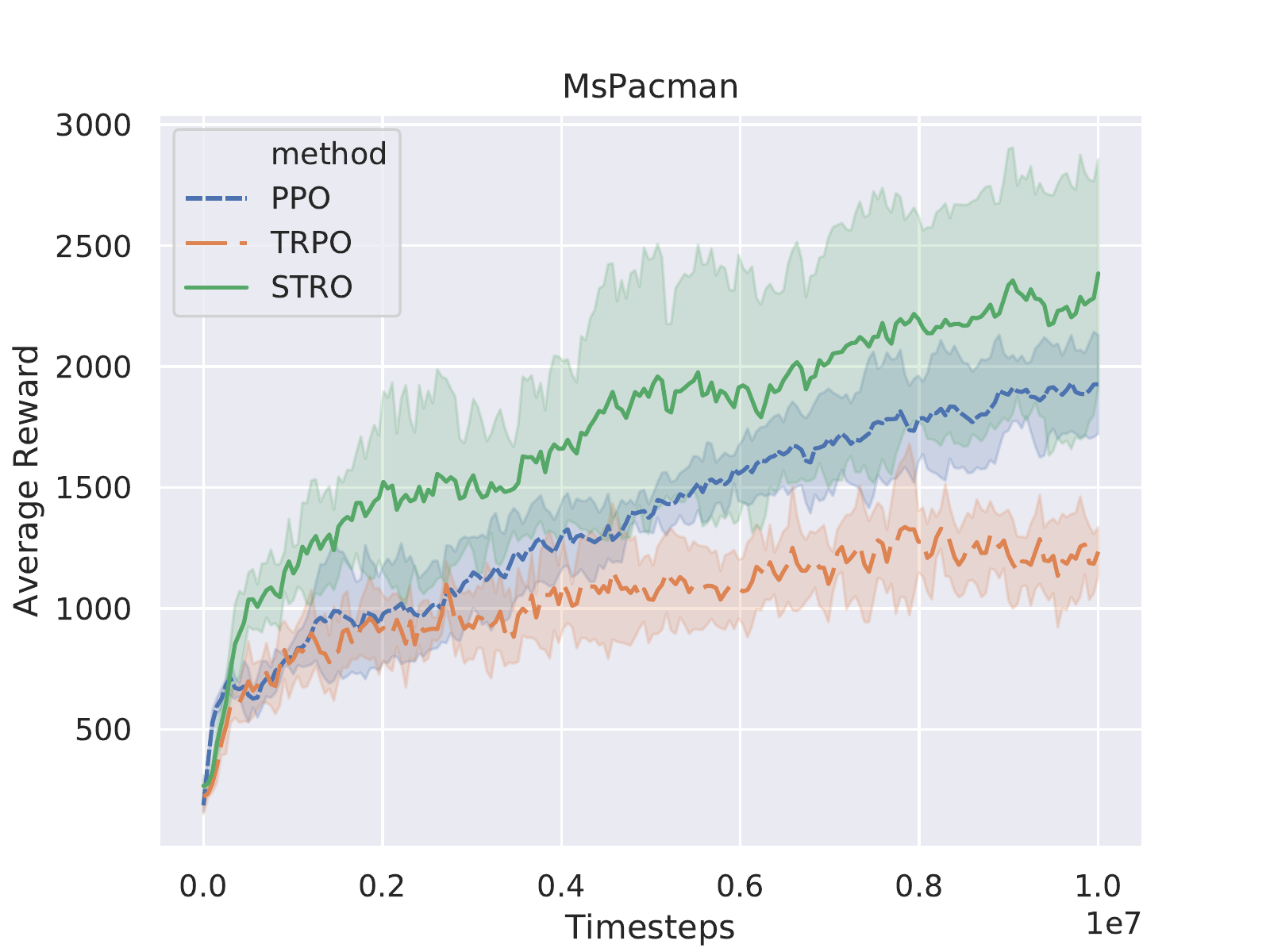}}
		\subfloat[]{\includegraphics[width=1.8in]{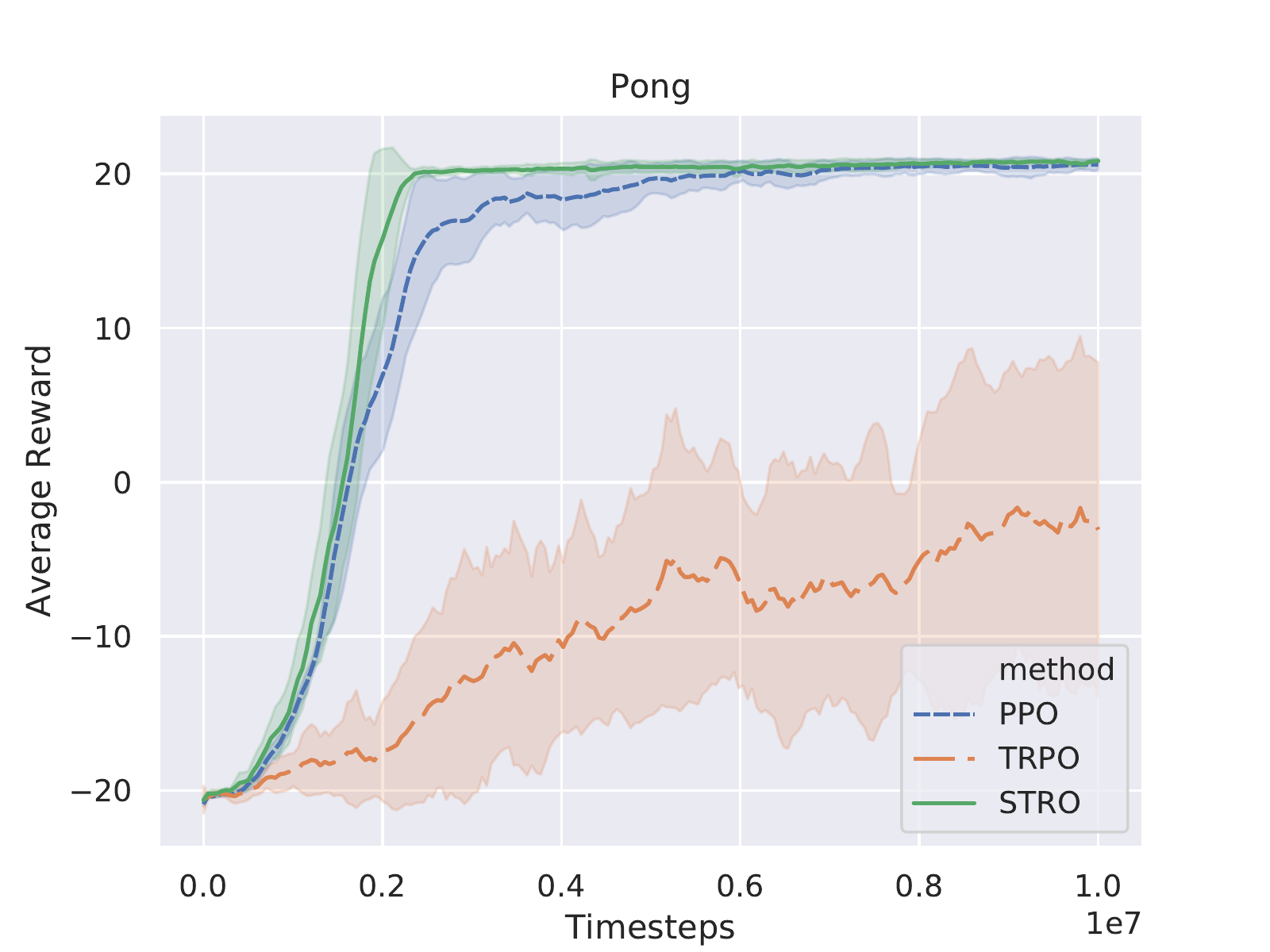}}
		\subfloat[]{\includegraphics[width=1.8in]{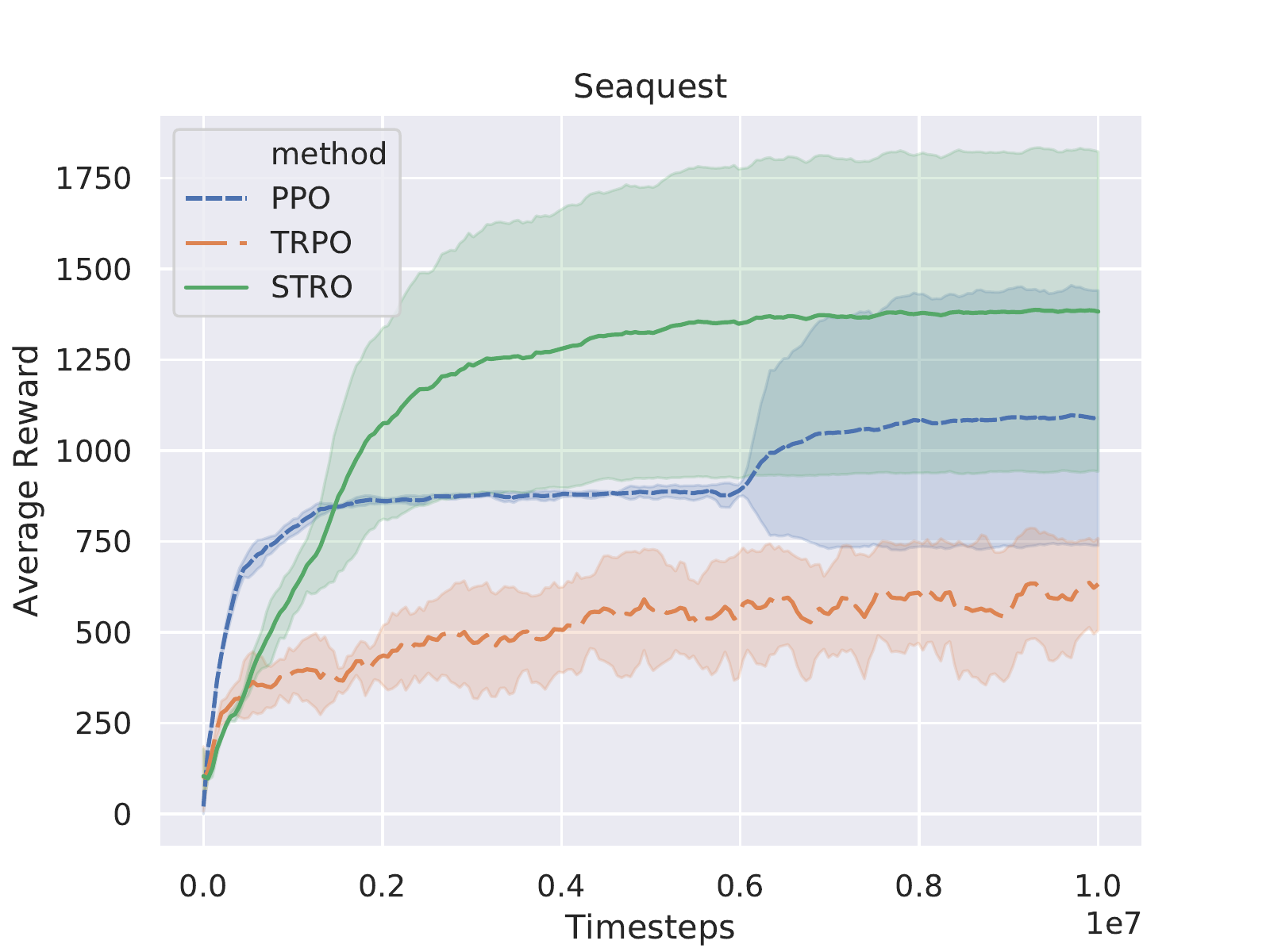}}
		\\
		\subfloat[]{\includegraphics[width=1.8in]{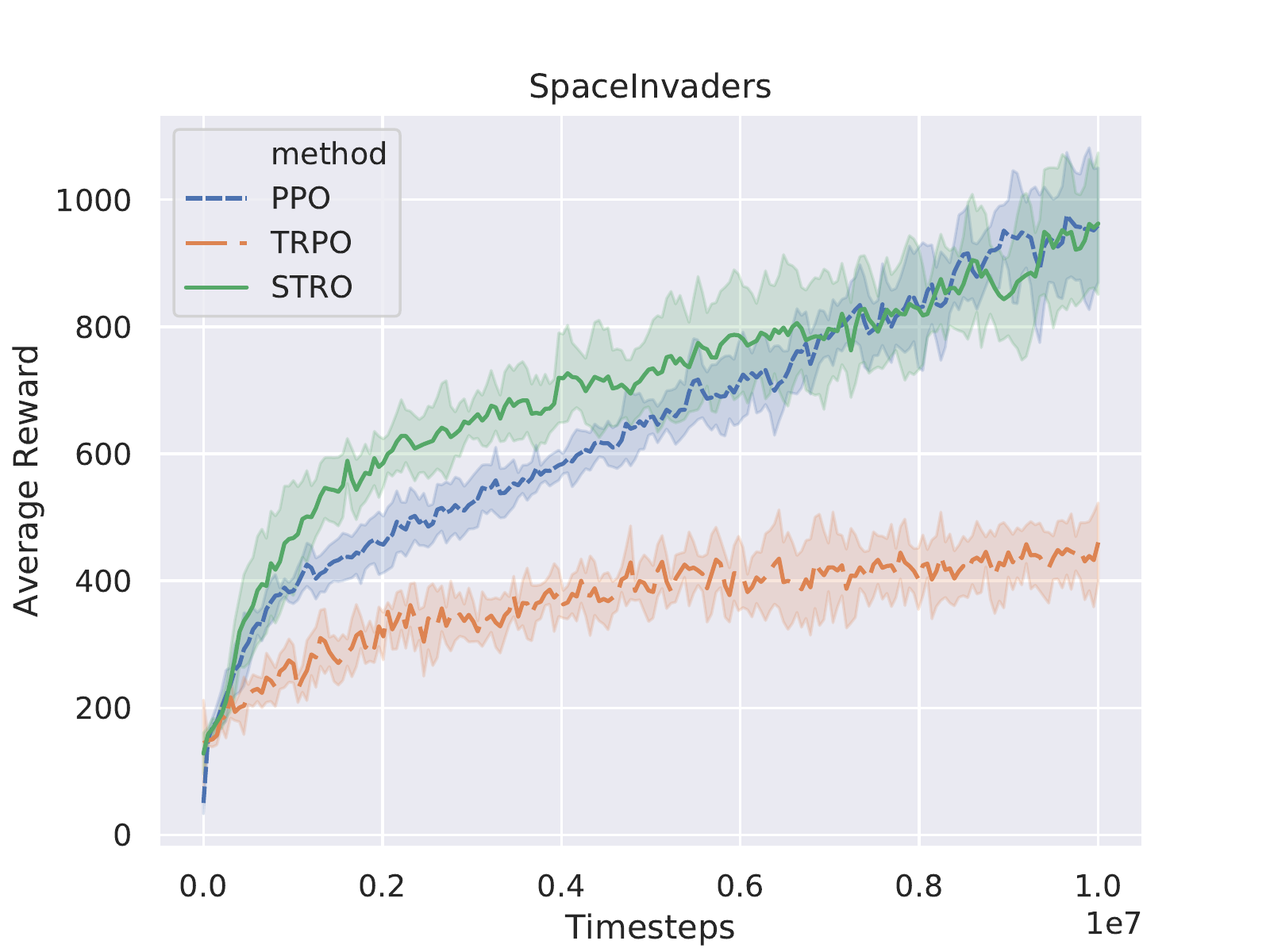}}
		\subfloat[]{\includegraphics[width=1.8in]{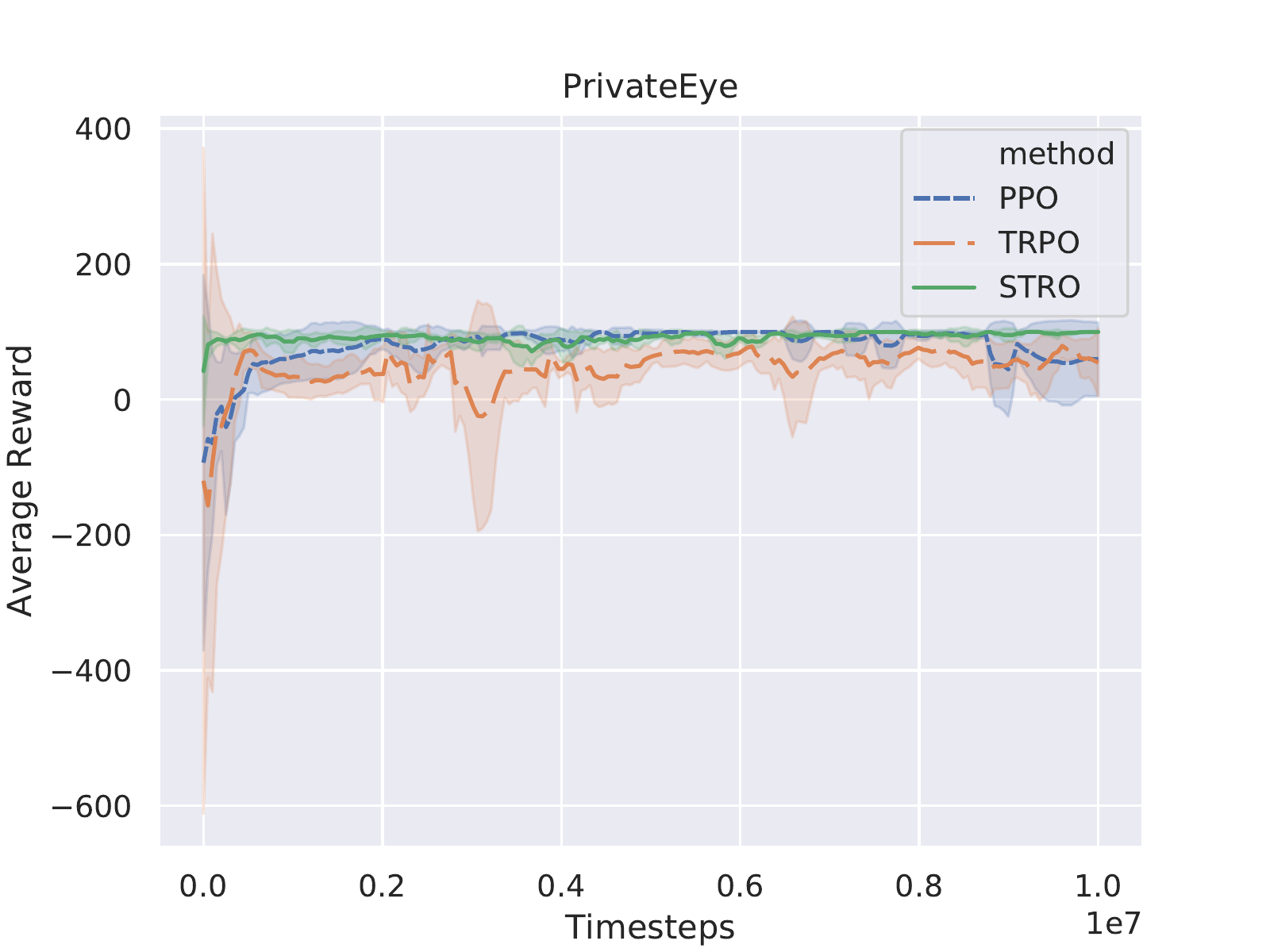}}
		\subfloat[]{\includegraphics[width=1.8in]{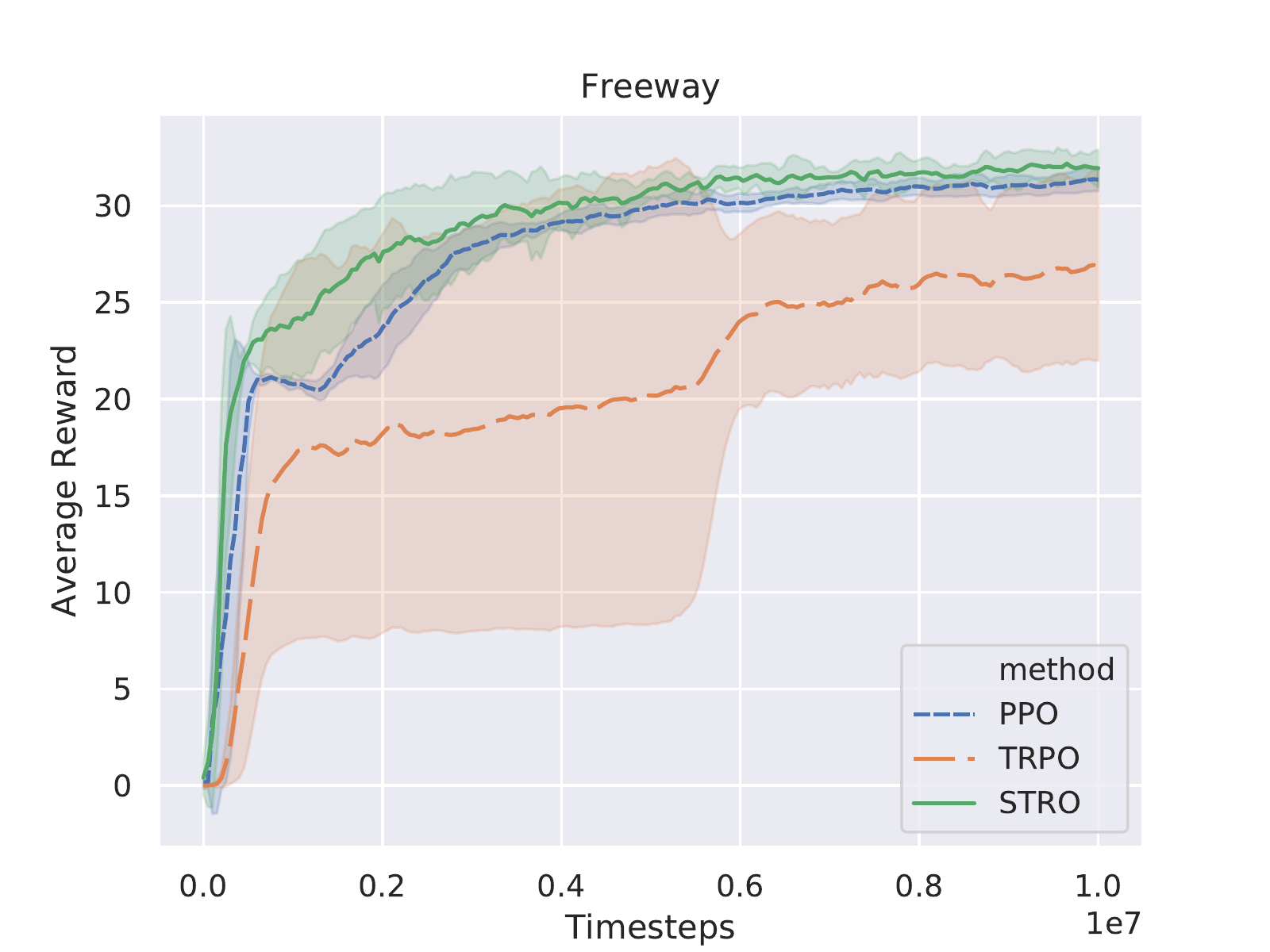}}

		\caption{Training curves on Atari games over 16 parallel programmings.}
		\label{fig:atari}
	\end{figure}

\begin{table}[h]
\caption{Max Average Reward $\pm$ standard deviation over 5 trials of 1e7 time steps.}
\begin{center}
\begin{tabular}{|c|c|c|c|c|}
\hline
Environment & PPO & TRPO  & STRO\\
\hline
BeamRider & 3113$\pm$375 & 760$\pm$30 & \textbf{3836}$\pm$903 \\
\hline
Bowling & 61$\pm$11 & 58$\pm$18 & \textbf{81}$\pm$18 \\
\hline
FishingDerby & 11$\pm$18 & -82$\pm$2 & \textbf{25}$\pm$0 \\
\hline
MsPacman & 2037$\pm$153 & 1538$\pm$159 & \textbf{2558}$\pm$442 \\
\hline
Pong & 19$\pm$0 & 3$\pm$7 & \textbf{20}$\pm$0 \\
\hline
Seaquest & 1100$\pm$317 & 692$\pm$92 & \textbf{1396}$\pm$398 \\
\hline
SpaceInvaders & 965$\pm$108 & 540$\pm$20 & \textbf{1010}$\pm$81 \\
\hline
PrivateEye & 100$\pm$0 &  88$\pm$16 & 100$\pm$0 \\
\hline
Freeway & 30$\pm$0 & 28$\pm$3 & \textbf{31}$\pm$0 \\
\hline
\end{tabular}
\end{center}
\label{maxcompare-atari}
\end{table}

\section{Conclusion}	
We propose a stochastic trust-region framework for policy optimization and a decoupled update for the Gaussian policy to avoid premature convergence. For the unparameterized policies, we prove the global convergence of the proposed algorithm under mild and feasible assumptions. Moreover, in the parameterized case, we show that the mean of the Gaussian policies converges to the stationary point where the covariance of the policies is assumed to converge. In robotic locomotion using a general-purpose policy network, we successfully learn better controllers than PPO and TRPO. Our method is able to surpass DDPG in most tested environments under given time constraints. Meanwhile, we show that our algorithm is comparable with the start-of-the-art deep reinforcement learning methods, such as TD3 and SAC. Our method is also suitable for discrete tasks such as playing some Atari games, and it can outperform PPO and TRPO in quite a few tasks.

\section*{Acknowledgement}
The computational results were obtained at GPUs 
 supported by the National Engineering Laboratory for Big Data Analysis and Applications and the High-performance Computing Platform of Peking University.

\bibliographystyle{siamplain}
\bibliography{interacttfssample}
\newpage

\end{document}